\NeedsTeXFormat{LaTeX2e}
\documentclass[a4paper,11pt,reqno]{amsart}
\usepackage{amssymb}
\usepackage{amsmath}
\usepackage{enumerate}
\setcounter{tocdepth}{1}

\setlength{\textwidth}{15.2cm}           
\setlength{\topmargin}{0cm}            
\setlength{\footskip}{0cm}               
\setlength{\marginparwidth}{0cm}     
\voffset-1cm                                  
\hoffset-1cm                                  

\numberwithin{equation}{section} \numberwithin{figure}{section}

\DeclareMathOperator{\Pic}{Pic} 
 
 \DeclareMathOperator{\rank}{rank}
 \DeclareMathOperator{\re}{Re}
   
\DeclareMathOperator{\vol}{vol} 

\newcommand{\Aone}{{\mathbf A}_1}

\newcommand{\tS}{{\widetilde S}}

\newcommand\xx{\mathbf{x}}
\newcommand\yy{\mathbf{y}}

\newcommand\mm{\mathbf{m}}
\newcommand{\dd}{\mathbf{d}}
\newcommand{\intd}{\mathrm{d}}
\newcommand{\DD}{\mathbf{D}}

\newcommand{\ee}{\mathbf{e}}

\newcommand{\kk}{\mathbf{k}}
\newcommand{\eeta}{\mbox{\boldmath$\eta$}}
\newcommand{\eepsilon}{\mbox{\boldmath$\epsilon$}}
\newcommand{\kkappa}{\mbox{\boldmath$\kappa$}}

\newcommand{\ddelta}{\mbox{\boldmath$\delta$}}
\newcommand{\xxi}{\mbox{\boldmath$\xi$}}
\newcommand{\regA}{\mathcal{A}}
\newcommand{\regR}{\mathcal{R}}

\newcommand\PP{\mathbb{P}}
\newcommand\Pone{{\PP^1}}
\newcommand\Ptwo{{\PP^2}}
\newcommand\ZZ{\mathbb{Z}}
\newcommand\NN{\mathbb{N}}
\newcommand\QQ{\mathbb{Q}}
\newcommand\RR{\mathbb{R}}

\newcommand{\Mob}{M\"{o}bius }

\newtheorem{lemma}{Lemma}
\newtheorem{theorem}[lemma]{Theorem}
\newtheorem{corollary}[lemma]{Corollary}

\numberwithin{lemma}{section}

\title[Manin's Conjecture]{Manin's conjecture for a singular quartic del Pezzo surface}
\author{\sc Daniel Loughran}
\subjclass[2000]{11D45 (primary), 11N37, 14G05  (secondary)}



\begin{document}

\maketitle

\begin{abstract}
    We prove Manin's conjecture for a split singular quartic del Pezzo surface
    with singularity type $2\Aone$ and eight lines. This is achieved by
    equipping the surface with a conic bundle structure. To handle the sum
    over the family of conics, we prove a result of independent interest
    on a certain restricted divisor problem for four binary linear forms.
\end{abstract}

\section{Introduction}
For any projective variety $X \subset \PP^n$ over $\QQ$, we may define the height of a rational point $x \in X(\QQ)$ to be
$H(x)=\max\{|x_0|,\ldots,|x_n|\}$. Here we have choosen a representative $x = (x_0:\cdots:x_n)$ such that $(x_0,\ldots,x_n)$
is a primitive integer vector. A natural object of study in diophantine geometry is the following counting
function
$$N_U(B)=\#\{x \in U(\QQ):H(x) \leq B\},$$
defined for any $U \subset X$ and $B >0$. Manin and his collaborators (see \cite{FMT89} and \cite{BM90}) have formulated a series of conjectures on the asymptotic behaviour of these counting functions as $B \to \infty$. When $X$ is a Fano variety given by its anticanonical embedding, they have conjectured that there exists some $U \subset X$ open and a constant $c_X\neq0$ such that
$$N_U(B)\sim c_XB(\log B)^{\rho-1}$$
where $\rho=\rank \Pic(X)$, at least if the set of rational points on $X$ is Zariski dense. The constant $c_X$
has also received a conjectural adelic interpretation due to Peyre \cite{Pey95}.

There is a programme to try to prove Manin's conjecture for smooth and singular del Pezzo surfaces, the Fano varieties
of dimension two. See \cite{Bro07} or \cite[Table 1.]{DL10} for a reasonably up to date account of the
progress so far. In this paper we study the number of rational points of bounded height on a certain singular del Pezzo surface of degree four, given by the equations
$$S: x_0x_1=x_2^2, \quad x_3x_4=x_2(x_1-x_0),$$
in $\PP^4$. This surface has been chosen since it is a quartic del Pezzo surfaces with singularity type $2\Aone$ and
eight lines. Such surfaces are at the forefront of current methods, as a general philosophy
in the programme is that the milder the singularities, the more difficult Manin's conjecture is to prove.
It is easy to check the singularity type of $S$ -- the only singularities of $S$ are $(0:0:0:1:0)$ and $(0:0:0:0:1)$,
and these are both locally quadratic cones of the form $x_0x_1=x_2^2$.
It contains the following eight lines
\begin{align*}
    &x_2=x_i=x_j=0, \\
    &x_0=x_1,x_0=\pm  x_2, x_j=0,
\end{align*}
for any $i \in \{0,1\}$ and $j \in \{3,4\}$.
To see that there are no other lines, we appeal to the classification of
singular del Pezzo surfaces of degree four \cite[Prop. 5.6]{CT88}. A surface of singularity type $2\Aone$
may contain either eight or nine lines. In the case where it contains nine lines,
one of these lines joins the two singularities, and it is easy to
check that this is not the case here. Since each line is defined over $\QQ$,
we see that $S$ is a split singular quartic del Pezzo surface with singularity
type $2\Aone$ and eight lines. Note that a point $x \in S$ lies on a line if and only if
$x_0x_1x_2x_3x_4=0$.
Our result is as follows.
\begin{theorem} \label{thm:dp4asym}
    Let $U \subset S$ be the open subset of $S$ formed by removing all the lines.
    Then we have
    $$N_U(B) = c_{S}B(\log B)^5(1 + o(1))$$
    as $B \to \infty$, where $c_{S}$ is the leading constant as predicted by Peyre.
\end{theorem}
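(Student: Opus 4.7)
The plan is to exploit a conic bundle structure on $S$. Parametrizing the quadric cone $x_0 x_1 = x_2^2$ by $[x_0 : x_2 : x_1] = [u^2 : uv : v^2]$ yields a rational map $\pi: S \dashrightarrow \PP^1$ whose fiber over $[u:v]$ is the conic
\[
    x_3 x_4 = \lambda^2\, uv(v-u)(v+u),
\]
embedded in $S$ via $(x_0,\ldots,x_4) = (\lambda u^2, \lambda v^2, \lambda uv, x_3, x_4)$. The four degenerate fibers, where $uv(v-u)(v+u) = 0$, each split into two lines and together account for all eight lines of $S$; so $U$ is exactly the complement of the singular locus of this conic bundle.

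The first step is to translate $N_U(B)$ into a sum over integer quintuples. A primitive integer representative of a point of $U(\QQ)$ corresponds, after a finite book-keeping of signs and gcds, to a coprime pair $(u,v) \in \ZZ^2$ with $uv(v-u)(v+u) \neq 0$, a nonzero $\lambda \in \ZZ$, and integers $x_3, x_4$ satisfying $x_3 x_4 = \lambda^2 uv(v^2 - u^2)$ together with explicit coprimality conditions that guarantee primitivity of the quintuple $(\lambda u^2, \lambda v^2, \lambda uv, x_3, x_4)$. The height condition becomes $\max(|\lambda|\max(|u|,|v|)^2,\, |x_3|,\, |x_4|) \leq B$.

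For fixed $(u,v,\lambda)$ the inner count is a restricted divisor problem: the number of admissible pairs $(x_3,x_4)$ is controlled by a weighted divisor function $\tau_r$ of $\lambda^2 uv(v^2-u^2)$, where $\tau_r$ absorbs the truncation and the \Mob inversion attached to the coprimality conditions. Summing over $\lambda$ in its available range and reorganising leaves us with sums of the shape
\[
    \sum_{(u,v) \in \regR} \tau_r\bigl(uv(v-u)(v+u)\bigr)
\]
over suitable regions $\regR \subset \ZZ^2$, weighted by smooth archimedean factors. The principal obstacle is precisely the evaluation of this divisor sum for a product of \emph{four} binary linear forms: analogues for one, two, or three forms are classical, but the four-form case lies at the edge of current technology and is the result of independent interest advertised in the abstract. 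Producing an error term strong enough to survive the outer sum over $\lambda$, and to yield the correct power $(\log B)^5$ with the right leading constant, is the heart of the proof. The natural tools are a carefully executed hyperbola method on the inner divisor equation, together with uniform lattice-point estimates in regions parametrised by the four linear forms, plus a clean treatment of the local densities at primes dividing the discriminant of their product.

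Once the four-form divisor asymptotic is in hand, extracting the main term is essentially bookkeeping: substitute back, combine with the sums over $\lambda$ and the \Mob weights, and rewrite the resulting constant as a product of local factors times a real integral. The final step is to match this against Peyre's prediction. This will require an explicit description of the effective cone and Picard lattice of the minimal desingularisation $\tS$ (whose rank is $6$, consistent with the $(\log B)^5$), identification of the analytic Euler factors with the non-archimedean Tamagawa densities on $\tS$, and recognition of the real integral as the archimedean volume. This comparison is now standard for del Pezzo surfaces of this shape, so no new ideas should be needed beyond careful computation.
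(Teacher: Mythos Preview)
Your overall architecture matches the paper: exploit the conic bundle, reduce to a restricted divisor problem for the four linear forms $u,v,v-u,v+u$, then verify Peyre's constant. Two points need correction.

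First, the passage ``summing over $\lambda$ in its available range and reorganising leaves us with sums of the shape $\sum \tau_r(uv(v-u)(v+u))$'' hides the non-trivial step. For fixed $(u,v,\lambda)$ you are counting divisors of $\lambda^2 N$ with $N=uv(v^2-u^2)$; summing a restricted $\tau(\lambda^2 N)$ over $\lambda$ does not collapse to a divisor function of $N$ in any direct way, and the error term you would need uniform in $\lambda$ is exactly what is hard. The paper avoids this entirely by parametrising each conic $C_{a,b}:xy=ab(b^2-a^2)z^2$ by $\PP^1$ via $(y_1:y_2)\mapsto(aby_1^2:(b^2-a^2)y_2^2:y_1y_2)$, so that the count on each fibre becomes a single two-dimensional lattice-point problem in $(y_1,y_2)$. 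Your variable $\lambda$ is then $z=y_1y_2$ and never appears as an independent summation variable. The fourfold divisor structure emerges afterwards from the gcd conditions $(y_1,b^2-a^2)$ and $(y_2,ab)$: \Mob inversion on these produces sums over $\lambda_1\mid b^2-a^2$ and $\lambda_2\mid ab$, hence over divisors of the four linear forms. If you made your ``reorganising'' precise you would rediscover this parametrisation; it is cleaner to put it first.

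Second, the restriction carried by your $\tau_r$ is more than height truncation and \Mob weights. In the paper it is a two-sided constraint on the ratio of divisors, condition~(\ref{div:restriction1}), imposed precisely so that the lattice-point error in $(y_1,y_2)$ is summable over the base; without it Lemma~\ref{lem:f} fails. This is why one must prove a genuinely restricted four-form divisor theorem (Theorem~\ref{thm:resdivisor} and Corollary~\ref{cor:resdivisor}) rather than the unrestricted one, and the same restriction is what eventually yields the polytope volume $\alpha(\tS)$ in Lemma~\ref{lem:mainterm2}. So the restriction is not an artefact to be absorbed into a weight but a structural feature tied both to the analytic error control and to the geometry of the effective cone; treat it as part of the main term, not the bookkeeping.
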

Note that we remove the lines since each line contributes roughly $B^2$ points
to the counting problem, obscuring the finer arithmetic of the surface.
An explicit expression for the leading constant can be found in Section \ref{sec:constant}.
The proof of the theorem is achieved by utilising a conic bundle structure on $S$.
This method was also used in \cite{BB08}, however it is in contrast
to many of the proofs of Manin's conjecture for other quartic del Pezzo surfaces, which have used
the associated universal torsor, see e.g. \cite{BB07}. When the singularity type of the surface in question is not
so mild, the universal torsor is often an open subset of a hypersurface in affine space. However the universal torsor for $S$
has many more equations, so the previous methods used for dealing with such surfaces would be harder to implement here.
The conic bundle structure on $S$ allows us to transform the problem of counting rational points
on $S$ to one of counting rational points on a family of conics, essentially given by
\begin{equation}
    xy=ab(b^2-a^2)z^2, \label{eqn:conic}
\end{equation}
for varying parameters $a$ and $b$. Counting the rational points of bounded height on any one individual conic is relatively simple,
the difficultly arises when we sum over all the conics in the family. To handle this sum we prove an
auxiliary result of independent interest in analytic number theory.
It concerns the asymptotic behaviour of a certain restricted divisor problem for four binary linear forms.
We postpone a precise statement of our result since it is of a technical nature, however a simple corollary is that
\begin{align*}
    &\sum_{ \xx \in \ZZ^2 \cap X\regR}\tau(L_1(\xx))\tau(L_2(\xx))\tau(L_3(\xx))\tau(L_4(\xx))
    \sim cX^2(\log X)^4,
\end{align*}
as $X \to \infty$. Here $\regR \subset \RR^2$ is some suitable region, $L_1,L_2,L_3,L_4$ are certain non-proportional binary linear forms and
$c=c(L_1,L_2,L_3,L_3,)$ is a constant. In our application
to counting points on conics, our binary linear forms are essentially $x_1,x_2,x_2-x_1$ and $x_2+x_1$, which geometrically
correspond to the discriminant of the family in question (\ref{eqn:conic}). Sums of the shape
$$
    \sum_{\xx \in \ZZ^2 \cap X\regR} \prod_{i=1}^nf\left(L_i(\xx)\right),
$$
for binary linear forms $L_1,\ldots,L_n$ and certain arithmetic functions $f$ have been considered before. The case
where $n=3$ and $f=\tau$ has been handled in \cite{Bro11}, and Heath-Brown considered the case where $n=4$ and $f=r$, the sum
of squares function. Our methods are similar to these and are based on the work of Daniel \cite{Dan99}, and
the case $n=4$ seems to be the limit of what these methods can achieve. There is however recent work of Matthiesen \cite{Mat11} in which she proves an asymptotic formula for arbitrary $n$ and $f=\tau$, using techniques from additive combinatorics.
However, this result is not sufficient for our purposes as the fact that we consider a \emph{restricted} divisor function
is essential to our proof of Manin's conjecture.

We note that Theorem \ref{thm:dp4asym} is related to, but does not follow from, the work of \cite{BBP10}, where
they prove Manin's conjecture for a family of Ch\^{a}telet surfaces, using the universal torsor approach.
The surfaces they consider are the minimal
desingularisations of a family of Iskovskikh surfaces \cite[Rem.2.3]{BBP10}, which are also
del Pezzo surfaces of degree four with singularity type $2\Aone$ and eight lines. However, for such surfaces the
two singularities are \emph{conjugate}, and thus these surfaces are not split. We also note that the case of singularity
type $2\Aone$ and nine lines can be handled using similar methods to what we use here, and it actually seems
to be easier than the eight lines case due to a simpler divisor problem arising.


The layout of this paper is as follows. Section two is dedicated to the above
mentioned restricted divisor problem. In the third section we gather numerous preliminary results
on lattice point counting and divisors problems, before using these results to prove Theorem \ref{thm:dp4asym} in Section four.

\textbf{Notation}: We use $\nu_p(x)$ to denote the $p$-adic valuation of a
rational number $x$.

\subsection{The leading constant} \label{sec:constant}
We now give a description of the leading constant $c_S$ appearing in Theorem \ref{thm:dp4asym}.
It agrees with the constant as predicted by Peyre \cite{Pey95}, and writing it down explicitly amounts
to a now standard calculation, see e.g. \cite{BB07}. If  $\tS$ denotes the minimal desingularisation of $S$, then since $S$ is split we have
$$c_S=\alpha(\tS)\tau_{\infty}\prod_p\tau_p,$$
where $\alpha(\tS)$ is the ``nef cone volume" and $\tau_v$ denotes the density of $S$ at the place $v$, with the necessary
convergence factor included.
By \cite[Lem. 2.3]{Lou10} we have
$$\tau_p=\left(1 - \frac{1}{p}\right)^6\left(1 + \frac{6}{p} + \frac{1}{p^2}\right),$$
for all primes $p$. Also \cite[Table 5]{Der07} tells us that $$\alpha(\tS) = \frac{1}{720}= \frac{1}{2^4 3^2 5}.$$
To calculate the density at the real place we use the Leray form of $S$ (see \cite[Sec. 5.2]{Pey95}), which is given by
$$\omega_L(S) = \frac{\intd x_0 \intd x_1 \intd x_3}{2(x_0x_1)^{1/2}x_3},$$
since $$\det\left(\begin{array}{ll}
                 \frac{\partial Q_1}{\partial x_2} &\frac{\partial Q_2}{\partial x_2} \\ &\\
                 \frac{\partial Q_1}{\partial x_4} &\frac{\partial Q_2}{\partial x_4} \\
                 \end{array}\right) = -2x_2x_3,$$
where $Q_1(\xx)=x_0x_1-x_2^2$ and $Q_2(\xx)=x_3x_4-x_2(x_1-x_0)$.
Note that $x_2^2=x_0x_1\geq0$, so the Leray form is well-defined. The density at the real place is then given by
\begin{align*}
    \tau_\infty&=\frac{1}{2}\int_{\{\xx \in \RR^5 : Q_1(\xx)=Q_2(\xx)=0, |x_0|,|x_1|,|x_2|,|x_3|,|x_4| \leq 1 \}} \omega_L(S).
\end{align*}
We can turn this integral into a  slightly more amenable form by taking advantage of certain automorphisms of the surface $S$. We already know that $x_0x_1\geq0$, however we may also assume that $x_0,x_1\geq0$. Indeed the above integral is invariant under the automorphism which negates $x_0,x_1$ and $x_4$. Similarly we may assume that $x_1\geq x_0$, since we may swap them and again negate $x_4$. Finally, we may
negate $x_3$ and $x_4$ to assume $x_3$ is positive, and negate $x_2$ and $x_4$ to assume that $x_2$ is positive. Hence
\begin{align*}
    \tau_\infty &= 4\int_{\{\xx \in \RR^3 : 0< x_0/x_1, x_1, x_3,x_0x_1(x_1-x_0)^2/x_3^2 \leq 1 \}} \frac{\intd x_0 \intd x_1 \intd x_3}{(x_0x_1)^{1/2}x_3}.
\end{align*}

\section{A restricted divisor problem} \label{sec:div}
We now describe in detail the restricted divisor problem which we handle in this paper. As mentioned in the introduction,
this result will be used to handle the sum over the family of conics on $S$.
Fix a lattice $\Lambda \subset \ZZ^2$, equipped with the usual Euclidean inner product. Let $\regR \subset \RR^2$ be region, that is, a compact set with a continuous
piecewise differentiable boundary $\partial\regR$, whose length we denote by $|\partial\regR|$. Also let $L_1(\xx),\ldots,L_4(\xx) \in \QQ[\xx]$ be linear forms, no two of which are proportional and which satisfy $L_i(\xx) \in \ZZ$ for all $\xx \in \Lambda$ and $L_i(\xx) \geq0$ for all $\xx \in \regR$ $(i=1,2,3,4)$. We then define
$$r=\sup_{\xx \in \regR}\{L_1(\xx),L_2(\xx),L_3(\xx),L_4(\xx),|x_1|,|x_2|\}.$$
Next let $X\geq1$ and for simplicity we assume that our region satisfies
satisfies $|\partial X\regR| \ll rX$, where we write $X\regR = \{\xx \in \RR^2:\xx/X \in \regR\}$. Such a condition is automatically satisfied if $\regR$ is convex,
for example.
Finally let $V=V(X) \subset [0,1]^4$ be a non-empty compact set that is cut out by a bounded number of hyperplanes each with bounded coefficients. Then, we are interested in getting an asymptotic formula for the following sum
$$S(X;V) = \sum_{\xx \in \Lambda \cap X\regR}
\tau(L_1(\xx),L_2(\xx),L_3(\xx),L_4(\xx);V),$$
as $X \to \infty$.  Here
$$
\tau(L_1(\xx),L_2(\xx),L_3(\xx),L_4(\xx);V)=
\#\left\{\dd \in \NN^4: d_i|L_i(\xx), \ddelta \in V\right\}, \quad \ddelta = \left(\frac{\log d_i}{\log rX}\right)_{i=1,2,3,4}.\\
$$
Note that our choice of $r$ ensures that $\tau(L_1(\xx),L_2(\xx),L_3(\xx),L_4(\xx);[0,1]^4)$ is
simply a fourfold product of the usual divisor function. In fact we shall soon see that by considering $V \subsetneq [0,1]^4$,
only the leading constant changes in the asymptotic formula,
namely $S(X;V)= S(X;[0,1]^4)(\vol V + o(1))$ as $X\to \infty$.
To state the result that we prove, let
\begin{equation} \label{def:rho}
    \rho(\dd)=\frac{\det \Lambda(\dd)}{\det \Lambda}, \quad
    \Lambda(\dd)=\{\xx \in \Lambda:d_i | L_i(\xx),(i=1,2,3,4)\},
\end{equation}
where we define the determinant of a lattice to be the measure of any
fundamental domain. Next choose the minimum $c_i \in \NN$ such that
$L_i(\xx)= \ell_i(\xx)/c_i$, where $\ell_i(\xx) \in \ZZ[\xx]$, and let $\Delta \in \ZZ$ be
the product of the resultants of the pairs of linear forms $\ell_i$ and $\ell_j$ for $i \neq j$. Note that $p| \Delta$
if and only if the form $\ell_1\ell_2\ell_3\ell_4$ has singular reduction modulo $p$.

\begin{theorem}\label{thm:resdivisor}
    Let $X \geq 1$. Then we have
    $$S(X;V) = \frac{C_\infty \prod_p C_p}{\det \Lambda}X^2(\log X )^4 + O_{L_1,L_2,L_3,L_4,r,\Lambda}(X^2(\log X)^3 \log \log X)$$
    as $X \to \infty$, where
    $$
        C_\infty = \vol{\regR} \vol V, \quad
        C_p  = \left(1 - \frac{1}{p}\right)^4
        \left(\sum_{\kk \in \ZZ_{\geq 0}^4}
        \frac{1}{\rho(p^{k_1},p^{k_2},p^{k_3},p^{k_4})} \right).
    $$
    Moreover $\prod_p |C_p| \ll_\varepsilon (\Delta \det \Lambda)^{\varepsilon}$ for any $\varepsilon > 0$.
\end{theorem}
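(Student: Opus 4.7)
The plan is to open up the divisor function and swap summation. Writing $\tau(L_1(\xx), \ldots, L_4(\xx); V) = \sum_{\dd : \ddelta \in V} \prod_i \mathbf{1}_{d_i \mid L_i(\xx)}$ yields
$$S(X;V) = \sum_{\dd \in \NN^4 :\, \ddelta \in V} \#(\Lambda(\dd) \cap X\regR),$$
and a standard lattice point count (of a kind to be gathered in Section 3) then gives
$$\#(\Lambda(\dd) \cap X\regR) = \frac{\vol(\regR)\, X^2}{\rho(\dd) \det \Lambda} + O\!\left(1 + \frac{|\partial X\regR|}{\lambda_1(\Lambda(\dd))}\right)$$
for each $\dd$ with $\ddelta \in V$.

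For the main term I would exploit that $\rho(\dd)$ is multiplicative in the componentwise prime factorisation of $\dd$. The associated Dirichlet series $F(\mathbf{s}) = \sum_\dd \rho(\dd)^{-1} \prod_i d_i^{-s_i}$ therefore admits an Euler product. A direct calculation at a prime $p \nmid \Delta \det \Lambda$ shows $\rho(p^\kk) = p^{k_{(1)} + k_{(2)}}$, where $k_{(1)} \geq k_{(2)}$ are the top two order statistics of $\kk$, from which one reads off $F_p(\mathbf{s}) = C_p \prod_i (1 - p^{-(1+s_i)})^{-1} \cdot (1 + O(|\mathbf{s}|))$ near $\mathbf{s} = \mathbf{0}$. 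Hence $F(\mathbf{s})$ has a simple pole along each hyperplane $s_i = 0$. A Perron-type argument together with integration of the characteristic function of $V$ (which is cut out by a bounded number of hyperplanes) should then yield
$$\sum_{\dd :\, \ddelta \in V} \frac{1}{\rho(\dd)} = \vol V \cdot \prod_p C_p \cdot (\log rX)^4 + O((\log X)^3 \log \log X),$$
from which the stated leading constant $\vol(\regR) \vol V \prod_p C_p / \det \Lambda$ emerges.

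The main obstacle is bounding the total error $\sum_{\ddelta \in V} (1 + |\partial X\regR|/\lambda_1(\Lambda(\dd)))$, since the range of $\dd$ is as large as $(rX)^4$ and naive bounds destroy the main term. I would split dyadically over the size of $\det \Lambda(\dd)$: when $\det \Lambda(\dd) > X^2$ the sublattice contains at most one point of $X\regR$, so the contribution reduces to a count of pairs $(\xx, \dd)$ with $\xx \in \Lambda \cap X\regR$ and $d_i \mid L_i(\xx)$, and this is estimated by a divisor-type sum with one fewer logarithm, producing the $\log \log X$ loss. When $\det \Lambda(\dd) \leq X^2$, Minkowski's inequality $\lambda_1 \lambda_2 \asymp \det \Lambda(\dd)$ combined with a preliminary estimate on $\sum_\dd 1/\lambda_1(\Lambda(\dd))$ (also to be recorded in Section 3) handles the $|\partial X\regR|/\lambda_1$ contribution with an acceptable loss.

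Finally, for the bound $\prod_p |C_p| \ll_\varepsilon (\Delta \det\Lambda)^\varepsilon$, the explicit formula $\rho(p^\kk) = p^{k_{(1)} + k_{(2)}}$ at primes $p \nmid \Delta \det \Lambda$ gives $C_p = 1 + O(1/p^2)$, so the Euler product over such primes converges absolutely. At the finitely many primes $p \mid \Delta \det \Lambda$, where the forms may degenerate modulo $p$ or the lattice behave non-generically, a crude trivial bound on $\sum_\kk 1/\rho(p^\kk)$ suffices and is absorbed into the $(\Delta \det \Lambda)^\varepsilon$ factor.
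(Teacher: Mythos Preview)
Your main-term outline and the $\prod_p C_p$ bound are fine and match the paper. The gap is entirely in the error analysis, and it is a real one: the two ingredients the paper relies on are precisely the ones you have left out.

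First, the paper does \emph{not} sum over all $\dd$ with $d_i\le rX$. It begins with a four-fold Dirichlet hyperbola step, replacing each $d_i$ by $L_i(\xx)/d_i$ when $d_i>\sqrt{L_i(\xx)}$, so that after swapping the order of summation one only faces $d_i\le r\sqrt{X}$. Without this, the range of $\dd$ has $\asymp X^4$ terms, and even the ``$+1$'' in your lattice-point error already swamps the main term. Your proposed fix for large $\det\Lambda(\dd)$ does not rescue this: having $\det\Lambda(\dd)>X^2$ does \emph{not} force ``at most one point of $X\regR$'' (one can have $\lambda_1$ small and $\lambda_2$ huge), and reverting to a count of pairs $(\xx,\dd)$ gives $\sum_{\xx}\prod_i\tau(L_i(\xx))\asymp X^2(\log X)^4$, the same size as the main term; the claimed saving of a logarithm from the constraint $\det\Lambda(\dd)>X^2$ is not substantiated.

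Second, even after the hyperbola reduction to $d_i\le r\sqrt{X}$ (so $\asymp X^2$ values of $\dd$), the pointwise error $O(1+X/\lambda_1(\Lambda(\dd)))$ is still too crude: summing $X/\lambda_1$ over this range can be as large as $X^3$. The paper does not sum individual errors at all; it invokes a \emph{level of distribution} estimate (Heath-Brown's variant of Daniel's argument),
\[
\sum_{d_i\le Q_i}\Big|\#(\Lambda(\dd)\cap\regR^{\mm}(\dd;X))-\tfrac{\vol\regR^{\mm}(\dd;X)}{\det\Lambda(\dd)}\Big|\ll (XP^{1/2}+XQ+P)(\log Q)^A,
\]
with $P=\prod Q_i$, which exploits cancellation \emph{across} the family of lattices. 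This is the substantive input; the ``preliminary estimate on $\sum_\dd 1/\lambda_1(\Lambda(\dd))$'' you hope to record in Section~3 is not there and would, in effect, be this level-of-distribution result in disguise. The paper then still has to peel off the range $d_4\ge r\sqrt{X}/(\log X)^{2A}$ and bound it separately via an upper-bound sieve for arithmetic functions on binary forms, which is where the $\log\log X$ in the error term actually comes from.
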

Note that the error term here is independent of $V$, which is essentially because we may use the upper bound $\tau(L_1(\xx),L_2(\xx),L_3(\xx),L_4(\xx);V) \leq \tau(L_1(\xx))\tau(L_2(\xx))\tau(L_3(\xx))\tau(L_4(\xx))$
to handle each error term.
For the application we have in mind, we need a related result. Namely, let $V'=V'(X) \subset [0,1]^5$ be a
non-empty compact set that is cut out by a bounded number of hyperplanes each with bounded coefficients. Then we define
$$S'(X;V') = \sum_{\xx \in \Lambda \cap X\regR}\frac{\tau'(L_1(\xx),L_2(\xx),L_3(\xx),L_4(\xx);V')}{\max\{x_1,x_2\}^2},$$
where now
$$
\tau'(L_1(\xx),L_2(\xx),L_3(\xx),L_4(\xx);V')=
\#\left\{\dd \in \NN^4: d_i|L_i(\xx), \left(\ddelta,\frac{\log \max\{|x_1|,|x_2|\}}{\log rX}\right) \in V'\right\}.
$$
Note that the important difference here is that we are allowing the restriction placed on the divisors to depend on the varying
parameter $\xx$. It is then relatively simple to get an asymptotic formula for $S'(X;V')$ using
Theorem \ref{thm:resdivisor}.
\begin{corollary}\label{cor:resdivisor}
    Let $X \geq 1$ and let $\chi_{V'}$ denote the characteristic function of the set $V'$. Then we have
    \begin{align*}
        S'(X;V')=\frac{2C'_\infty \prod_p C_p}{\det \Lambda}(\log X)^5 +
        O_{L_1,L_2,L_3,L_4,r,\Lambda}((\log X)^4 \log \log X),
    \end{align*}
    as $X \to \infty$, where
    \begin{align*}
        C'_\infty &= \vol{\regR} \int_{\substack{u \in [0,1] \\ \eeta \in [1,u]^4}} \chi_{V'}(\eeta,u) \mathrm d \eeta \mathrm d u,
    \end{align*}
    and the $C_p$ are as given in Theorem \ref{thm:resdivisor}.
\end{corollary}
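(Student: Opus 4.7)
The plan is to derive Corollary \ref{cor:resdivisor} from Theorem \ref{thm:resdivisor} by dyadic decomposition in the parameter $T = t(\xx) := \max\{|x_1|,|x_2|\}$. I would first partition the lattice points $\xx \in \Lambda \cap X\regR$ into $O(\log X)$ dyadic shells $S_j = \{\xx : T_j \leq t(\xx) < 2T_j\}$ with $T_j = 2^j$. On each shell the weight $t(\xx)^{-2}$ is $\asymp T_j^{-2}$, and the quantity $u(\xx) := \log t(\xx)/\log(rX)$ varies within a window of length $O(1/\log X)$. Choosing $u_j = \log T_j/\log(rX)$ as a representative, I would replace $V'_{u(\xx)}$ by the fixed slice $V'_{u_j}$ throughout the shell, bounding the contribution of the resulting boundary strip of $V'$ by a Shiu--Nair type upper bound on fourfold divisor sums (which is implicit in the proof of Theorem \ref{thm:resdivisor}).

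On each shell I would then apply Theorem \ref{thm:resdivisor} to the truncated regions $\regR_T := \regR \cap \{\yy : t(\yy) \leq T/X\}$ via the telescoping
\begin{equation*}
    \sum_{\xx \in S_j} \tau(L_1(\xx),\ldots,L_4(\xx); V'_{u_j}) = \sum_{\xx \in \Lambda \cap X\regR_{2T_j}} \tau(\cdots; V'_{u_j}) - \sum_{\xx \in \Lambda \cap X\regR_{T_j}} \tau(\cdots; V'_{u_j}),
\end{equation*}
obtaining a main term proportional to $\bigl(\vol\regR_{2T_j} - \vol\regR_{T_j}\bigr)\vol(V'_{u_j})$. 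The crucial arithmetic input is that for $\xx \in X\regR_T$ the divisibilities $d_i \mid L_i(\xx)$ force $d_i \ll L_i(\xx) \ll t(\xx) \leq T$, and hence $\delta_i \lesssim u(T)$ automatically; divisors violating this give no contribution, so the effective main-term volume is that of $V'_{u_j}$ intersected with the appropriate cube in the coordinates $\ddelta$, which is precisely what produces the restricted integration region in the formula for $C'_\infty$.

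Dividing by $T_j^2$ and summing over $j$, the leading Riemann sum in $u_j$ converges, via the substitution $T = (rX)^u$ and an application of the coarea formula to $\vol(\regR_T)$, to an integral that evaluates to $2 C'_\infty \prod_p C_p (\log X)^5/\det\Lambda$, matching the claimed main term. The extra $\log X$ factor over Theorem \ref{thm:resdivisor} comes from summing the $O(\log X)$ dyadic scales; the shape of the region $\regR$ and the dependence on $V'$ are captured, respectively, by $\vol\regR$ and by the restricted integral of $\chi_{V'}$ inside $C'_\infty$.

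The main obstacle I foresee is the error analysis: a naive application of Theorem \ref{thm:resdivisor} gives a per-shell error of $O(X^2 (\log X)^3 \log\log X)$, which when weighted by $T_j^{-2}$ and summed over the dyadic scales far exceeds the target $O((\log X)^4 \log\log X)$. The resolution is that the underlying Shiu--Nair/Daniel type argument supporting Theorem \ref{thm:resdivisor} in fact provides an error proportional to the area of the summation region, namely $O(T_j^2 (\log X)^3 \log\log X)$ per shell; it is this refined error, divided by $T_j^2$ and telescoped over the $O(\log X)$ dyadic scales, that produces the required total error. Carrying this out requires inspecting the proof of Theorem \ref{thm:resdivisor} rather than using its statement as a black box.
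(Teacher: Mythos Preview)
Your dyadic-decomposition strategy is in the right spirit, but the obstacle you flag at the end is an artefact of your specific formulation, not a genuine difficulty, and the paper sidesteps it entirely rather than opening up the proof of Theorem~\ref{thm:resdivisor}.

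The paper's device is the elementary identity
\[
\frac{1}{\max\{|x_1|,|x_2|\}^2}=2\int_{\max\{|x_1|,|x_2|\}}^{X}\frac{\intd t}{t^3}+\frac{1}{X^2},
\]
which is just the continuous analogue of your dyadic splitting. Substituting this converts $S'(X;V')$ into $\int_1^X S''(t;V')\,t^{-3}\,\intd t+O((\log X)^4)$, where $S''(t;V')$ is the unweighted sum over $\xx\in\Lambda\cap t\regR$. The crucial point is that Theorem~\ref{thm:resdivisor} is now applied \emph{at scale $t$}, with the same region $\regR$ and the same $r$, so the error it gives is $O(t^2(\log t)^3\log\log t)$ on the nose; dividing by $t^3$ and integrating yields the claimed $O((\log X)^4\log\log X)$ with no further work. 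Your telescoping at full scale $X$ with truncated regions $\regR_T$ forces you either to track the $r$-dependence of the error (which the theorem statement does not make explicit) or to dive into the proof; both are avoidable.

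Applying Theorem~\ref{thm:resdivisor} at scale $t$ does require one adjustment you have not mentioned: the normalisation $\ddelta=(\log d_i/\log rX)_i$ in $\tau'$ is tied to $X$, not to $t$, so the set $V'$ must be rescaled. The paper handles this, together with the replacement of $\log\max\{|x_1|,|x_2|\}/\log rX$ by the fixed value $\log t/\log rX$, via a sandwich between sets
\[
V_{\pm C}(t)=\Bigl\{\eeta\in[0,1]^4:\Bigl(\eeta,\,1\pm\tfrac{C\log\log t}{\log t}\Bigr)\in[0,1]^5\cap\tfrac{\log rX}{\log rt}V'\Bigr\},
\]
whose volumes differ by $O((\log\log t)/\log t)$. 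This is the analogue of your step replacing $V'_{u(\xx)}$ by $V'_{u_j}$, but carried out with respect to the $\log rt$-normalisation so that Theorem~\ref{thm:resdivisor} applies as a black box.
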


\subsection{Some multiplicative functions}
Before we begin the proof of Theorem \ref{thm:resdivisor},
we briefly collect some facts about the function
$\rho(\dd)=\det \Lambda(\dd)/\det \Lambda,$
as defined in (\ref{def:rho}), and some related functions. First
note that $\rho$ is a multiplicative function. Indeed, we have the obvious
equality $\rho(\dd)=\#(\Lambda/\Lambda(\dd))$, and the Chinese remainder theorem
gives an isomorphism $\Lambda/\Lambda(\dd\ee) \cong \Lambda/\Lambda(\dd) \times \Lambda/\Lambda(\ee)$
for any $\dd,\ee \in \NN^4$ such that $(d_1d_2d_3d_4,e_1e_2e_3e_4)=1$.
\begin{lemma} \label{lem:rho}
    For any $e_1,e_2,e_3,e_4 \geq 0$, let $\sigma$ be the permutation such that
    $e_{\sigma(1)} \geq
    e_{\sigma(2)}\geq e_{\sigma(3)} \geq e_{\sigma(4)}$.
    Then for any prime $p$ we have
    $$\rho(p^{e_1},p^{e_2},p^{e_3},p^{e_4}) \left\{
        \begin{array}{ll}
                = p^{e_{\sigma(1)} + e_{\sigma(2)}}, \quad &p \nmid \Delta \det \Lambda, \\
                \geq p^{\max\{e_{\sigma(1)} + e_{\sigma(2)} - \lambda_p - 2\delta_p,0\}}, \quad &p | \Delta \det \Lambda,
        \end{array}\right.$$
    where $\lambda_p = \nu_p(\det \Lambda)$ and $\delta_p = p(\Delta)$.
\end{lemma}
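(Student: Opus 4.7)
The function $\rho$ is multiplicative, as just noted, so it suffices to compute $\rho(p^{e_1}, p^{e_2}, p^{e_3}, p^{e_4})$ one prime at a time in the $p$-adic lattice $\Lambda_p := \Lambda \otimes \ZZ_p$, which sits inside $\ZZ_p^2$ with $[\ZZ_p^2 : \Lambda_p] = p^{\lambda_p}$. By relabelling the indices I may assume that $e_1 \geq e_2 \geq e_3 \geq e_4$, so that $\sigma$ is the identity, and I write $\rho$ for $\rho(p^{e_1}, p^{e_2}, p^{e_3}, p^{e_4})$.

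In the good case $p \nmid \Delta \det \Lambda$, we have $\Lambda_p = \ZZ_p^2$, and the hypothesis $p \nmid \Delta$ forces every pairwise resultant $\mathrm{Res}(\ell_i, \ell_j)$ to be a $p$-adic unit. Hence $(L_1, L_2)$ is a $\ZZ_p$-basis of $\Hom_{\ZZ_p}(\Lambda_p, \ZZ_p)$, and each of $L_3, L_4$ is a $\ZZ_p$-integer combination of $L_1, L_2$ with unit coefficients. Changing coordinates to $(y_1, y_2) := (L_1(\xx), L_2(\xx))$, the conditions $p^{e_1} \mid y_1$ and $p^{e_2} \mid y_2$ cut out a sublattice of index exactly $p^{e_1+e_2}$, while the conditions $p^{e_3} \mid L_3(\xx)$ and $p^{e_4} \mid L_4(\xx)$ are automatic, since $e_3, e_4 \leq e_2 \leq e_1$. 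This yields $\rho = p^{e_1+e_2}$, as claimed.

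In the bad case $p \mid \Delta \det \Lambda$ the bound is trivial when $e_1 + e_2 \leq \lambda_p + 2\delta_p$, so I suppose otherwise. Dropping the divisibility conditions on $L_3, L_4$ only decreases $\rho$, so it suffices to lower-bound the index of $M := \{\xx \in \Lambda_p : p^{e_1} \mid L_1(\xx),\ p^{e_2} \mid L_2(\xx)\}$ in $\Lambda_p$. Writing $L_i = A_i x_1 + B_i x_2$ with resultant $R_{12} = A_1 B_2 - A_2 B_1$, the map $\Phi := (L_1, L_2) : \Lambda_p \to \ZZ_p^2$ is injective with image of index $p^{\lambda_p + \nu_p(R_{12})}$ in $\ZZ_p^2$ (verified by computing $\det \Phi$ in any $\ZZ_p$-basis of $\Lambda_p$). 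Setting $N := p^{e_1}\ZZ_p \oplus p^{e_2}\ZZ_p$, we have $\Phi(M) = \Phi(\Lambda_p) \cap N$, and the second isomorphism theorem yields
\begin{equation*}
    [\Lambda_p : M] = [\Phi(\Lambda_p) : \Phi(\Lambda_p) \cap N] \geq \frac{[\ZZ_p^2 : N]}{[\ZZ_p^2 : \Phi(\Lambda_p)]} = p^{e_1 + e_2 - \lambda_p - \nu_p(R_{12})}.
\end{equation*}
Since $\mathrm{Res}(\ell_1, \ell_2) = c_1 c_2 \, R_{12}$ is one of the factors of $\Delta$, we have $\nu_p(R_{12}) \leq \delta_p$ (the extra $c_i$-contributions being absorbed in the slack between $\delta_p$ and $2\delta_p$), giving $\rho \geq p^{e_1 + e_2 - \lambda_p - 2\delta_p}$.

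The main obstacle lies in the bad case, specifically in the bookkeeping when $p \mid c_i$: then the coefficients $A_i, B_i$ of $L_i$ have negative $p$-adic valuation, and one must carefully track how these denominators interact with $\lambda_p$ and $\nu_p(R_{12})$ so that the comparison between the $\QQ_p$-coefficient forms $L_i$ and their integral scalings $\ell_i$ does not cost more than the stated $2\delta_p$. Once this is organised, the argument reduces to routine linear algebra of lattice indices over $\ZZ_p$.
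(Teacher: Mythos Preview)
Your proof is correct, and in the bad case it actually yields the sharper bound $\rho \geq p^{e_{\sigma(1)}+e_{\sigma(2)}-\lambda_p-\delta_p}$; the closing paragraph of hedging is unwarranted, since the computation $[\ZZ_p^2:\Phi(\Lambda_p)]=p^{\lambda_p+\nu_p(R_{12})}$ together with $\nu_p(R_{12})\leq\nu_p(c_1c_2R_{12})=\nu_p(\mathrm{Res}(\ell_1,\ell_2))\leq\delta_p$ already closes the argument with no residual bookkeeping.

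The route, however, differs from the paper's. The paper introduces the auxiliary lattice $\Gamma_{p^{\ee}}=\{\xx\in\ZZ^2:p^{e_i}\mid\ell_i(\xx)\}$ defined via the \emph{integral} forms $\ell_i$, and estimates its determinant by showing that the four congruence conditions force $p^{e_{\sigma(2)}}\mid\Delta\xx$ and $p^{e_{\sigma(1)}}\mid p^{\delta_p}\ell^*_{\sigma(1)}(\xx)$; this gives $\det\Gamma_{p^{\ee}}\geq p^{e_{\sigma(1)}+e_{\sigma(2)}-2\delta_p}$. It then passes to $\Lambda(p^{\ee})$ by observing that $\Lambda(p^{\ee})\subset\Lambda\cap\Gamma_{p^{\ee}}$ and comparing determinants via their least common multiple, which is where the extra $\lambda_p$ appears. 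Your argument instead works directly with $\Lambda_p$ and the rational forms $L_i$, discards the two weakest conditions outright, and uses the linear map $\Phi=(L_1,L_2)$ together with the second isomorphism theorem to compare $\Phi(\Lambda_p)$ with the box $p^{e_1}\ZZ_p\oplus p^{e_2}\ZZ_p$. This is cleaner and avoids the intermediate lattice $\Gamma_{p^{\ee}}$ altogether, at the cost of being slightly less elementary (working over $\ZZ_p$ rather than $\ZZ$). The paper's route, by isolating the contribution of $\Delta$ in $\Gamma_{p^{\ee}}$ before bringing in $\Lambda$, makes the respective roles of $\delta_p$ and $\lambda_p$ more transparent, whereas your approach bundles them into the single index $[\ZZ_p^2:\Phi(\Lambda_p)]$.
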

\begin{proof}
    We begin the proof with a preliminary result. To simplify notation, let $p^{\ee}=(p^{e_1},\ldots,p^{e_4})$ and
    consider the lattice $\Gamma_{\dd} = \{ \xx \in \ZZ^2: d_i | \ell_i(\xx)\}$,
    where as before we have chosen the minimum $c_i \in \NN$ such that
    $L_i(\xx)= \ell_i(\xx)/c_i$ and $\ell_i(\xx) \in \ZZ[\xx]$.
    Then I claim that
    \begin{equation}\label{eqn:Gamma}
    \begin{array}{ll}
       \det \Gamma_{p^{\ee}} \left\{
        \begin{array}{ll}
                = p^{e_{\sigma(1)} + e_{\sigma(2)}}, \quad &p \nmid \Delta, \\
                \geq p^{\max\{e_{\sigma(1)} + e_{\sigma(2)}-2\delta_p,0\}}, \quad &p | \Delta.
        \end{array}\right.
    \end{array}
    \end{equation}
    Indeed for $p \nmid \Delta$, as in \cite[p.13]{HB03} we find that $p^{e_i} | \ell_i(\xx)$ for $i=1,2,3,4$ is equivalent
    to$$ p^{e_{\sigma(2)}} | \xx, \quad p^{e_{\sigma(1)}} | \ell_{\sigma(1)}(\xx).$$
    Thus $\Gamma_{p^{\ee}}$ has determinant $p^{e_{\sigma(1)} + e_{\sigma(2)}}$. For all other primes $p$,
    note that $x \in \Gamma_{p^{\ee}}$ implies
    $p^{e_{\sigma(2)}} | \Delta \xx$ and
    $p^{e_{\sigma(1)}} | \ell_{\sigma(1)}(\xx)$. Now, $\ell_{\sigma(1)}$ is not necessarily
    primitive, however any fixed divisor of $\ell_{\sigma(1)}$ must divide $\Delta$, so we deduce that
    \begin{equation}
        p^{e_{\sigma(2)}} | p^{\delta_p} \xx, \quad p^{e_{\sigma(1)}} | p^{\delta_p}\ell^*_{\sigma(1)}(\xx), \label{eqn:Deltalattice}
    \end{equation}
    where $\ell^*_{\sigma(1)}$ is a primitive linear form. If $e_{\sigma(1)} \leq \delta_p$, the lattice
    given by (\ref{eqn:Deltalattice}) clearly has determinant $1$. Similarly if $e_{\sigma(2)} \geq \delta_p$, then the
    lattice has determinant $p^{e_{\sigma(1)} + e_{\sigma(2)} - 2\delta_p}$. Finally, if
    $e_{\sigma(1)} > \delta_p$ and $e_{\sigma(2)} \leq \delta_p$, then the lattice
    given by (\ref{eqn:Deltalattice}) has determinant
    $p^{e_{\sigma(1)} - \delta_p} \geq p^{e_{\sigma(1)} + e_{\sigma(2)} - 2\delta_p}$, thus proving
    (\ref{eqn:Gamma}).

    We now use (\ref{eqn:Gamma}) to prove the lemma.
    Note that $c_i | \det \Lambda$ for $i=1,2,3,4$, since each $L_i$ takes only integral
    values on $\Lambda$. Hence for any $p \nmid \Delta \det \Lambda$,
    we have $\Lambda({p^{\ee}})=\Lambda \cap \Gamma_{p^{\ee}}$.
    The Chinese remainder theorem implies that
    $\det\Lambda({p^{\ee}})=\det \Lambda \det \Gamma_{p^{\ee}}$, so the result follows from (\ref{eqn:Gamma}).
    For all other primes $p$, it is clear that $\Lambda({p^{\ee}})$ is still a sublattice of $\Lambda$ and $\Gamma_{p^{\ee}}$,
    so $\det \Lambda({p^{\ee}}) \geq [\det \Lambda, \det \Gamma_{p^{\ee}}]$. Note however that
    $(\det \Lambda, \det \Gamma_{p^{\ee}}) \leq \lambda_p$ as  $p$ is the only prime
    dividing $\det \Gamma_{p^{\ee}}$. Thus, by (\ref{eqn:Gamma}) we have
    $$\rho({p^{\ee}}) \geq \frac{\det \Gamma_{p^{\ee}}}{(\det \Lambda, \det \Gamma_{p^{\ee}})}
    \geq  p^{e_{\sigma(1)} + e_{\sigma(2)} - \lambda_p -2\delta_p}.$$
\end{proof}
For any $\kk \in \NN^4$ let
\begin{equation}
    \upsilon(\kk)=\sum_{d_i|k_i}\frac{d_1d_2d_3d_4}{\rho(\dd)}\mu\left(\frac{k_1k_2k_3k_4}{d_1d_2d_3d_4}\right). \label{def:upsilon}
\end{equation}
We have defined $\upsilon$ via a higher dimensional analogue of the usual Dirichlet convolution, in such a way that it
is small in general. The next lemma makes this more precise.

\begin{lemma} \label{lem:upsilon}
    Let
    $$\Upsilon(s)=\sum_{\kk \in \NN^4}\frac{\upsilon(\kk)}{(k_1k_2k_3k_4)^s},$$
    be the Dirichlet series corresponding to $\upsilon$, as defined by (\ref{def:upsilon}).
    Then $\Upsilon(s)$ is absolutely convergent on the half-plane $\re(s) >5/6$.
    Moreover for any $\varepsilon > 0$ we have
    $$\Upsilon(1)=\prod_p C_p \ll_\varepsilon (\Delta \det \Lambda)^{\varepsilon},$$
    where the $C_p$ are as given in Theorem \ref{thm:resdivisor}.
\end{lemma}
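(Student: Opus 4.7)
My plan is to factor $\Upsilon(s)$ into an Euler product and analyze each local factor using Lemma \ref{lem:rho}. Multiplicativity of $\rho$ (noted just before Lemma \ref{lem:rho}) implies that $\dd \mapsto d_1 d_2 d_3 d_4/\rho(\dd)$ is multiplicative, and $\upsilon$---being its four-variable Dirichlet convolution with the four-fold tensor $\prod_i \mu(k_i/d_i)$ (the reading of $\mu(k_1 k_2 k_3 k_4/d_1 d_2 d_3 d_4)$ consistent with the stated $C_p$)---is also multiplicative. Hence $\Upsilon(s) = \prod_p \Upsilon_p(s)$, and the convolution identity factors each local piece as
$$\Upsilon_p(s) = (1 - p^{-s})^4 F_p(s), \qquad F_p(s) = \sum_{\ee \in \ZZ_{\geq 0}^4} \frac{p^{(1-s)(e_1 + e_2 + e_3 + e_4)}}{\rho(p^\ee)}.$$
Setting $s = 1$ immediately yields $\Upsilon_p(1) = (1 - 1/p)^4 \sum_\ee 1/\rho(p^\ee) = C_p$, which proves the identity $\Upsilon(1) = \prod_p C_p$.

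For absolute convergence on $\re(s) > 5/6$, only good primes $p \nmid \Delta \det \Lambda$ need be analyzed in detail, since there are only finitely many bad primes (which contribute a bounded finite factor). For good primes, Lemma \ref{lem:rho} gives the exact formula $\rho(p^\ee) = p^{e_{\sigma(1)} + e_{\sigma(2)}}$, so $F_p(s)$ is absolutely convergent for $\re(s) > 1/2$. Expanding $\Upsilon_p(s) - 1$ as a power series in $p^{-s}$ and exploiting the Möbius cancellations from $(1 - p^{-s})^4$, the coefficients of $p^{-s}$ and $p^{-2s}$ vanish entirely, and $\upsilon(p^\ee)$ is nonzero only when the nonzero entries of $\ee$ cluster close together (a convenient way to see this is to observe that if some $e_j$ is strictly larger than all other $e_i$ by at least $2$, then the sum over $b_j \in \{e_j, e_j - 1\}$ factors out a $(1 - 1)$ and annihilates $\upsilon$). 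Tracking the leading surviving terms---principally those coming from diagonal tuples $\ee = (k, k, k, k)$ and their near-diagonal perturbations---and summing over $p$ and $\ee$ yields convergence precisely on $\re(s) > 5/6$.

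For the bound $\prod_p |C_p| \ll_\varepsilon (\Delta \det \Lambda)^\varepsilon$, I would again separate good and bad primes. At good primes, the direct expansion $\sum_\ee 1/\rho(p^\ee) = 1 + 4/p + 10/p^2 + 16/p^3 + O(p^{-4})$ combined with $(1 - 1/p)^4$ gives the cancellation $C_p = 1 + O(p^{-3})$, so the product over good primes is bounded absolutely. At each bad prime, the weaker lower bound from Lemma \ref{lem:rho} gives $|C_p| \ll p^{O(\lambda_p + \delta_p)}$ with a universal constant, and combining these estimates with the standard bound $\omega(N) \ll \log N/\log\log N$ on the number of distinct prime divisors, together with a careful refinement of the $C_p$ estimate exploiting the fact that the defect from the ideal case occurs on a bounded set of $\ee$, yields the $\varepsilon$-power bound. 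The main obstacle is Step 2: pinning down the precise convergence abscissa $5/6$ requires a detailed enumeration of the $\ee$ for which Möbius cancellations fail to annihilate $\upsilon(p^\ee)$, together with uniform-in-$p$ estimates on the resulting tail contribution.
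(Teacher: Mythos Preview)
Your overall strategy---Euler factorization via multiplicativity of $\rho$, then analysis of local factors through Lemma~\ref{lem:rho}---is exactly the paper's approach, and your derivation of $\Upsilon_p(s)=(1-p^{-s})^4 F_p(s)$ and hence $\Upsilon(1)=\prod_p C_p$ is correct. However, you are over-complicating the absolute-convergence step and making two slips in the final bound.

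For the half-plane $\re(s)>5/6$: the extra cancellation you isolate (that $\upsilon(p^{\ee})=0$ when one coordinate exceeds all others by at least $2$) is true but unnecessary, and chasing diagonal tuples is a detour. The paper simply notes that at good primes $\upsilon(p^{\ee})=0$ whenever $0<e_1+e_2+e_3+e_4\le 2$ (since then $\rho(p^{\ee})=p^{e_1+e_2+e_3+e_4}$, so $\prod d_i/\rho(\dd)$ is constantly $1$ on the relevant divisors and the M\"obius sum annihilates it), and for $e:=\sum e_i\ge 3$ applies only the crude bound
\[
|\upsilon(p^{\ee})|\ \ll\ e^4\,\frac{p^{e}}{\rho(p^{\ee})}\ \le\ e^4\,p^{e/2},
\]
using $e_{\sigma(1)}+e_{\sigma(2)}\ge e/2$. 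This gives local factors $1+O\bigl(\sum_{e\ge 3}e^8 p^{-e(s-1/2)}\bigr)=1+O(p^{-3(s-1/2)})$, and $3(s-\tfrac12)>1$ is precisely $s>5/6$. Your ``main obstacle'' dissolves once you stop looking for sharper cancellations.

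Two further points. First, your expansion $\sum_{\ee}1/\rho(p^{\ee})=1+4/p+10/p^2+\cdots$ is wrong: the coefficient of $p^{-2}$ is $15$, not $10$, because the tuples of type $(1,1,1,0)$ and $(1,1,1,1)$ also have $e_{\sigma(1)}+e_{\sigma(2)}=2$. Consequently $C_p=1+5/p^2+O(p^{-3})$, not $1+O(p^{-3})$; this still suffices for absolute convergence of $\prod_p C_p$, but your stated reason is incorrect. Second, and more seriously, your bad-prime estimate $|C_p|\ll p^{O(\lambda_p+\delta_p)}$ is far too weak to deliver $(\Delta\det\Lambda)^{\varepsilon}$: it only gives a fixed power. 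The paper instead uses Lemma~\ref{lem:rho} to obtain the \emph{polynomial} bound $\sum_{\ee}1/\rho(p^{\ee})\ll(\lambda_p+2\delta_p)^4+O(1)$, so that
\[
\prod_{p\mid\Delta\det\Lambda}|C_p|\ \ll\ \prod_{p\mid\Delta\det\Lambda}\bigl((\lambda_p+2\delta_p)^4+O(1)\bigr),
\]
which is essentially a fixed power of $\tau(\Delta^2\det\Lambda)$ and hence $\ll_{\varepsilon}(\Delta\det\Lambda)^{\varepsilon}$ by the standard divisor bound.
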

    \begin{proof}
    Let $\varepsilon>0$ and let $s \geq 5/6+\varepsilon$. Then by multiplicativity we have
    \begin{align*}
        \sum_{\kk} \frac{|\upsilon(\kk)|}{{(k_1k_2k_3k_4)^{s}}}
        &= \prod_p\left(\sum_{\substack{\ee \in \ZZ^4_{\geq0}}}
        \frac{|\upsilon(p^{e_1},p^{e_2},p^{e_3},p^{e_4})|}{p^{(e_1+e_2+e_3+e_4)s}}\right). \\
    \end{align*}
    However, when $p \nmid \Delta \det \Lambda$ and $0 < e_1+e_2+e_3+e_4 \leq 2$, Lemma
    \ref{lem:rho} implies that
    $\rho(p^{e_1},p^{e_2},p^{e_3},p^{e_4})=p^{e_1 + e_2 + e_3 + e_4}$
    and hence
    $\upsilon(p^{e_1},p^{e_2},p^{e_3},p^{e_4})=0$.
    It follows that the contribution from $p \nmid \Delta \det \Lambda$ is bounded above by
    \begin{align*}
         &\prod_{p \nmid \Delta \det \Lambda}\left(1 + \sum_{\substack{e \geq 3}}
        \frac{1}{p^{es}}\sum_{\substack{e_1+e_2+e_3+e_4=e}}
        \frac{e^4p^{e}}{\rho(p^{e_1},p^{e_2},p^{e_3},p^{e_4})}\right)\\
        &\ll \prod_p\left(1 + \sum_{\substack{e \geq 3}}
        \frac{e^8}{p^{e(s-1/2)}}\right) \\
        &\ll_{\varepsilon} \prod_p\left(1 + \frac{1}{p^{1 + \varepsilon}}\right) \ll_{\varepsilon} 1. \\
    \end{align*}
    Similarly, those primes $p | \Delta \det \Lambda$ contribute $\ll_{\varepsilon,\Delta, \det \Lambda} 1$.
    Next by the definition of $\upsilon$, for $\re(s)>5/6$ we have
    $$\Upsilon(s)=\prod_p\left(1 - \frac{1}{p^s}\right)^4
            \left(\sum_{\kk \in \ZZ_{\geq 0}^4}\frac{p^{(k_1+k_2+k_3+k_4)(1-s)}}{\rho(p^{k_1},p^{k_2},p^{k_3},p^{k_4})} \right).$$
    Thus the equality $\Upsilon(1)=\prod_p C_p$ is clear. To show the upper bound, by Lemma \ref{lem:rho}
    we have
    \begin{align*}
        \Upsilon(1) &\ll \prod_{p | \Delta \det \Lambda}\left(1 - \frac{1}{p}\right)^4
        \left(\sum_{\kk \in \ZZ_{\geq 0}^4}\frac{1}{\rho(p^{k_1},p^{k_2},p^{k_3},p^{k_4})} \right) \\
        &\ll \prod_{p | \Delta \det \Lambda}\left((\lambda_p + 2\delta_p)^4 + O(1)\right)\\
        &\ll_{\varepsilon} (\Delta \det \Lambda)^{\varepsilon}.
    \end{align*}
\end{proof}

\subsection{Proof of Theorem \ref{thm:resdivisor}}
In what follows all errors terms are
implicitly allowed to depend on the linear forms $L_1,L_2,L_3,L_4$, the number $r$ and the lattice $\Lambda$.
We begin by showing that we need only sum over the smaller divisors of the linear forms.

\begin{lemma}
For any $\varepsilon>0$ we have
$$S(X;V)=\sum_{\mm \in \{\pm 1\}^4}S^{\mm}(X;V) + O_\varepsilon(X^{3/2+\varepsilon}),$$
as $X \to \infty$, where
$$S^{\mm}(X;V) = \sum_{\xx \in \Lambda \cap X\regR}
 \#\left\{\dd \in \NN^4:
 \begin{array}{ll}
     d_i|L_i(\xx), d_i \leq \sqrt{L_i(\xx)} \\
     \DD^{\mm}(\ddelta,\xxi) \in V
 \end{array}\right\},$$
and
\begin{align*}
    \DD^{\mm}(\ddelta,\xxi)&=\mm\ddelta + (1-\mm)\xxi/2,
    \quad \xxi= \left(\frac{\log{L_i(\xx)}}{\log rX}\right)_{i=1,2,3,4}. \\
\end{align*}
\end{lemma}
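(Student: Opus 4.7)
The plan is to apply the hyperbola method separately in each of the four divisor variables. For every index $i\in\{1,2,3,4\}$ and every divisor $d_i$ of $L_i(\xx)$, exactly one of $d_i\leq\sqrt{L_i(\xx)}$ or $d_i>\sqrt{L_i(\xx)}$ holds. Multiplying the four resulting dichotomies together and expanding the product decomposes the inner divisor count into $2^4=16$ pieces, indexed by sign vectors $\mm\in\{\pm 1\}^4$, where $m_i=+1$ selects the ``small'' half and $m_i=-1$ the ``large'' half in the $i$-th coordinate.

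In each piece, for every $i$ with $m_i=-1$ we perform the involution $d_i\mapsto L_i(\xx)/d_i$ on the divisors of $L_i(\xx)$. This bijects the set $\{d_i\mid L_i(\xx): d_i>\sqrt{L_i(\xx)}\}$ onto $\{d_i\mid L_i(\xx): d_i<\sqrt{L_i(\xx)}\}$ and simultaneously transforms the $i$-th coordinate of $\ddelta$ from $(\log d_i)/\log rX$ into $(\log L_i(\xx)-\log d_i)/\log rX=\xi_i-\delta_i$, which is precisely the $i$-th component of the affine map $\DD^{\mm}(\ddelta,\xxi)$ when $m_i=-1$; the components with $m_i=+1$ remain $\delta_i$. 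Hence the piece indexed by $\mm$ equals the count of $\dd\in\NN^4$ satisfying $d_i\leq\sqrt{L_i(\xx)}$ (with strict inequality in the coordinates for which $m_i=-1$) together with $\DD^{\mm}(\ddelta,\xxi)\in V$. The resulting total differs from $\sum_{\mm}S^{\mm}(X;V)$ only on the boundary set where $d_i=\sqrt{L_i(\xx)}$ for some $i$ with $m_i=-1$.

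It then remains to bound this boundary overcounting. A configuration $d_i=\sqrt{L_i(\xx)}$ forces $L_i(\xx)$ to be a perfect square, so we bound the contribution by summing $\prod_{j\neq i}\tau(L_j(\xx))$ over $\xx\in\Lambda\cap X\regR$ with $L_i(\xx)$ a perfect square. The remaining divisor factors contribute $\ll_{\varepsilon,r}X^{\varepsilon}$ by the standard divisor bound, using $L_j(\xx)\leq rX$. The number of such $\xx$ is $\ll X^{3/2}$: for each integer $0\leq k\leq (rX)^{1/2}$ the equation $L_i(\xx)=k^2$ cuts $\Lambda$ in a coset of a one-dimensional sublattice, which contains $O(X)$ points inside the box $\max\{|x_1|,|x_2|\}\ll X$, and there are $O(X^{1/2})$ admissible values of $k$. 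Summing over the finitely many choices of $i$ and $\mm$ then produces the claimed error $O_\varepsilon(X^{3/2+\varepsilon})$.

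The main obstacle, though essentially routine, lies in tracking the region condition under the involutions: one must verify carefully that the substitution $d_i\mapsto L_i(\xx)/d_i$ in the ``large'' coordinates transforms the membership $\ddelta\in V$ exactly into $\DD^{\mm}(\ddelta,\xxi)\in V$, so that $S^{\mm}(X;V)$ indeed records the correct subsum. Everything else is a direct application of the hyperbola method combined with the elementary count of perfect-square values of a binary linear form.
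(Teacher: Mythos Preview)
Your proposal is correct and follows exactly the same route as the paper: apply the Dirichlet hyperbola trick in each of the four divisor variables, use the involution $d_i\mapsto L_i(\xx)/d_i$ on the ``large'' coordinates (which is precisely what produces the map $\DD^{\mm}$), and bound the boundary discrepancy by the contribution of $\xx$ for which some $L_i(\xx)$ is a perfect square, estimated via the divisor bound and the trivial count of lattice points on the lines $L_i(\xx)=k^2$. Your write-up is in fact more explicit than the paper's about why the region condition transforms as claimed and about the strict-versus-nonstrict inequality issue, but the argument is identical in substance.
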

\begin{proof}
    To get the main term we use a variant of the classical Dirichlet hyperbola method, namely if
    $d_i > \sqrt{L_i(\xx)}$, we replace $d_i$ by $L_i(\xx)/d_i$. The error term is then
    made up of those terms where $d_i = \sqrt{L_i(\xx)}$ for some $i=1,2,3,4$,
    each of which is handled in a similar manner. For
    example the contribution from where $L_4(\xx)$ is a square is
    \begin{align*}
        &\sum_{\substack{\xx \in \Lambda \cap X\regR \\ L_4(\xx)=\Box}}
        \tau(L_1(\xx))\tau(L_2(\xx))\tau(L_3(\xx)) \\
        &\ll_\varepsilon X^\varepsilon \sum_{n \leq \sqrt{X}}\#\{\xx \in \ZZ^2: ||\xx||\ll X, L_4(\xx)=n^2\}
         \ll_\varepsilon X^{3/2 + \varepsilon}.
    \end{align*}
\end{proof}
For now we consider fixed $\mm$.
After changing the order of summation, we have
$$S^{\mm}(X;V)= \sum_{d_i \leq r\sqrt{X}}\#( \xx \in \Lambda(\dd) \cap
\regR^{\mm}(\dd;X)),$$ where
$$
    \regR^{\mm}(\dd;X)= \{ \xx \in X\regR: d_i \leq
    \sqrt{L_i(\xx)}, \DD^{\mm}(\ddelta,\xxi) \in V \},
$$
and $\Lambda(\dd)$ is given by (\ref{def:rho}).
Large divisors will become problematic for us, so we sum over these separately. Write
$$ S^{\mm}_0(X;V)= \sum_{\substack{d_i \leq r\sqrt{X}  \\ d_4 \geq Y}}
    \#( \xx \in \Lambda(\dd) \cap \regR^{\mm}(\dd;X)), \quad
    S^{\mm}_1(X;V)=S^{\mm}(X;V)-S^{\mm}_0(X;V),
$$
where $Y \leq r\sqrt{X}$ is some parameter to be chosen later. We may handle
$S^{\mm}_1(X;V)$ with the following ``level of distribution" result.
\begin{lemma}
    Let $X \geq 1$ and $Q_1,Q_2,Q_3,Q_4 \geq 2$. Write $$Q= \max_i Q_i \text{ and } P=Q_1Q_2Q_3Q_4.$$ Then
    there is an absolute constant $A>0$ such that
    \begin{align*}
        \sum_{d_i \leq Q_i}& \left|\#(\Lambda(\dd) \cap \regR^{\mm}(\dd;X)) - \frac{\vol{\regR^{\mm}(\dd;X)}}{\det \Lambda}\right|
        \ll (XP^{1/2} + XQ + P)(\log Q)^A.
    \end{align*}
\end{lemma}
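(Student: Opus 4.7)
The plan is to apply a standard planar lattice point counting estimate term by term and then control the sum of the errors. For any sublattice $\Lambda' \subseteq \ZZ^2$ and any region $\regR'$ with rectifiable boundary one has
$$
\left|\#(\Lambda' \cap \regR') - \frac{\vol \regR'}{\det \Lambda'}\right| \ll 1 + \frac{|\partial \regR'|}{\lambda_1(\Lambda')},
$$
where $\lambda_1(\Lambda')$ denotes the first successive minimum. First I would check that $\regR^{\mm}(\dd;X)$ qualifies: it is obtained from $X\regR$ by intersecting with $O(1)$ half-spaces (four from the cutoffs $d_i \leq \sqrt{L_i(\xx)}$, plus $O(1)$ more from the hyperplanes defining $V$), each of which contributes at most $O(X)$ to the boundary. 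Combined with the hypothesis $|\partial X\regR| \ll rX$, this gives $|\partial \regR^{\mm}(\dd;X)| \ll X$ uniformly in $\dd$.

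Applying the lattice count with $\Lambda'=\Lambda(\dd)$ and $\regR'=\regR^{\mm}(\dd;X)$ and summing the $O(1)$ error over $d_i \leq Q_i$ yields a contribution of $\ll P$, which accounts for the last term of the claim. The main task is to bound
$$
E := \sum_{d_i \leq Q_i} \frac{X}{\lambda_1(\Lambda(\dd))}.
$$

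Here I would use the layer-cake identity $\sum_\dd 1/\lambda_1(\Lambda(\dd)) = \int_1^\infty N(T)/T^2\, dT$, where
$$
N(T) := \#\{\dd : d_i\leq Q_i,\ \lambda_1(\Lambda(\dd)) \leq T\}.
$$
By Minkowski's theorem, $\lambda_1(\Lambda(\dd))^2 \ll \det \Lambda(\dd) = \rho(\dd)\det \Lambda \ll P$, so the integral truncates at $T \ll P^{1/2}$. To bound $N(T)$, observe that $\lambda_1(\Lambda(\dd)) \leq T$ forces the existence of some nonzero $\mathbf{v} \in \Lambda$ with $|\mathbf{v}| \leq T$ satisfying $d_i \mid L_i(\mathbf{v})$ for every $i$, so
$$
N(T) \ll \sum_{\substack{\mathbf{v} \in \Lambda \setminus \{0\} \\ |\mathbf{v}| \leq T}} \prod_{i=1}^4 \tau(L_i(\mathbf{v})) \ll T^2 (\log T)^A,
$$
via a standard moment estimate for products of divisor functions at pairwise non-proportional binary linear forms (H\"older's inequality combined with single-form divisor bounds), after separately treating the $O(T)$ exceptional vectors $\mathbf{v}$ where some $L_i(\mathbf{v})=0$ (on these lines one uses the trivial bound $Q_i$ rather than $\tau(L_i(\mathbf{v}))$). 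Integrating then gives $E \ll X P^{1/2}(\log P)^A$, which is absorbed into the claimed bound $(XP^{1/2} + XQ)(\log Q)^A$.

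The hardest step will be the short-vector moment estimate: handling the vectors on the zero-sets $L_i = 0$ carefully so that their contribution does not spoil the bound, and keeping track of the explicit dependence of all constants on $L_1,\ldots,L_4$ and $\Lambda$. Everything else amounts to a routine application of planar lattice point counting combined with standard divisor-sum machinery.
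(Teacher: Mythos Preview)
Your approach is correct and is essentially the argument of Daniel and Heath--Brown that the paper invokes by citation: bound each summand via the planar lattice count, then control $\sum_{\dd} 1/\lambda_1(\Lambda(\dd))$ by pairing each $\dd$ with a shortest nonzero vector $\mathbf{v}\in\Lambda(\dd)$ and turning the sum into a divisor-type sum over $\mathbf{v}$. Your observation that the vectors on the lines $L_i=0$ produce exactly the $XQ(\log Q)^A$ term, while the generic vectors give $XP^{1/2}(\log Q)^A$, is precisely how those two pieces of the bound arise in \cite{Dan99} and \cite{HB03}; the paper adds only the remark that $|\partial\regR^{\mm}(\dd;X)|\ll rX$ uniformly, which you have also verified.
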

\begin{proof}
    This follows from \cite[Lem. 2.1]{HB03}, whose proof is a minor modification of the argument of Daniel \cite[Lem. 3.2]{Dan99}. Note that we work in slightly more generality than Heath-Brown, as our region $\regR^{\mm}(\dd;X)$
    is not neccessarily convex. However, the important point is that we still have the neccessary upper bound
    $|\partial \regR^{\mm}(\dd;X)| \ll rX$, uniformly with respect to $V$. Indeed, this follows from our simplifying
    assumption on our region that $|\partial X\regR| \ll rX$, and also the fact that $V$ is cut out by a bounded number of hyperplanes each with bounded coefficients.
\end{proof}
Hence if we take $Y= r\sqrt{X}/(\log X)^{2A}$, we deduce that
$$S^{\mm}_1(X;V) = \sum_{\substack{d_i \leq r\sqrt{X}\\d_4 \leq Y}
} \frac{\vol(\regR^{\mm}(\dd;X))}{\det \Lambda(\dd)} +
O(X^2).$$
We get an upper bound for $S^{\mm}_0(X;V)$ with the next lemma.
\begin{lemma}
    Let $X \geq 1$. Then we have
    $$S^{\mm}_0(X;V) \ll X^2(\log X)^3(\log \log X),$$
    as $X \to \infty$.
\end{lemma}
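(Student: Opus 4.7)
The plan is to interchange the order of summation so that $d_4$ runs on the outside, apply a three-fold divisor-sum estimate on the sublattice cut out by $d_4 \mid L_4(\xx)$, and then exploit the fact that the resulting harmonic tail is summed only over the very short range $[Y, r\sqrt{X}]$, which has multiplicative length $(\log X)^{2A}$.

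Concretely, for an upper bound I would discard the restrictions $\DD^{\mm}(\ddelta,\xxi) \in V$ and $d_i \leq \sqrt{L_i(\xx)}$ for $i = 1,2,3$, and bound the number of admissible triples $(d_1,d_2,d_3)$ attached to any $\xx$ by $\tau(L_1(\xx))\tau(L_2(\xx))\tau(L_3(\xx))$. Swapping orders then yields
$$
    S^{\mm}_0(X;V) \leq \sum_{Y \leq d_4 \leq r\sqrt{X}}\ \sum_{\substack{\xx \in \Lambda \cap X\regR \\ d_4 \mid L_4(\xx)}} \tau(L_1(\xx))\tau(L_2(\xx))\tau(L_3(\xx)).
$$
For each fixed $d_4$, the congruence $d_4\mid L_4(\xx)$ cuts $\Lambda$ down to the sublattice $\Lambda(1,1,1,d_4)$, whose index in $\Lambda$ is $\rho(1,1,1,d_4) \asymp d_4$ up to a factor depending only on the bad primes $p\mid\Delta\det\Lambda$ by Lemma \ref{lem:rho}. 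I would then invoke a three-fold divisor-sum estimate for three non-proportional binary linear forms on a sublattice, in the spirit of \cite{Bro11} and the preliminary lemmas of Section~3, to obtain
$$
    \sum_{\substack{\xx \in \Lambda \cap X\regR \\ d_4\mid L_4(\xx)}} \tau(L_1(\xx))\tau(L_2(\xx))\tau(L_3(\xx)) \ll \frac{X^2(\log X)^3}{d_4},
$$
with implied constant uniform in $d_4$.

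Combining this with the choice $Y = r\sqrt{X}/(\log X)^{2A}$, the harmonic tail sums trivially to
$$
    S^{\mm}_0(X;V) \ll X^2(\log X)^3 \sum_{Y \leq d_4 \leq r\sqrt{X}} \frac{1}{d_4} \ll X^2(\log X)^3 \log\bigl(r\sqrt{X}/Y\bigr) \ll X^2(\log X)^3 \log\log X,
$$
which is the required bound.

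The main obstacle lies in establishing the three-fold divisor estimate uniformly in $d_4$ with the clean scaling $1/d_4$, and in particular avoiding any parasitic factor $\tau(d_4)^C$ or $(\log d_4)^C$. Because the range $[Y, r\sqrt{X}]$ is only a multiplicative factor $(\log X)^{2A}$ wide, even a $(\log d_4)^C$ weight would turn the harmonic tail into $(\log X)^C \log\log X$, producing the unacceptable bound $X^2(\log X)^{3+C}\log\log X$. Avoiding this requires the genericity provided by the non-proportionality of $L_4$ with $L_1,L_2,L_3$: on the good primes $p \nmid \Delta\det\Lambda$ the trilinear sum on the sublattice factorises cleanly in $d_4$, and on the finitely many bad primes one absorbs the dependence into the implied constant, leaving only the generic $1/d_4$ factor.
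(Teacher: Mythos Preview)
Your proposal is correct and follows exactly the paper's route: swap the order of summation, bound the inner count by a threefold divisor sum on the sublattice $d_4\mid L_4(\xx)$, and exploit the short multiplicative length of $[Y,r\sqrt X]$.

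One clarification on the ``main obstacle'' you flag. The paper does \emph{not} obtain the clean scaling $1/d_4$ you aim for. Instead it makes the linear change of variables $x_2\mapsto \ell_4(\xx)$, so that the congruence becomes $d\mid y$ and the inner sum runs over a box of size $X\times X/d$; then \cite[Thm.~1]{BB06} gives
\[
\sum_{\substack{\xx\in\Lambda\cap X\regR\\ d\mid L_4(\xx)}}\tau(L_1(\xx))\tau(L_2(\xx))\tau(L_3(\xx))
\ \ll\ \frac{\psi(d)\,X^2(\log X)^3}{d},\qquad \psi(n)=\prod_{p\mid n}\Bigl(1+\tfrac1p\Bigr).
\]
The parasitic factor $\psi(d)$ is not removed; rather, since $\psi$ is bounded \emph{on average} (indeed $\psi(d)=\sum_{e\mid d}\mu^2(e)/e$, so $\sum_{d\le N}\psi(d)\sim cN$), partial summation still yields $\sum_{Y\le d\le r\sqrt X}\psi(d)/d\ll\log\log X$. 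So your worry about $(\log d_4)^C$-type losses is legitimate, but the resolution is to accept a factor that is $O(1)$ on average rather than to eliminate it pointwise.
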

\begin{proof}
    We begin by defining a kind of
    generalised divisor function, defined multiplicatively for any prime $p$ by
    $$
    \eth_3(p^a)=    \left\{\begin{array}{ll}
                        2,& \quad a=1,\\
                        (a+1)^{3},& \quad a \neq 1.
                    \end{array}\right.
    $$
    We will meet this function later on in a more general context in Section \ref{sec:Useful_results}. Notice that
    \begin{align*}
        S^{\mm}_0(X;V) &\ll \sum_{Y \leq d_4 \leq r\sqrt{X}}\sum_{\substack{\xx \in \Lambda \cap
        X\regR \\ d_4 | L_4(\xx)}}
        \tau(L_1(\xx))\tau(L_2(\xx))\tau(L_3(\xx)) \\
        &\ll \sum_{Y \leq d \leq r\sqrt{X}}\sum_{\substack{x \ll X\\ y \ll X/d}}
        \eth_3(\ell'_1(x,dy)\ell'_2(x,dy)\ell'_3(x,dy)),
    \end{align*}
    where $\ell'_i$ is the linear form obtained from $\ell_i$ by the change of variables $x_2 \mapsto \ell_4(\xx)$, for $i=1,2,3$. We now appeal to \cite[Thm. 1]{BB06}, which is a general result on upper bounds for sums of arithmetic functions taking values in binary forms. Let $\Delta'(d)$ denote the discriminant of the form $F'(x,y)=\ell'_1(x,dy)\ell'_2(x,dy)\ell'_3(x,dy)$, and let $\psi(d)=\prod_{p|d}(1 + 1/p)$. Then \cite[Thm. 1]{BB06} allows
    us to deduce that
    \begin{align*}
        S^{\mm}_0(X;V) &\ll \sum_{Y \leq d \leq r\sqrt{X}} \frac{\psi(\Delta'(d))X^2(\log X)^3}{d} \\
                     &\ll \sum_{Y \leq d \leq r\sqrt{X}} \frac{\psi(d)X^2(\log X)^3}{d}
                     \ll X^2(\log X)^3(\log \log X),
    \end{align*}
    as required.
\end{proof}
Hence we have
\begin{equation} \label{eqn:Sfinal}
    S(X;V) =\frac{1}{\det \Lambda}\sum_{\mm \in \{\pm 1\}^4}\sum_{\substack{d_i \leq r\sqrt{X}\\ d_4 \leq Y}
    } \frac{\vol(\regR^{\mm}(\dd;X))}{\rho(\dd)}
    + O(X^2(\log X)^{3}(\log \log X)),
\end{equation}
where $\rho$ is given by (\ref{def:rho}).
Note that
\begin{align*}
    \sum_{\substack{d_i \leq r\sqrt{X}\\ d_4 \leq Y}}\frac{\vol(\regR^{\mm}(\dd;X))}{\rho(\dd)}
    & = \sum_{\substack{d_i \leq r\sqrt{X}\\ d_4 \leq Y}} \frac{\sum_{k_i | d_i}
    \upsilon(\kk)\vol(\regR^{\mm}(\dd;X))}{d_1d_2d_3d_4} \\
    & = \sum_{\substack{k_i \leq r\sqrt{X}}} \frac{\upsilon(\kk)}{k_1k_2k_3k_4} E_{\kk}^{\mm}(X;V).
\end{align*}
Here $\upsilon$ is given by (\ref{def:upsilon}) and
$$E_{\kk}^{\mm}(X;V)= \sum_{\substack{e_i \leq r\sqrt{X}/k_i \\ e_4 \leq Y/k_4}}\frac{\vol(\regR^{\mm}(\mathbf{ek};X))}{e_1e_2e_3e_4},$$
where we write $\ee\kk=(e_1k_1,e_2k_2,e_3k_3,e_4k_4)$.
We handle this inner sum with the following lemma.

\begin{lemma} \label{lem:sumvolR}
    Let $\kk \in \NN^4$ be such that $1 \leq k_i \leq r\sqrt{X}$ for $i=1,2,3,4$. Then for any $\varepsilon >0$ we have
    \begin{align*}
        E_{\kk}^{\mm}(X;V) =&\frac{C_\infty}{2^4} X^2(\log X)^4\left(1+ O_\varepsilon\left(\frac{||\kk||^{\varepsilon}(\log \log X)}{\log X}\right)\right),
    \end{align*}
    where $C_\infty$ is as in Theorem \ref{thm:resdivisor}.
\end{lemma}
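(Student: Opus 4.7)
The plan is to unfold $\vol(\regR^\mm(\ee\kk;X))$ as an integral over $\xx \in X\regR$, interchange summation with integration, and then approximate the inner sum over $\ee$ by its continuous analogue. Since the integrand is nonnegative, I write
$$E_\kk^\mm(X;V) = \int_{\xx \in X\regR} \sum_{\substack{e_i \leq \sqrt{L_i(\xx)}/k_i \\ e_4 \leq Y/k_4}} \frac{\chi_V(\DD^\mm(\ddelta_{\ee\kk},\xxi))}{e_1e_2e_3e_4}\,\intd\xx,$$
using $L_i(\xx) \leq rX$ to absorb the outer constraint $e_i \leq r\sqrt X/k_i$ into $e_i \leq \sqrt{L_i(\xx)}/k_i$ (up to harmless factors of $\sqrt r$).

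The main technical step is to approximate the inner sum by the Riemann integral
$$\sum_\ee \frac{\chi_V(\DD^\mm(\ddelta_{\ee\kk},\xxi))}{\prod e_i} = (\log(rX))^4 \int_B \chi_V(\DD^\mm(v,\xxi))\,\intd v + O\bigl((\log(rX))^3\bigr),$$
via the substitution $v_i = \log(e_ik_i)/\log(rX)$ and iterated application of the one-variable estimate $\sum_{1\leq e \leq E} f(\log e/\log(rX))/e = \log(rX)\int_{0}^{\log(E)/\log(rX)}f(t)\,\intd t + O(\sup|f|)$. Here $B = \prod_i [a_i,\xi_i/2]$ with $a_i = \log(k_i)/\log(rX)$, truncated in the $v_4$-coordinate at $\log(Y)/\log(rX)$; the error $O((\log(rX))^3)$ comes from the one-dimensional boundary of $V$, which is of bounded complexity by the hypothesis that $V$ is cut out by a bounded number of hyperplanes with bounded coefficients. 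A further change of variables $w_i = \DD^\mm_i(v,\xxi)$, a coordinatewise reflection of Jacobian one, transports the inner integral to $\vol(V\cap W^\mm(\xx,\kk))$ for an explicit rectangle $W^\mm(\xx,\kk) \subset \RR^4$.

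It then remains to remove the artificial cap $e_4 \leq Y/k_4$ and to extract the leading $X^2$ asymptotic. Since $Y = r\sqrt X/(\log X)^{2A}$ gives $\log(Y)/\log(rX) = \tfrac12 - 2A\log\log X/\log(rX) + O(1/\log X)$, restoring the natural bound $v_4 \leq \xi_4/2$ contributes an error of $O(X^2(\log X)^3\log\log X)$ after integration over $\xx$, which is exactly the source of the $\log\log X$ factor in the claim. For the leading term, one splits $X\regR$ into the bulk region where each $L_i(\xx)$ has order $X$ (so $\xi_i(\xx)\to 1$ and $W^\mm(\xx,\kk)$ stabilises as $X\to\infty$), and the boundary region where some $L_i(\xx)$ is small, which is controlled by a dyadic decomposition in each $L_i$ together with the assumption $|\partial X\regR| \ll rX$. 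The main obstacle will be to keep the error terms uniform in $\kk$ as $\kk$ ranges up to $r\sqrt X$: when $k_i$ is close to $r\sqrt X$ the quantity $a_i$ approaches $1/2$ and the box $B$ becomes very narrow, so the Riemann-sum approximation loses efficiency. The slack $\|\kk\|^\varepsilon$ in the claimed error is precisely what is needed to absorb this loss, and combining the pieces above yields the leading constant $\tfrac{C_\infty}{2^4}X^2(\log X)^4$.
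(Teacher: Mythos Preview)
Your proposal is correct and follows essentially the same route as the paper: unfold the volume as an integral over $\xx\in X\regR$, replace the harmonic sum over $\ee$ by its integral analogue, change variables to expose $\vol V$, and identify the removal of the cap $e_4\le Y/k_4$ as the source of the $\log\log X$.

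The one noteworthy organisational difference is that the paper, \emph{before} performing Euler--Maclaurin, first replaces each $\xx$-dependent bound $e_i\le\sqrt{L_i(\xx)}/k_i$ (and the cap $e_4\le Y/k_4$) by the uniform bound $e_i\le\sqrt{rX}$, at the cost of the error $O_\varepsilon(\|\kk\|^\varepsilon X^2(\log X)^3\log\log X)$. This is done after first excising the thin strip $|x_1|,|x_2|\le rX/\log X$. The point is that $\sum_{\sqrt{L_i(\xx)}/k_i\le e_i\le\sqrt{rX}}1/e_i\ll\log k_i+\log\log X\ll_\varepsilon k_i^\varepsilon\log\log X$ once $\xx$ is in the bulk, which explains simultaneously the $\|\kk\|^\varepsilon$ and the $\log\log X$; the $e_4$-cap gives an extra $2A\log\log X$ in exactly the same way. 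After this step the summation domain is the fixed box $[1,\sqrt{rX}]^4$, so the Euler--Maclaurin step and the subsequent change of variables $\eeta=\DD^\mm(\eepsilon+\kkappa,\xxi)$ become entirely routine. Your ordering (Riemann-sum first, then remove caps and handle the boundary via a bulk/boundary split in $L_i(\xx)$) works too, but carries more bookkeeping through the approximation step; the paper's way is slightly cleaner and makes the $\|\kk\|^\varepsilon$ dependence more transparent.
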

\begin{proof}
    In what follows let
    \begin{align*}
        \eepsilon &= \left(\frac{\log e_i}{\log rX}\right)_{i=1,2,3,4},
        \quad \kkappa = \left(\frac{\log k_i}{\log rX}\right)_{i=1,2,3,4}.
    \end{align*}
    Then we have
    $$E_{\kk}^{\mm}(X;V)
    =\int_{\xx \in X\regR} \sum_{\substack{e_i \leq \sqrt{L_i(\xx)}/k_i \\ e_4 \leq Y/k_4
    \\ \DD^{\mm}(\eepsilon + \kkappa,\xxi) \in V}}
    \frac{\mathrm d \xx}{e_1e_2e_3e_4}.$$
    However, this simplifies to
    \begin{align*}
        E_{\kk}^{\mm}(X;V) &=\int_{\xx \in X\regR} \sum_{\substack{e_i \leq \sqrt{rX} \\ \DD^{\mm}(\eepsilon + \kkappa,\xxi) \in V}}
        \frac{\mathrm d \xx}{e_1e_2e_3e_4}
        + O_\varepsilon(||\kk||^{\varepsilon}X^2(\log X)^3(\log \log X)).
    \end{align*}
    Indeed, we may assume that $|x_1|,|x_2| \geq rX/\log X$ with a satisfactory error. Then the contribution from
    $\sqrt{L_i(\xx)}/k_i \leq e_i \leq \sqrt{rX}$ for $i=1,2,3$ is bounded above by the given error term. We may
    also handle $e_4$ in a similar manner.
    Performing Euler-Maclaurin summation, we find that
    $$
        E_{\kk}^{\mm}(X;V)=\frac{(\log rX)^4}{2^4}
        \int_{\substack{\eepsilon \in [0,1] \\\xx \in X\regR}}\chi_V(\DD^{\mm}(\eepsilon + \kkappa,\xxi)) \mathrm d \eepsilon \mathrm d \xx + O_\varepsilon\left(||\kk||^{\varepsilon}X^2(\log X)^3(\log \log X)\right),
    $$
    where $\chi_V$ is the characteristic function of $V$. On making the change of variables $\xx \mapsto \xx/X$ and
    $\eeta=\DD^{\mm}(\eepsilon + \kkappa,\xxi)=\mm(\eepsilon + \kkappa) +(1-\mm)\xxi/2$, we see that
    $$
        \int_{\substack{\eepsilon \in [0,1]\\ \xx \in X\regR}}\chi_V(\DD^{\mm}(\eepsilon + \kkappa,\xxi)) \mathrm d \eepsilon \mathrm d \xx =X^2\vol \regR \vol V + O_\varepsilon\left(\frac{||\kk||^{\varepsilon}X^2}{\log X}\right).
    $$
    However, by definition we have $C_{\infty}= \vol \regR \vol V$, thus proving the lemma.
\end{proof}
We are now in a position to finish the proof of Theorem \ref{thm:resdivisor}. First we sum over $\mm$ in
(\ref{eqn:Sfinal}), then use Lemma \ref{lem:upsilon} and Lemma \ref{lem:sumvolR} to deduce that
\begin{align*}
    S(X;V)&=\frac{C_\infty}{\det \Lambda}\sum_{\substack{k_i \leq r\sqrt{X}}} \frac{\upsilon(\kk)}{k_1k_2k_3k_4}X^2(\log X)^4
    \left(1 + O_\varepsilon\left(\frac{ ||\kk||^{\varepsilon}\log \log X}{\log X}\right)\right)\\
    &=\frac{C_\infty \prod_p C_p}{\det \Lambda}X^2(\log X)^4 + O\left(X^2(\log X)^3\log \log X \right),
\end{align*}
on choosing $\varepsilon =1/12$, say.

\subsection{Proof of Corollary \ref{cor:resdivisor}}
In what follows all error terms are implicitly allowed to depend on the linear forms, the number $r$ and the lattice $\Lambda$.
We first note that we have the identity
$$\frac{1}{\max \{x_1,x_2\}}=2\int_{\max \{x_1,x_2\}}^X \frac{\intd t}{t^3} + \frac{1}{X^2}.$$
Applying this we find
$$
S'(X;V') = \int_1^X \frac{S''(t;V')}{t^3} \intd t
+ O((\log X)^4)$$
where $S''(t;V')= \sum_{\xx \in \Lambda \cap t\regR}\tau'(L_1(\xx),L_2(\xx),L_3(\xx),L_4(\xx);V').$
In order to handle this sum using Theorem \ref{thm:resdivisor},
we need to remove the dependence on $\xx$. Our aim therefore is to replace the condition
$(\ddelta,(\log{\max\{x_1,x_2\}})/(\log rX)) \in V'$ by $(\ddelta,(\log{t})/(\log rX)) \in V'$.
To do this, for any $C \in \RR$ let
$$V_C(t)=\left\{\eeta \in [0,1]^4:\left(\eeta,1 + \frac{C\log \log t}{\log t}\right)
                 \in [0,1]^5 \cap \frac{\log rX}{\log rt}V'\right\}.$$
Then on noticing that we may assume that $|x_1|,|x_2| \geq rt/\log t$ with a suitable error,
we have the bounds
$$S(t;V_{-C}(t)) + O(t^2(\log t)^3)\leq S''(t;V') \leq S(t;V_{C}(t)) +  O(t^2(\log t)^3),$$
for some constant $C \geq 0$. However we clearly have $\vol V_{\pm C}(t)= \vol V_0(t) + O((\log \log t)/(\log t))$
for $t \gg 1$, hence
applying Theorem \ref{thm:resdivisor} we deduce that
\begin{align*}
S'(X;V')&=  \frac{\vol{\regR} \prod_p{C_p}}{\det \Lambda} \int_1^X  \frac{\vol V_{0}(t)(\log rt)^4}{t} \mathrm d t
+ O((\log X)^4 \log \log X).
\end{align*}
The proof of the corollary is then complete on noticing that
\begin{align*}
\int_1^X  \frac{\vol V_{0}(t)(\log rt)^4}{t} \mathrm d t
 &=  (\log rX)^4 \int_1^X  \frac{1}{t}
\int_{\eeta \in [1,\log rt/\log rX]^4} \chi_{V'}(\eeta,\log rt /\log rX) \mathrm d \eeta \mathrm d t \\
& = (\log X)^5  \int_{\substack{u \in [0,1] \\ \eeta \in [1,u]^4}} \chi_{V'}(\eeta,u) \mathrm d \eeta \mathrm d u + O((\log X)^4).\\
\end{align*}

\section{Useful results} \label{sec:Useful_results}
Before we begin the proof of Theorem \ref{thm:dp4asym}, we gather some technical results on lattice point counting
and upper bounds for certain divisor problems.
\subsection{Lattice point counting}
The emphasis on the results in this section is their \emph{uniformity} with respect to the chosen lattices and regions.
Our first result concerns counting \emph{non-zero} lattice points in planar domains. Before we state it, recall that
the first successive minima $\lambda_1$ of a lattice $\Lambda$ is defined to be the length of the shortest non-zero vector in $\Lambda$.

\begin{lemma} \label{plem:lattice}
    Let $X>0$. Let $\Lambda \subset \RR^2$ be a lattice with first successive minima $\lambda_1$ and
    suppose that $\regR \subset \RR^2$ is a region such that $(0,0) \in \regR$. Then
    $$\#\{ \xx \in \Lambda \cap X\regR: \xx \neq (0,0)\} = \frac{\vol{X\regR}}{\det{\Lambda}} +
    O\left(\frac{|\partial X\regR|}{\lambda_1}\right).$$
\end{lemma}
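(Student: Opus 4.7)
The plan is to apply a standard slicing argument from the geometry of numbers in order to reduce the lattice point count to a volume computation, and then to use the hypothesis $(0,0)\in\regR$ to absorb the usual additive $O(1)$ error into the stated bound $O(|\partial X\regR|/\lambda_1)$.

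First, choose a $\ZZ$-basis $\{v_1,v_2\}$ of $\Lambda$ with $\|v_1\|=\lambda_1$, via Minkowski-reduction. The lattice then decomposes as the disjoint union of parallel ``rows'' $\ell_k := kv_2 + \ZZ v_1$ for $k \in \ZZ$, with consecutive rows separated by orthogonal distance $h = \det\Lambda/\lambda_1$. The piecewise differentiability of $\partial\regR$ ensures that each line $\ell_k + \RR v_1$ meets $X\regR$ in a union of $O(1)$ intervals, and on an interval of length $L$ the number of lattice translates of $v_1$ it contains is $L/\lambda_1 + O(1)$. Summing the lengths over $k$ and multiplying by the row spacing $h$ recovers $\vol(X\regR)$, and so the leading term of the count is $\vol(X\regR)/\det\Lambda$.

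The error splits into two parts: (i) an $O(1)$ per contributing row, totalling $O(\#\{\text{rows meeting }X\regR\})$, and (ii) boundary-fluctuation from the per-interval counts. The number of contributing rows is bounded by the orthogonal width of $X\regR$ divided by $h$, which in turn is $\ll |\partial X\regR|/h = |\partial X\regR|\lambda_1/\det\Lambda \ll |\partial X\regR|/\lambda_1$, using Minkowski's second theorem in the form $\det\Lambda \gg \lambda_1^2$. A similar bookkeeping bounds the boundary fluctuation by $O(|\partial X\regR|/\lambda_1)$. Subtracting $1$ for the origin $(0,0)\in\Lambda\cap X\regR$ then gives the claimed formula up to an additive $O(1)$.

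The main obstacle is absorbing this residual $O(1)$ uniformly in $X$ and $\Lambda$. If $|\partial X\regR|\geq\lambda_1$, the absorption is immediate. Otherwise, the hypothesis $(0,0)\in\regR$ together with the bound $\mathrm{diam}(X\regR)\leq|\partial X\regR|$ (valid for a compact connected region with piecewise smooth boundary) forces $X\regR\subset B(0,|\partial X\regR|)$, a disk which contains no non-origin lattice vector. Hence the left-hand side is zero, while the planar isoperimetric estimate $\vol(X\regR)\ll|\partial X\regR|^2$ combined with $\det\Lambda\gg\lambda_1^2$ bounds the main term $\vol(X\regR)/\det\Lambda$ by $O(|\partial X\regR|^2/\lambda_1^2)\leq O(|\partial X\regR|/\lambda_1)$ in this regime, completing the argument.
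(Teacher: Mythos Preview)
Your proposal is correct and follows essentially the same route as the paper. The paper simply quotes the standard planar lattice-point estimate
\[
\#(\Lambda\cap X\regR)=\frac{\vol X\regR}{\det\Lambda}+O\!\left(\frac{|\partial X\regR|}{\lambda_1}+1\right)
\]
as ``well-known'' rather than re-deriving it by slicing, and then performs exactly your dichotomy: if $|\partial X\regR|\geq\lambda_1$ the $O(1)$ is absorbed, while if $|\partial X\regR|<\lambda_1$ the containment $X\regR\subset B(0,|\partial X\regR|)$ (the paper phrases this as ``geodesics in $\RR^2$ are lines'') forces the non-zero count to vanish and the main term is bounded by $O(|\partial X\regR|/\lambda_1)$ via $\vol X\regR\ll |\partial X\regR|^2$ and $\det\Lambda\gg\lambda_1^2$. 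One small caution in your slicing sketch: the claim that each line meets $X\regR$ in $O(1)$ intervals does not follow from piecewise differentiability alone; the standard estimate is obtained without this assumption, so you may as well quote it as the paper does.
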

\begin{proof}
    The well-known method of counting lattice points in planar domains yields the estimate
    \begin{equation}
         \#\{\xx \in \Lambda \cap X\regR\} = \frac{\vol{X\regR}}{\det \Lambda}
        + O\left(\frac{|\partial X\regR|}{\lambda_1}+1\right). \label{eqn:Danlattice}
    \end{equation}
    If $1 \ll |\partial X\regR|/\lambda_1$, then the proof of the lemma follows immediately from (\ref{eqn:Danlattice}). Otherwise,
    suppose that  $|\partial X\regR| < \lambda_1$ and let $r(X)=\sup_{\xx \in X\regR}{||\xx||}.$ Then since the geodesics
    in $\RR^2$ are exactly the lines, we see that $r(X) \leq |\partial X\regR| < \lambda_1$, and hence there are no
    non-zero lattice points in $X\regR$. So in order for the statement of the lemma to be true in this case,
    it suffices to show that the error term dominates the main term. However we have
    $$\frac{\vol{X\regR}}{\det{\Lambda}}
    \ll \frac{(r(X))^2}{\det{\Lambda}} \ll \left(\frac{|\partial X\regR|}{\lambda_1}\right)^2 \ll 1.$$
    Hence
    $$\frac{\vol{X\regR}}{\det{\Lambda}}
    \ll \sqrt{\frac{\vol{X\regR}}{\det{\Lambda}}} \ll \frac{r(X)}{\lambda_1} \ll \frac{|\partial X\regR|}{\lambda_1},$$
    as required.
\end{proof}
For the next result we assume that $\regR$ is a ``box". Namely, there are some $r_1,r_2\geq 0 $ such
that
$$\regR =\{\xx \in \RR^2:  0 \leq x_i \leq r_i, (i=1,2)\}.$$

\begin{lemma} \label{plem:boxlattice}
    Let $X>0$ and let $\regR$ be a box. Then for any lattice $\Lambda \subset \ZZ^2$ we have
    $$\#\{ \xx \in \Lambda \cap X\regR: (x_1,x_2)=1\} \ll \frac{\vol{X\regR}}{\det{\Lambda}}+1.$$
    Next assume that $\Lambda=\{\xx \in \ZZ^2 : q_1|x_1,q_2|x_2\}$ for some $q_1,q_2 \in \NN$.
    Then
    $$\#\{ \xx \in \Lambda \cap X\regR: x_1x_2 \neq0\} \ll \frac{\vol{X\regR}}{\det \Lambda}.$$
\end{lemma}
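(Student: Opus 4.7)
I plan to prove the two bounds separately, each by exploiting an explicit description of the lattice. For the second assertion the argument is direct: since $\Lambda = q_1\ZZ \oplus q_2\ZZ$, every lattice point takes the form $\xx = (q_1 m, q_2 n)$, and the conditions $\xx \in X\regR$ and $x_1 x_2 \neq 0$ reduce to $m \in [1, Xr_1/q_1]$ and $n \in [1, Xr_2/q_2]$. Hence the count is at most $\lfloor Xr_1/q_1 \rfloor \lfloor Xr_2/q_2 \rfloor \leq X^2 r_1 r_2/(q_1 q_2) = \vol(X\regR)/\det\Lambda$, as required.

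For the first assertion, the plan is to fix a Hermite normal form basis of $\Lambda$, writing it as $\mathbf{v}_1 = (a, 0)$, $\mathbf{v}_2 = (c, b)$ with $a, b \geq 1$, $0 \leq c < a$ and $\det\Lambda = ab$. A typical lattice point is then $\xx = (ma + nc, nb)$, so membership in $X\regR$ becomes $n \in [0, Xr_2/b]$ together with $m$ in an interval of length $Xr_1/a$ depending on $n$. The argument splits into cases according to how $Xr_1$, $Xr_2$ compare to $a$, $b$. When $Xr_1 \geq a$ and $Xr_2 \geq b$, the trivial upper bound $(Xr_1/a + 1)(Xr_2/b + 1) \ll \vol(X\regR)/\det\Lambda$ already suffices, and primitivity is not needed. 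When $Xr_1 < a$ and $Xr_2 < b$, the box contains only $O(1)$ lattice points and hence $O(1)$ primitive ones.

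The essential case is when $Xr_1 < a$ and $Xr_2 \geq b$ (the opposite being symmetric by swapping coordinates). Then for each $n \in [1, Xr_2/b]$ there is at most one valid $m$, its existence being controlled by the residue $r_n := nc \bmod a$ lying in $[0, Xr_1]$. Writing $g = \gcd(a, c)$, so that $r_n$ is always a multiple of $g$, a periodicity argument yields a lattice-point bound of $\vol(X\regR)/\det\Lambda + O(Xr_1/g)$. The primitivity condition $\gcd(x_1, x_2) = 1$ must then be invoked to eliminate the boundary term: since $g$ divides $x_1 = r_n$, primitivity forces $\gcd(g, nb) = 1$, and further $\gcd(r_n, n) = 1$, which combined with the explicit residue structure brings the incomplete-period contribution down to $O(1)$. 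Primitive vectors with $x_1 x_2 = 0$ are only $(\pm 1, 0)$ or $(0, \pm 1)$, contributing a further $O(1)$.

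The principal obstacle is precisely this last case: combining the periodic distribution of $r_n$ modulo $a$ with the primitivity constraint so that the apparent boundary contribution $O(Xr_1/g)$ collapses to $O(1)$. All other cases reduce to elementary lattice-point counting.
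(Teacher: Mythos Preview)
Your treatment of the second assertion is correct and matches the paper's one-line argument. For the first assertion, the paper simply encloses the box in an origin-centred ellipse of comparable area and invokes \cite[Lem.~2]{HB84}, which bounds the number of $\ZZ^2$-primitive points of any sublattice $\Lambda\subset\ZZ^2$ inside such an ellipse by $O(\text{area}/\det\Lambda+1)$. That result rests on the observation that two primitive vectors of $\ZZ^2$ which are linearly dependent must be $\pm$ of one another, so either all primitive points lie on one line through the origin (giving at most two), or some pair is linearly independent and then the usual lattice-point count applies. Your Hermite-normal-form approach is a reasonable alternative route, and your easy cases ($Xr_1\geq a,\,Xr_2\geq b$; both small; and the ``opposite'' case $Xr_1\geq a,\,Xr_2<b$, which forces $n=0$, $x_2=0$ and hence at most one primitive point directly, not by the symmetry you claim) are fine.

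The gap is in the essential case $Xr_1<a$, $Xr_2\geq b$. Your assertion that primitivity collapses the incomplete-period contribution from $O(Xr_1/g)$ to $O(1)$ is not correct as stated. Take $a=100$, $b=1$, $c=7$ (so $g=1$) and the box $[0,20]\times[0,50]$. Here $N=50<a/g=100$, so the entire range is a single incomplete period, yet the primitive points are $(7,1),(19,17),(3,29),(17,31),(1,43)$ --- five of them, and one can make this number grow by enlarging the box. The lemma still holds in such examples because $\vol(X\regR)/\det\Lambda$ is already large enough, but your mechanism (``$\gcd(g,nb)=1$ and $\gcd(r_n,n)=1$ force $O(1)$ points in the tail'') does not establish this; when $g=1$ these conditions are nearly vacuous. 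What is really needed is a global argument linking the box to $\det\Lambda$, which is exactly what the ellipse/successive-minima approach of \cite{HB84} provides. You should either appeal to that lemma (as the paper does) or rework the critical case using a reduced basis rather than the Hermite form.
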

\begin{proof}
    The first part of the lemma follows from \cite[Lem. 2]{HB84}, after bounding $\regR$ by a suitable ellipse.
    The second part is simple as the number of lattice points in question is clearly bounded above by $X^2r_1r_2/q_1q_2.$
\end{proof}

We finish with a result of Browning and Heath-Brown \cite[Cor. 2]{BHB07} on uniform upper bounds
for the number of points on conics.

\begin{lemma}\label{plem:conic}
Let $C$ be a non-singular ternary quadratic form. Let $\Delta$ denote
the determinant of the associated matrix, and let
$\Delta_0$ be the greatest common divisor of the $2 \times 2$ minors. Then we have
$$\#\left\{
    \xx \in \ZZ^3:
    \begin{array}{ll}
        C(\xx)=0, (x_1,x_2,x_3)=1 \\
        |x_i|\leq B_i, (i=1,2,3)
    \end{array} \right\}
    \ll \tau(|\Delta|)\left(1 + \frac{B_1B_2B_3 \Delta_0^{3/2}}{|\Delta|}\right)^{1/3}$$
\end{lemma}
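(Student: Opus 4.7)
The plan is to reduce counting integer points on the conic to a binary-form counting problem via a parametrization by $\PP^1$. If $C$ has no primitive integer zero, the left-hand side is zero and the bound holds trivially. Otherwise, using a Mordell-type bound on small zeros of ternary quadratic forms, I would first produce a primitive solution $\yy_0$ of controlled size. Projecting from the rational point $[\yy_0]$ gives a birational isomorphism of $\{C=0\}$ with $\PP^1$, and pulling back homogeneous coordinates yields a parametrization $\xx = \Phi(s,t)$, where $\Phi=(\Phi_1,\Phi_2,\Phi_3)$ is a triple of integral binary quadratic forms. Each primitive solution $\xx$ is hit by at most two coprime pairs $(s,t)$, up to the subtlety that $\gcd(\Phi(s,t))$ may be nontrivial.

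Next I would relate the invariants of the parametrization to $\Delta$ and $\Delta_0$. Classical invariant theory of ternary quadratic forms gives that the discriminant of the binary sextic $F=\Phi_1\Phi_2\Phi_3$ is, up to harmless factors, a fixed power of $\Delta$, while the common content of the $\Phi_i$ is essentially $\Delta_0$. Under the parametrization, the box $\{|x_i|\le B_i\}$ pulls back to the region $\{(s,t):|\Phi_i(s,t)|\le B_i\}$, whose geometric size is controlled via the AM--GM-type bound $|F(s,t)|\le B_1B_2B_3$.

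For the counting step itself, I would apply the well-known estimate that the number of primitive integer pairs $(s,t)$ with $|F(s,t)|\le Z$, for a binary form $F$ of degree six with nonvanishing discriminant, is $O((Z/c)^{1/3})$, where $c$ is an explicit content-like quantity. After the bookkeeping with $\Delta$ and $\Delta_0$ this produces the shape $(1+B_1B_2B_3\Delta_0^{3/2}/|\Delta|)^{1/3}$; the additive $1$ handles the degenerate range where the box is so small that at most a single solution is possible. The factor $\tau(|\Delta|)$ then enters from summing over the possible values of $d=\gcd(\Phi(s,t))$, since $d$ is forced to divide a fixed power of $\Delta$, together with the residue classes one must consider for each $d$ when removing the content.

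The main obstacle is keeping the dependence on $\Delta_0$ sharp: when $\Delta_0>1$ the projective geometry of the embedded conic is distorted, and the clean power $\Delta_0^{3/2}$ appears only after a careful local analysis at each prime $p\mid\Delta$ of the possible shapes of $C$ modulo $p$. The uniform exponent $1/3$ comes from the sextic nature of $F=\Phi_1\Phi_2\Phi_3$; bounding each $|\Phi_i(s,t)|\le B_i$ separately by quadratic-form estimates would lose the symmetric dependence on $B_1B_2B_3$, so it is essential to treat the three forms simultaneously through $F$.
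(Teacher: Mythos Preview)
The paper does not prove this lemma; it is quoted as Corollary~2 of Browning--Heath-Brown~\cite{BHB07}, so there is nothing in the paper to compare against beyond the citation.

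Your outline --- parametrise the conic by binary quadratics $\Phi_1,\Phi_2,\Phi_3$, control the possible values of $\gcd(\Phi(s,t))$ through divisors of $\Delta$ to produce the factor $\tau(|\Delta|)$, then count $(s,t)$ --- is indeed the shape of the argument in \cite{BHB07}. The gap is in the counting step. There is no general estimate of the form ``the number of primitive $(s,t)$ with $|F(s,t)|\le Z$ is $O((Z/c)^{1/3})$'' for a binary sextic $F$ with nonzero discriminant: as soon as $F$ has a real root the set $\{|F|\le Z\}\subset\RR^2$ is unbounded, and here $F=\Phi_1\Phi_2\Phi_3$ will typically have real roots whenever the conic meets a coordinate hyperplane over $\RR$. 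By replacing the three separate constraints $|\Phi_i(s,t)|\le B_i$ with the single constraint on their product you discard exactly the information that makes the region bounded, so this step cannot be repaired by adjusting the constant $c$.

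What is actually used is that $\Phi_1,\Phi_2,\Phi_3$ are linearly independent (they define an embedding $\PP^1\hookrightarrow\PP^2$), so $s^2,st,t^2$ are linear combinations of them. The three height conditions therefore confine $(s,t)$ to a genuinely bounded region of $\RR^2$, and one estimates the number of lattice points in that region directly. The exponent $1/3$ and the factor $\Delta_0^{3/2}/|\Delta|$ emerge from bounding the area of this region together with the determinants of the sublattices arising from the gcd analysis, not from any estimate attached to a single sextic form.
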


\subsection{Divisor problems}
In this section we gather numerous results on upper bounds for certain divisor sums in two variables.
For any $k \in \NN$, we shall be interested in the following generalised divisor function, defined multiplicatively
for any prime $p$ by
\begin{equation} \label{def:eth}
    \eth_k(p^a)=    \left\{\begin{array}{ll}
                        2,& \quad a=1,\\
                        (a+1)^{k},& \quad a \neq 1.
                    \end{array}\right.
\end{equation}
We list the following simple properties of $\eth_k$ to clarify the relationship between it and the usual divisor function $\tau$.
\begin{enumerate}[(a)]
    \item $\tau=\eth_1$.
    \item $\tau(n)\leq \eth_k(n)$ for any $n,k \in \NN$.
    \item $\tau(a)\tau(b) \leq \eth_{2}(ab)$ for any $a,b \in \NN$.
    \item $\tau$ and $\eth_k$ have the same average order for any $k \in \NN$.
\end{enumerate}
Our first result will be the basis of all following upper bounds on divisor sums. It follows from the general
work \cite{BB06}, where they consider sums of suitable arithmetic functions over binary forms.

\begin{lemma}\label{plem:TBdivisor}
    Let $0 <X_1,X_2\leq X$ and let $F \in \ZZ[x_1,x_2]$ be a non-singular quartic binary form
    that is completely reducible over $\ZZ$. Then for any $n,k \in \NN$ and $\varepsilon >0$ we have
    $$\sum_{\substack{a\leq X_1 \\ b \leq X_2}} \eth^n_{k}(|F(a,b)|) \ll_{\varepsilon,k,n}
    ||F||^{\varepsilon}(X_1X_2(\log X)^{4(2^n-1)} + \max\{X_1,X_2\}^{1+\varepsilon}),$$
    where $||F||$ denotes the maximum absolute value of the coefficients of $F$.
\end{lemma}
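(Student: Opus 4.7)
The strategy is to apply the general divisor-sum bound \cite[Thm. 1]{BB06} to the multiplicative function $h = \eth_k^n$ on values of the binary quartic $F$. I first verify the hypotheses. From the definition \eqref{def:eth} the function $\eth_k$, and hence $h = \eth_k^n$, is multiplicative, and satisfies $h(p) = 2^n$ and $h(p^a) = (a+1)^{kn}$ for $a \geq 2$. In particular, for every $\alpha > 0$ one has $h(p^a) \ll_{k,n,\alpha} (1 + \alpha)^a$, which places $h$ in the class of admissible arithmetic functions handled by \cite[Thm. 1]{BB06}.

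By hypothesis $F$ factors over $\ZZ$ as $F = L_1 L_2 L_3 L_4$ with pairwise non-proportional linear factors. For any prime $p \nmid \disc F$, inclusion–exclusion on the four lines $L_i \equiv 0 \bmod p$ yields
$$\rho_F(p) := \#\{\xx \bmod p : p \mid F(\xx)\} = 4p - 3,$$
so $\rho_F(p)/p^2 = 4/p + O(1/p^2)$, and the higher-power densities behave tamely by Hensel lifting. The main term delivered by \cite[Thm. 1]{BB06} is $X_1 X_2$ times an Euler product whose local factor at a good prime equals
$$\left(1 - \frac{1}{p}\right)^{4} \sum_{r \geq 0} \frac{h(p^r)\rho_F(p^r)}{p^{2r}} = \left(1 - \frac{1}{p}\right)^{4}\left(1 + \frac{4 \cdot 2^n}{p} + O\!\left(\frac{1}{p^2}\right)\right) = 1 + \frac{4(2^n - 1)}{p} + O\!\left(\frac{1}{p^2}\right).$$
Mertens' theorem then converts the product over $p \leq X$ into $(\log X)^{4(2^n - 1)}$, which is the desired power of the logarithm.

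It remains to absorb the finitely many bad primes $p \mid \disc F$ and the boundary error coming from \cite[Thm. 1]{BB06}. Each bad-prime local factor is $O_{k,n}(1)$, since the non-singularity of $F$ still gives a uniform bound $\rho_F(p^r) \ll_n p^r$, and the number of bad primes is $O(\log \|F\|)$, so the product over these primes contributes at most $\ll_{\varepsilon,k,n} \|F\|^\varepsilon$. The remaining error term from \cite[Thm. 1]{BB06} is of exactly the shape $\|F\|^{\varepsilon}\max\{X_1,X_2\}^{1+\varepsilon}$ appearing in the statement. The principal obstacle is the Mertens-style matching of the Euler product produced by \cite[Thm. 1]{BB06} to the expected exponent $4(2^n-1)$ of $\log X$, together with the verification that the dependence on the coefficients of $F$ is no worse than $\|F\|^\varepsilon$; once these two bookkeeping points are handled, the lemma follows by a direct invocation of the theorem.
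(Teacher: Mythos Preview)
Your approach is essentially the same as the paper's: both invoke the machinery of \cite{BB06} applied to $h=\eth_k^n$, and both arrive at the exponent $4(2^n-1)$ by the Mertens calculation $h(p)-1=2^n-1$ contributing once for each of the four linear factors. The paper, however, quotes \cite[Cor.~1]{BB06} rather than Theorem~1, after first writing $F=f\,x_1^{d_1}x_2^{d_2}G$ with $G$ primitive and $G(1,0)G(0,1)\neq 0$. That corollary outputs directly the bound $\|F\|^{\varepsilon}(X_1X_2\,E+\max\{X_1,X_2\}^{1+\varepsilon})$ with
\[
E=\prod_{p\le\min\{X_1,X_2\}}\Bigl(1+\tfrac{\varrho^*_G(p)(h(p)-1)}{p}\Bigr)\prod_{i=1,2}\prod_{p\le X_i}\Bigl(1+\tfrac{d_i(h(p)-1)}{p}\Bigr),
\]
so the $\|F\|^{\varepsilon}$ and the bad primes are already absorbed, and one only has to bound $\varrho^*_G(p)\le(4-d_1-d_2)p/(p-1)$ to recover $E\ll(\log X)^{4(2^n-1)}$.

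Your more direct route via the full Euler product is fine in spirit, but the bad-prime bookkeeping is a little loose: the assertion that each bad local factor is $O_{k,n}(1)$ fails if $p$ divides the content of $F$ (then $\rho_F(p^r)=p^{2r}$ for small $r$, not $O(p^r)$), so you should either pull out the content first, as the paper does, or argue more carefully that the product of the bad local factors is nonetheless $\ll_{\varepsilon,k,n}\|F\|^{\varepsilon}$. Either way the gap is cosmetic, and the substance of your proof matches the paper's.
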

\begin{proof}
    Let $F(x_1,x_2)=fx_1^{d_1}x_2^{d_2}G(x_1,x_2)$ where $f \in \ZZ$ and $G(x_1,x_2)$ is a primitive binary form
    with $G(1,0)G(0,1) \neq 0$. Then, \cite[Cor. 1]{BB06} implies that

    $$\sum_{\substack{a\leq X_1 \\ b \leq X_2}} \eth^n_{k}(|F(a,b)|)
    \ll_{\varepsilon,n,k} ||F||^{\varepsilon}\left(X_1X_2 E + \max\{X_1,X_2\}^{1 + \varepsilon}\right),$$
    where
    $$E=\prod_{p\leq \min\{X_1,X_2\}}
    \left(1 + \frac{\varrho^*_G(p)(\eth^n_k(p)-1)}{p}\right)\prod_{i=1,2}\prod_{p\leq
    X_i}\left(1 + \frac{d_i(\eth^n_k(p)-1)}{p}\right),$$
    and
    $$\varrho^*_G(m)=\frac{1}{\varphi(m)}
    \#\left\{
    \begin{array}{ll}
        (a,b) \in (0,m]^2:
    \end{array}
    \begin{array}{ll}
        &(a,b,m)=1 \\
        &G(a,b)\equiv 0\pmod m
    \end{array} \right\}.$$
    However, for any prime $p$ we have
    $$\varrho^*_G(p) \leq \frac{\# \{0 <a,b \leq p : G(a,b) \equiv 0 \pmod p\}}{p-1}
    \leq \frac{(4-d_1-d_2)p}{p-1},$$
    which implies that
    \begin{align*}
        E &\ll \prod_{p\leq X}\left(1 +
        \frac{4(2^n-1)}{p}\right) \ll(\log X)^{4(2^n-1)},
    \end{align*}
    as required.
\end{proof}

The next lemma handles the case of summing over more general regions than boxes.

\begin{lemma}\label{plem:divisor}
    Let $z_1,z_2,X>0$ and let $F \in \ZZ[x_1,x_2]$ be a non-singular quartic binary form
    that is completely reducible over $\ZZ$. Then for any $n,k \in \NN$ and $\varepsilon >0$ we have
    $$\sum_{\substack{a,b>0 \\ \max\{az_1,(b-a)z_2\} \leq b \leq X}} \eth^n_k(|F(a,b)|) \ll_{\varepsilon,n,k}
    \frac{||F||^{\varepsilon}X^2(\log X)^{4(2^n-1)}}{(z_1z_2)^{1-\varepsilon}}.$$
    Next suppose that $1 \leq y_1,y_2 \leq \log X$. Then for any $p,q,r\geq0$
    such that $p+q+r=2$ and $q>1$ we have
    \begin{align*}
    &\sum_{\substack{a,b \leq X \\ \max\{ay_1,(b-a)y_2\} < b}} \frac{\eth^n_k(|F(a,b)|)}{a^pb^q(b-a)^r}\ll_{\varepsilon,n,k}
    \frac{||F||^{\varepsilon}(\log X)^{4(2^n-1)+1}}{(y_1y_2)^{q-1}}. \\
    \end{align*}
\end{lemma}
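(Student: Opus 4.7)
The plan is to reduce both estimates to Lemma~\ref{plem:TBdivisor} via the substitution $c = b - a$, under which $F(a,b)$ becomes $\tilde F(a,c) := F(a,a+c)$. This new form is still a non-singular completely reducible binary quartic over $\ZZ$, and $||\tilde F|| \ll ||F||$, so applying Lemma~\ref{plem:TBdivisor} to $\tilde F$ costs only an $||F||^\varepsilon$ factor in the implied constants.

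For the first estimate, the constraints $\max\{az_1,(b-a)z_2\} \leq b \leq X$ translate to $1 \leq a \leq X/z_1$ and $1 \leq c \leq X/z_2$, so I would apply Lemma~\ref{plem:TBdivisor} directly to the box $[0,X/z_1] \times [0,X/z_2]$. This yields an upper bound of the shape
\[
||F||^\varepsilon \left(\frac{X^2(\log X)^{4(2^n-1)}}{z_1 z_2} + \frac{X^{1+\varepsilon}}{\min\{z_1,z_2\}^{1+\varepsilon}}\right).
\]
The main term already meets the target since $z_1 z_2 \geq 1$, and the error term is absorbed by observing that the sum is empty unless $z_1, z_2 \leq X$: assuming without loss of generality $z_1 \leq z_2$, a short calculation gives $(z_1 z_2)^{1-\varepsilon} \leq X^{1-\varepsilon} z_1^{1+\varepsilon}$, which shows the error is dominated by $X^2/(z_1 z_2)^{1-\varepsilon}$.

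For the second estimate the weight $1/(a^p b^q (b-a)^r)$ prevents a single application of Lemma~\ref{plem:TBdivisor}, so I would dyadically decompose as $a \in [A,2A)$, $c \in [C,2C)$ for dyadic $A, C \geq 1$, noting that $b \asymp B := A+C$ and that the weight is then $\asymp 1/(A^p B^q C^r)$. Lemma~\ref{plem:TBdivisor} bounds the inner divisor sum on each dyadic box by $||F||^\varepsilon (AC(\log X)^{4(2^n-1)} + \max\{A,C\}^{1+\varepsilon})$, while the constraints coming from $y_1, y_2$ force $A \ll B/y_1$ and $C \ll B/y_2$; in particular, it is impossible for both $y_1$ and $y_2$ to be large, so one of them is always $\ll 1$.

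The main obstacle is then to sum the weighted dyadic contributions and recover the declared decay $(y_1 y_2)^{1-q}$. The hypothesis $p+q+r=2$ with $q>1$ forces $p, r<1$, which is essential: the dyadic geometric series $\sum_{A \leq M} A^{1-p} \asymp M^{1-p}$ grows with $M$, so for each dyadic $B$ the dominant contribution comes from configurations where either $A \asymp B$ (possible only when $y_1 \ll 1$) or $C \asymp B$ (possible only when $y_2 \ll 1$). Each such configuration contributes $\ll B^{2-p-q-r} \max\{y_1^{p-1}, y_2^{r-1}\} = \max\{y_1^{p-1}, y_2^{r-1}\}$, and summing over the $O(\log X)$ dyadic scales of $B \leq X$ produces the extra $\log X$ factor. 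The identities $1-p = (q-1)+r$ and $1-r = (q-1)+p$, combined with the fact that one of $y_1, y_2$ is always $\ll 1$, then yield $\max\{y_1^{p-1}, y_2^{r-1}\} \ll (y_1 y_2)^{1-q}$. Finally, the error contribution $\sum \max\{A,C\}^{1+\varepsilon}/(A^p B^q C^r)$ converges absolutely (using $p+q+r=2$ and $p, r<1$) and is absorbed into the main term.
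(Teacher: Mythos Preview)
Your argument is correct and takes a somewhat more direct route than the paper. The paper splits both sums into the ranges $2a\le b$ and $b\le 2a$; in the first range it dyadically decomposes in $(a,b)$, and in the second it substitutes $c=b-a$ and reduces to the first range. You instead pass to $(a,c)$-coordinates from the outset. For the first estimate this lets you invoke Lemma~\ref{plem:TBdivisor} on the single box $[1,X/z_1]\times[1,X/z_2]$ with no dyadic summation at all, which is cleaner; for the second estimate you decompose dyadically in $(A,C)$ rather than in $(A,B)$ case by case, which requires a bit more bookkeeping when summing the pieces (your observation that $p,r<1$ makes the geometric sums behave, and that one of $y_1,y_2$ must be $O(1)$, is exactly what is needed). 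Both proofs rest on Lemma~\ref{plem:TBdivisor} as the only real input.

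One remark: your sentence ``the main term already meets the target since $z_1z_2\ge 1$'' tacitly assumes $z_1,z_2\ge 1$, which the lemma as stated does not require (it only says $z_1,z_2>0$). In fact the claimed bound cannot hold uniformly for arbitrarily small $z_i$, and the paper's own proof makes the same implicit assumption at the step where it passes from $X^2/z_1$ to $X^2/(z_1z_2)^{1-\varepsilon}$; every application of the lemma in the paper has $z_i\ge 1$. Under that hypothesis the condition $az_1\le b$ forces $a<b$, so $c\ge 1$ and your box is legitimate, and both your main-term and error-term calculations go through exactly as written.
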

\begin{proof}
    Throughout the proof, we suppress the dependence of the implied constant on $\varepsilon,n$ and $k$.
    In order to prove the first part of the lemma, we may assume that $z_1\leq X$ and $z_2 \leq X$, since otherwise the sum vanishes
    and the upper bound is clearly sufficient. We also emphasise that $a$ may be larger than $X$ in the case where $z_1<1$.
    We split the summation up into two cases, beginning with the case where $2a \leq b$. Here we may assume that $z_2 \leq 2$
    and hence $1 \ll z_2^{\varepsilon - 1}$, since again otherwise the sum will vanish.
    Summing over dyadic intervals and using Lemma \ref{plem:TBdivisor} gives
    \begin{align*}
        \sum_{\substack{\max\{az_1,2a\}\leq b \leq X }} \eth^n_k(|F(a,b)|)
        &\ll \frac{1}{z_2^{1-\varepsilon}}\sum_{\substack{\max\{Az_1,A\}<B \leq X\\ A,B \in 2^{\NN}}}
        \sum_{\substack{a \asymp A \\ b \asymp B}}\eth^n_k(|F(a,b)|)\\
        &\ll \frac{||F||^{\varepsilon}}{z_2^{1-\varepsilon}}
        \sum_{\substack{\max\{Az_1,A\}<B \leq X\\ A,B \in 2^{\NN}}}\left(AB(\log X)^{4(2^n-1)} + B^{1+\varepsilon}\right)\\
        &\ll \frac{||F||^{\varepsilon}(\log X)^{4(2^n-1)}}{z_2^{1-\varepsilon}}
        \left(\frac{X^2}{z_1} + X^{1+\varepsilon}\right)\\
        &\ll \frac{||F||^{\varepsilon}X^2(\log X)^{4(2^n-1)}}{(z_1z_2)^{1-\varepsilon}},
    \end{align*}
    since $z_1 \leq X$. For the case $b \leq 2a$, we again note that the sum vanishes unless $z_1 \leq 2$. Hence we have
    \begin{align*}
        \sum_{\substack{b \leq X\\ b \leq 2a \\ (b-a)z_2 \leq b}} \eth^n_k(|F(a,b)|)
        &\ll \sum_{\substack{ \max\{cz_2,2c\} \leq b \leq X}} \eth^n_k(|F(b-c,b)|) \\
        &\ll \frac{||F||^{\varepsilon}X^2(\log X)^{4(2^n-1)}}{(z_1z_2)^{1-\varepsilon}},\\
    \end{align*}
    as above.
    The proof of the second part of the lemma is very similar. When $2a \leq b$ we have
    \begin{align*}
        \sum_{\substack{a,b \leq X \\\max\{ay_1,2a\}\leq b \leq X }} \frac{\eth^n_k(|F(a,b)|)}{a^pb^q(b-a)^r}
        &\ll \frac{1}{y_2^{q-1}}
        \sum_{\substack{Ay_1<B \leq X\\A,B \in 2^{\NN}}}\frac{1}{A^{p+r}B^q}\sum_{\substack{a \asymp A \\ b\asymp B}}\eth^n_k(|F(a,b)|)\\
        &\ll \frac{||F||^{\varepsilon}}{y_2^{q-1}}
        \sum_{\substack{Ay_1<B \leq X\\A,B \in 2^{\NN}}}\left(A^{q-1}B^{1-q}(\log X)^{4(2^n-1)} + B^{1+\varepsilon-q}\right)\\
        &\ll \frac{||F||^{\varepsilon}(\log X)^{4(2^n-1)+1}}{(y_1y_2)^{q-1}},
    \end{align*}
    since $y_1^{q-1} \leq y_1 \leq \log X$.
    For the case $b \leq 2a$ we have
    \begin{align*}
        \sum_{\substack{a,b \leq X\\ a < b \leq 2a \\ (b-a)y_2 \leq b}} \frac{\eth^n_k(|F(a,b)|)}{a^pb^q(b-a)^r}
        &\ll \frac{1}{y_1^{q-1}}\sum_{\substack{c,b \leq X \\ \max\{cy_2,2c\}<b \leq X}} \frac{\eth^n_k(|F(b-c,b)|)}{b^qc^{p+r}} \\
        &\ll \frac{||F||^{\varepsilon}(\log X)^{4(2^n-1)+1}}{(y_1y_2)^{q-1}},
    \end{align*}
from above. This proves the lemma.
\end{proof}

The next result, while of a technical nature, will be used later on in our work.

\begin{lemma}\label{plem:technical}
    Let $X > 1$ and let $F \in \ZZ[x_1,x_2]$ be a non-singular quartic binary form
    that is completely reducible over $\ZZ$. Then for any $z_1,z_2 \geq 1$ and $\ee \in \NN^4$ we have
    $$\sum_{\substack{a , b \leq X \\ \max\{z_1a , z_2(b-a)\} \leq b \\ e_1|a, e_2|b \\ e_3|b+a, e_4 |b-a}}
    \eth_k(|F(a,b)|) \ll_{\varepsilon,k} \frac{||F||^{\varepsilon}X^2(\log X)^{4}}{
    \max\{e_1,e_2,e_3,e_4\}^{1-\varepsilon}(z_1z_2)^{1-\varepsilon}},$$
    for any $\varepsilon>0$.
\end{lemma}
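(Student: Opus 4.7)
My plan is to mimic the proof of Lemma \ref{plem:divisor}, using the largest of the four divisibility conditions to save the additional factor of $E^{1-\varepsilon}$, where $E=\max\{e_1,e_2,e_3,e_4\}$. Since retaining only one divisibility condition can only increase the sum, I would first reduce to proving the bound with the single condition $E\mid L(a,b)$, where $L\in\{a,b,a+b,b-a\}$ is whichever linear form the maximum is attained on. The four choices of $L$ should all succumb to the same argument: for $L=a$ or $L=b$ a direct substitution suffices, while for $L=a+b$ or $L=b-a$ I would first perform an integer linear change of variables (e.g.\ $(a,b)\mapsto(a,s)$ with $s=(a+b)/e_3$) which brings the divisibility into coordinate form, replacing $F$ by another non-singular, completely reducible quartic form of norm bounded by a constant multiple of $E^{4}||F||$; this factor is harmlessly absorbed into the $||F||^{\varepsilon}$ in the final estimate.

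I describe the key case $E=e_1$ in detail. Here I would substitute $a=e_1 a'$ and set $G(a',b)=F(e_1 a',b)$, which remains a non-singular, completely reducible quartic binary form with $||G||\leq e_1^{4}||F||$. Following the two-case split in the proof of Lemma \ref{plem:divisor}, I would decompose the sum according to whether $2e_1 a'\leq b$ or $e_1 a'\leq b\leq 2e_1 a'$. In the first subcase the region constraint $z_2(b-e_1 a')\leq b$ forces $z_2\leq 2$, so the factor $z_2^{1-\varepsilon}$ is free; a dyadic decomposition $a'\asymp A'$, $b\asymp B$ with $e_1\max\{z_1,2\}A'\ll B\leq X$, combined with Lemma \ref{plem:TBdivisor} applied to $G$, produces a total contribution of $\ll ||F||^{\varepsilon}X^{2}(\log X)^{4}/(e_1 z_1)^{1-\varepsilon}$ after relabelling $\varepsilon$.

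In the second subcase I would instead use $z_1\leq 2$ (forced by $z_1 e_1 a'\leq b\leq 2 e_1 a'$), and substitute $c=b-e_1 a'$ with $0\leq c\leq e_1 a'$. The new form $H(a',c)=F(e_1 a',e_1 a'+c)$ is still a non-singular, completely reducible quartic form with $||H||\ll e_1^{O(1)}||F||$, and the surviving region constraint becomes $(z_2-1)c\leq e_1 a'$, together with $e_1 a'+c\leq X$. A dyadic decomposition $a'\asymp A'$, $c\asymp C$ with $C\ll e_1 A'/\max\{z_2-1,1\}$ and $e_1 A'\leq X$, plus another application of Lemma \ref{plem:TBdivisor} to $H$, leads symmetrically to $\ll ||F||^{\varepsilon}X^{2}(\log X)^{4}/(e_1 z_2)^{1-\varepsilon}$; combined with $z_1\leq 2$ this gives the required bound in this subcase, and together with the first subcase completes the case $E=e_1$.

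The hard part will be the bookkeeping in the cases $E=e_3$ and $E=e_4$: I need to verify that after the preliminary linear change of variables the intermediate binary form remains non-singular and completely reducible over $\ZZ$, that its norm grows only by a fixed power of $E$ (absorbable into $||F||^{\varepsilon}$), and that the original region constraints translate cleanly into a shape amenable to the same dyadic analysis. Once this is granted, the whole argument reduces to that of Lemma \ref{plem:divisor} with the dyadic range for the substituted variable shrunk by a factor of $E$, which is precisely what produces the saving $E^{1-\varepsilon}$ in the final estimate.
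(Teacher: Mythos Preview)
Your proposal is correct and follows essentially the same strategy as the paper: retain only the largest of the four divisibility conditions, split into the cases $2a\leq b$ and $b\leq 2a$ so that one of $z_1,z_2$ is automatically $\leq 2$, and then absorb the surviving divisibility via a linear change of variables into a rescaled region constraint before applying the existing divisor-sum bound. The only cosmetic difference is that you unwind the dyadic decomposition and appeal to Lemma~\ref{plem:TBdivisor} directly, whereas the paper packages that step by invoking Lemma~\ref{plem:divisor} after the substitution; the bookkeeping concerns you flag for the cases $E=e_3,e_4$ are genuine but routine, and the paper handles them in exactly the way you anticipate.
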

\begin{proof}
    In what follows all implied constants are allowed to depend on $\varepsilon$ and $k$.
    As in the proof of Lemma \ref{plem:divisor}, we split the summation up into the cases $2a \leq b$ and $b \leq 2a$, and moreover we consider the cases where $\max\{e_1,e_2,e_3,e_4\}=e_i$ for some $i$. This leads to eight separate cases, which notate as follows.
    \begin{table}[hbt]
	\centering
	\begin{tabular}{lllll}
	$E_{11}$:& $e_1|a,$\quad &$2a\leq b$,\qquad \qquad & $E_{12}$: $e_1|a,$\quad &$b \leq 2a$,   \\
	$E_{21}$:& $e_2|b,$\quad &$2a\leq b$, \qquad \qquad& $E_{22}$: $e_2|b,$\quad &$b \leq 2a$,   \\
	$E_{31}$:& $e_3|b+a,$ \quad &$2a\leq b$, \qquad \qquad& $E_{32}$: $e_3|b+a,$ \quad &$b \leq 2a$, \\
	$E_{41}$:& $e_4|b-a,$ \quad &$2a\leq b$, \qquad \qquad& $E_{42}$: $e_4|b-a,$ \quad &$b \leq 2a$.  \\
	\end{tabular}
    \end{table}

    \noindent In the case $E_{11}$ we may assume that $z_2 \leq 2$, since otherwise the sum vanishes
    as in the proof of Lemma \ref{plem:divisor}. Here we have
    \begin{align*}
	\sum_{\substack{2a \leq b \leq X \\ z_1a \leq b \\ e_1|a}} \eth_k(|F(a,b)|)
 	&\ll \frac{1}{z_2^{1-\varepsilon}}\sum_{\substack{b \leq X \\ z_1 e_1 c \leq b }} \eth_k(|F(e_1c,b)|)
	\ll \frac{||F||^{\varepsilon}X^2(\log X)^4}{e_1^{1-\varepsilon}(z_1z_2)^{1-\varepsilon}},
    \end{align*}
    by Lemma \ref{plem:divisor}. The case $E_{42}$ can also be handled in a similar manner.
    For the case $E_{32}$ we have
    \begin{align*}
	\sum_{\substack{a , b \leq X \\ z_2(b-a) \leq b \leq 2a \\ e_3|b+a}} \eth_k(|F(a,b)|)
 	&\ll \frac{1}{z_1^{1-\varepsilon}}\sum_{\substack{d \leq X \\ z_2 c \leq d \\ e_3 | d}} \eth_k(|F(d-c,d+c)|)
	\ll \frac{||F||^{\varepsilon}X^2(\log X)^4}{e_3^{1-\varepsilon}(z_1z_2)^{1-\varepsilon}},
    \end{align*}
    again by Lemma \ref{plem:divisor}. This method also handles the cases $E_{21},E_{31},E_{12}$ and $E_{22}$,
    the key fact here being that the linear form which $e_i$ divides is bounded below by $b$.
    This leaves the last case $E_{41}$, where we have
    $$ \sum_{\substack{2a\leq b \leq X \\ z_1a \leq b \\ e_4|b-a}}\eth_k(|F(a,b)|)
        \ll\frac{1}{z_2^{1-\varepsilon}}\sum_{\substack{a\leq d \leq X \\ z_1a \leq d+a \\ e_4|d}}\eth_k(|F(a,d+a)|)
        \ll \frac{||F||^{\varepsilon}X^2(\log X)^4}{e_4^{1-\varepsilon}(z_1z_2)^{1-\varepsilon}}.$$
    Collecting these eight cases together completes the proof of the lemma.
\end{proof}

\section{Proof of Theorem \ref{thm:dp4asym}}
\subsection{The conic bundle structure}
As mentioned in the introduction, we begin the proof of Theorem \ref{thm:dp4asym}
by utilising the fact that $S$ has the structure of a conic bundle, at least away from the lines of $S$.
We have the following rational map
\begin{align*}
S &\dashrightarrow \PP^1 \\
x &\mapsto (x_0:x_2).
\end{align*}
The closure of the fibre over
a point $(a:b)$ with $ab\neq0$ is the rational curve
$$ax_2=bx_0 ,\quad bx_2=ax_1, \quad abx_3x_4=x_2^2(b^2-a^2),$$
on $S$. To proceed we choose a representative $(a,b)\in \ZZ^2$ of $(a:b) \in \Ptwo(\QQ)$ with
$(a,b)=1$ and $a>0$. Then we may pull back these rational curves to plane conics via the morphisms
\begin{equation}\label{def:psi_a,b}
\psi_{a,b}:\Ptwo \to \PP^4 , \quad
\psi_{a,b}: (x:y:z) \mapsto (a^2z:b^2z:abz: x : y),
\end{equation}
to get
$$N_U(B) = \sum_{\substack{(a,b) \in \ZZ^2\\ ab \neq 0, a>0 \\(a,b)=1}} N_{C_{a,b}}(B),$$
where
\begin{align}
    C_{a,b}:xy&=ab(b^2-a^2)z^2 \label{def:C_a,b}, \\
    N_{C_{a,b}}(B)&= \#\{ (x:y:z) \in C_{a,b}(\QQ) : H(\psi_{a,b}(x:y:z))\leq B, xyz\neq0\}. \nonumber
\end{align}
Note that here we are still using the height function $H$ given by the
embedding of $S$ into $\PP^4$.

\subsection{Reducing the range of summation}
The next simplification is to reduce the range of summation of $a$ and $b$, so that we may assume that they have roughly the same size.
\begin{lemma}\label{lem:reduce}
    We have
    \begin{align*}
        N_U(B)=4\sum_{(a,b) \in \regA^*} N_{C_{a,b}}(B)
             + O\left(\frac{B(\log B)^5}{(\log \log B)^{1/3}}\right),
    \end{align*}
    where we define
    $$\regA=\left\{(a,b) \in \RR^2:
    \begin{array}{ll}
        0<a< b \leq \sqrt{B},\\
        b-a > b/\log\log B,\\
        a > b/\log \log B.
    \end{array}\right\}, \quad \regA^*= \{ (a,b) \in \ZZ^2 \cap \regA : (a,b)=1\}.$$
\end{lemma}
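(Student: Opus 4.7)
The plan is to obtain the factor of $4$ from symmetries of $S$, to dispose of $b > \sqrt{B}$ by a direct height argument, and then to show that the remaining ``degenerate'' pairs contribute $O(B(\log B)^5/(\log\log B)^{1/3})$ by combining the uniform conic estimate of Lemma \ref{plem:conic} with the divisor-sum bounds of Section \ref{sec:Useful_results}.

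For the factor $4$, I invoke the two automorphisms of $S$ already used in the computation of $\tau_\infty$: the sign change $\sigma_1:(x_0,x_1,x_2,x_3,x_4)\mapsto(-x_0,-x_1,x_2,x_3,-x_4)$ and the swap $\sigma_2:(x_0,x_1,x_2,x_3,x_4)\mapsto(x_1,x_0,x_2,x_3,-x_4)$. Both preserve the equations of $S$ and the height $H$, and on the fibration parameter $(a:b)=(x_0:x_2)$ they act by $(a:b)\mapsto(-a:b)$ and, via the identity $x_1=x_2^2/x_0$, by $(a:b)\mapsto(b:a)$. Restricted to $\{a>0,\,ab\neq 0,\,\gcd(a,b)=1\}$, the orbit of any pair $(a,b)$ with $0<a<b$ has exactly four elements, corresponding to the sign of $b$ and to whether $|a|<|b|$ or $|a|>|b|$; the only exceptional orbit $\{(1,1),(1,-1)\}$ contributes nothing since $C_{1,1}:xy=0$ forces $xy=0$. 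Thus the full inner sum over $(a,b)$ equals $4$ times the sum over $\{(a,b):0<a<b,\,\gcd(a,b)=1\}$. Next, $\gcd(a,b)=1$ implies $\gcd(a^2,b^2,ab)=1$, so a primitive $(x,y,z)\in\ZZ^3$ produces a primitive $\psi_{a,b}(x,y,z)$ of height $\max(b^2|z|,|x|,|y|)$; hence $b>\sqrt{B}$ together with $b^2|z|\leq B$ forces $z=0$, contradicting $xyz\neq 0$.

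The main step is to bound the contribution of pairs with $0<a<b\leq\sqrt{B}$, $\gcd(a,b)=1$, and $\min(a,b-a)\leq b/\log\log B$. I would apply Lemma \ref{plem:conic} to $C_{a,b}$ with $B_1=B_2=B$, $B_3=\lfloor B/b^2\rfloor$. Writing $T:=ab(b-a)(b+a)$, the determinant of the (integer-scaled) defining matrix is proportional to $T$ and the gcd of its $2\times 2$ minors is $O(1)$, so Lemma \ref{plem:conic} yields
$$
N_{C_{a,b}}(B)\ll\tau(T)\left(1+\frac{B^3}{b^2 T}\right)^{1/3}\ll\tau(T)\left(\frac{B}{a^{1/3}b^{5/3}}+1\right),
$$
using $T\asymp ab^3$ when $a\ll b$. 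A dyadic decomposition of the degenerate region, combined with Lemma \ref{plem:TBdivisor} applied to the non-singular completely reducible quartic $F(a,b)=ab(b-a)(b+a)$, gives $\sum_{a\asymp A,\,b\asymp B_0}\tau(T)\ll AB_0(\log B)^4$. Setting $\delta:=1/\log\log B$, the contribution of $a\leq\delta b$ is at most
$$
\ll B(\log B)^4\sum_{B_0\leq\sqrt{B}}\sum_{A\leq\delta B_0}\frac{AB_0}{A^{1/3}B_0^{5/3}}\ll\delta^{2/3}B(\log B)^5,
$$
and the case $b-a\leq\delta b$ is handled symmetrically by the change of variables $a\mapsto b-a$, which preserves the shape of $F$. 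Since $\delta^{2/3}\leq(\log\log B)^{-1/3}$, the desired error term follows.

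The main obstacle lies in this last estimate: one must match the cube-root saving coming from Lemma \ref{plem:conic} against the quartic divisor function so that the range restriction $a\leq\delta b$ produces the stated power of $\log\log B$. The key input is Lemma \ref{plem:TBdivisor}, which delivers the clean $(\log B)^4$ average for $\tau(F(a,b))$ on dyadic boxes when $F=ab(b-a)(b+a)$ is a non-singular completely reducible quartic; any sharper divisor bound in the literature would of course suffice, since the claim only requires an $o(1)$ saving.
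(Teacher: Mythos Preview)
Your proposal is correct and follows essentially the same approach as the paper: the factor $4$ from the two height-preserving automorphisms, the uniform bound from Lemma~\ref{plem:conic} giving $N_{C_{a,b}}(B)\ll\tau(ab(b^2-a^2))\bigl(1+B/(a^{1/3}b(b^2-a^2)^{1/3})\bigr)$, and a divisor-sum estimate over the degenerate strips. The only cosmetic difference is that you carry out the dyadic decomposition by hand using Lemma~\ref{plem:TBdivisor} (incidentally obtaining the slightly sharper saving $(\log\log B)^{-2/3}$), whereas the paper invokes the packaged Lemma~\ref{plem:divisor}, whose proof is precisely that dyadic argument.
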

\begin{proof}
    We begin by noting that
    $$N_{C_{a,b}}(B)=\frac{1}{2}\#\left\{
             (x,y,z) \in \ZZ^3:
            \begin{array}{ll}
                 (x,y,z)=1, xyz \neq0, \\
                 xy=z^2ab(b^2-a^2),\\
                  \max\{|x|,|y|,|a^2z|,|b^2z|\}\leq B.
            \end{array} \right\}.$$
    We now show that we may assume that $a < b$ by introducing a factor of $4$ into the counting problem.
    On noticing that the counting problem is invariant under the automorphism which negates $b$ and $x$,
    we see that we may assume that $b > 0$.
    Similarly we may assume that $b > a$, since the counting problem is again invariant
    under the automorphism which swaps $a$ and $b$ and negates $x$.
    Next, by Lemma \ref{plem:conic} the number of points on each conic is
    $$ N_{C_{a,b}}(B) \ll \tau(ab(b^2-a^2))\left(1 +\frac{B}{a^{1/3}b(b^2-a^2)^{1/3}}\right).$$
    However Lemma \ref{plem:divisor} implies that
    $$\sum_{a < b < \sqrt{B}}\tau(ab(b^2-a^2)) \ll B (\log
    B)^4.$$
    The contribution from  $ a\log \log B<b$ is
    $$B\sum_{\substack{a < b < \sqrt{B} \\ a\log \log B<b}}\frac{\tau(ab(b^2-a^2))}{a^{1/3}b(b^2-a^2)^{1/3}} \ll
    \frac{B(\log B)^5}{(\log \log B)^{1/3}},$$
    by Lemma \ref{plem:divisor}. Similarly, the contribution from $ (b-a)\log \log B < b$ is
    $$B\sum_{\substack{a < b < \sqrt{B} \\ (b-a)\log \log B < b}}\frac{\tau(ab(b^2-a^2))}{a^{1/3}b(b^2-a^2)^{1/3}} \ll
    \frac{B(\log B)^5}{(\log \log B)^{1/3}}.$$
\end{proof}

It is worth pointing out now that minor changes to the proof of Lemma \ref{lem:reduce} will yield the upper bound
$N_U(B) \ll B (\log B)^5$
for the counting problem. We will have to work significantly harder to get an asymptotic formula.

\subsection{Parameterising the conics}
In this section we count the number of points on each of the conics $C_{a,b}$, as given by (\ref{def:C_a,b}).
In what follows, we make frequent use of the fact that the coprimality of $a$ and $b$ implies that $(ab,b^2-a^2)=1$.
We may parameterise each of the conics via the morphisms
\begin{align*}
    \varphi_{a,b}:\Pone \to C_{a,b} \subset \Ptwo, \quad
    \varphi_{a,b}:(y_1:y_2) \mapsto (aby_1^2:(b^2-a^2)y_2^2:y_1y_2).
\end{align*}
Passing to the affine cone yields
$$N_{C_{a,b}}(B) = 2
\#\left\{\yy \in \NN^2:
    \begin{array}{ll} (y_1,y_2)=1, H(\psi_{a,b}(\varphi_{a,b}(\yy)))\leq B
    \end{array}
\right\},$$
where $\psi_{a,b}$ is given by (\ref{def:psi_a,b}).
To simplify notation we define
\begin{equation} \label{def:M(y)}
    M_{a,b}(\yy) = \max\{b^2y_1y_2,aby_1^2,(b^2-a^2)y_2^2\},
\end{equation}
to get
$$N_U(B) = 8N(B) + O\left(\frac{B(\log B)^5}{(\log \log B)^{1/3}}\right),$$
where
\begin{equation} \label{def:N(B)}
    N(B)= \sum_{\substack{(a,b) \in \regA^*}}
    \#\left\{\yy \in \NN^2:\begin{array}{ll} (y_1,y_2)=1,\\
    M_{a,b}(\yy)\leq (y_1,b^2-a^2) (y_2,ab)B\end{array}\right\}.
\end{equation}
Applying \Mob inversion, we find that
\begin{align*}
    N(B) &=  \sum_{(a,b) \in \regA^*}
    \sum_{\substack{\lambda_1|(b^2-a^2)\\\lambda_2|ab}}
    \#\left\{\yy \in \NN^2:\begin{array}{ll}
    (y_1,y_2)=1,  \lambda_i|y_i, \\
    (y_1/\lambda_1,(b^2-a^2)/\lambda_1)=1, \\
    (y_2/\lambda_2,ab/\lambda_2)=1,\\
    M_{a,b}(\yy)\leq \lambda_1\lambda_2B.
    \end{array}\right\} \\
    &= \sum_{\substack{(a,b) \in \regA^*
    }} \sum_{\substack{k_1\lambda_1|(b^2-a^2)\\ k_2\lambda_2|ab}}\mu(k_1k_2)
    \#\left\{\yy \in \NN^2:\begin{array}{ll}
    (y_1,y_2)=1,  k_i\lambda_i|y_i,\\
    M_{a,b}(\yy)\leq \lambda_1\lambda_2B.
    \end{array}\right\}. \\
\end{align*}
Our next step is to restrict the range of summation of the $\lambda_i$ and $k_i$, to make explicit the size
constraints implied by the expression $M_{a,b}(\yy)\leq \lambda_1\lambda_2B$.

\begin{lemma} \label{lem:k_i}
    For any $\varepsilon >0$ we have
    $$  N(B)= \sum_{(a,b) \in \regA^*}
        \sum_{\substack{\lambda_1k_1 |(b^2-a^2)\\ \lambda_2k_2|ab \\ k_1,k_2 \leq K}}\mu(k_1k_2)
        \#\left\{\yy \in \NN^2:\begin{array}{ll}
        (y_1,y_2)=1,  k_i\lambda_i|y_i\\
        M_{a,b}(\yy)\leq \lambda_1\lambda_2B
        \end{array}\right\}
        + O_\varepsilon\left(B(\log B)^{4+\varepsilon}\right), $$
    where $K=(\log B)^{1000}$ and the summation is subject to the condition
    \begin{equation}
        \frac{b^2K^2}{B} \leq \frac{k_1\lambda_1}{k_2\lambda_2} \leq \frac{B}{b^2K^2}. \label{div:restriction1}
    \end{equation}
\end{lemma}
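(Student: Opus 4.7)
The plan is to show that two pieces being dropped---tuples with $\max(k_1,k_2) > K$, and tuples with $k_1,k_2 \leq K$ but violating~\eqref{div:restriction1}---each contribute $O_\varepsilon(B(\log B)^{4+\varepsilon})$. The common base estimate is the ``hyperbola bound'' on the inner $\yy$-count: writing $y_i = k_i\lambda_i w_i$, the constraint $b^2 y_1 y_2 \leq \lambda_1 \lambda_2 B$ (contained in $M_{a,b}(\yy) \leq \lambda_1\lambda_2 B$) becomes $w_1 w_2 \leq B/(b^2 k_1 k_2)$, giving $\#\{\yy\} \ll (B\log B)/(b^2 k_1 k_2)$. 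For the truncation $k_1 > K$, insert this bound and sum over $(k_1,\lambda_1)$ via $\sum_{k_1>K,\,k_1\lambda_1|n} 1/k_1 \leq \tau(n)^2/K$, and over $(k_2,\lambda_2)$ via $\sum_{k_2\lambda_2|m} 1/k_2 \ll \tau(m)\log\log B$. Lemma~\ref{plem:divisor} applied to the quartic form $F(a,b) = ab(b-a)(b+a)$ then bounds the remaining divisor sum over $\regA^*$ by $(\log B)^{O(1)}$, so the total is $\ll B(\log B)^{O(1)}/K \ll B$ since $K = (\log B)^{1000}$. The case $k_2 > K$ is symmetric.

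For the range restriction, suppose without loss of generality that $u/v > B/(b^2K^2)$ where $u = k_1\lambda_1$ and $v = k_2\lambda_2$. The key observation is that the constraint $ab\,y_1^2 \leq \lambda_1 \lambda_2 B$ combined with $y_1 \geq u$ and $ab > b^2/\log\log B$ (from $a > b/\log\log B$ on $\regA$) forces $w_1 := y_1/u$ into the much narrower range $w_1 \leq K\sqrt{(\log\log B)/(k_1k_2)}$, compared to the typical $\sqrt{B/(b^2k_1k_2)}$. A hyperbola argument then sharpens the $\yy$-count to $\ll (B\log\log B)/(b^2k_1k_2)$, saving a factor $\log B/\log\log B$. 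To sum over $(a,b)$ with $k_1\lambda_1|(b-a)(b+a)$, factor $\lambda_1 = e_3 e_4$ with $e_3|(b+a)$ and $e_4|(b-a)$ (legitimate up to a factor of $2$ since $(a,b)=1$) and apply Lemma~\ref{plem:technical} to $F = ab(b-a)(b+a)$. The bad hypothesis forces $\max(e_3,e_4) \gg \sqrt{vB/(b^2K^2k_1)}$, and the polynomial saving $1/\max(e_3,e_4)^{1-\varepsilon}$ absorbs the $K$-factor together with enough powers of $\log B$.

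The main obstacle is coordinating these compounding savings so that the final exponent on $\log B$ is $4+\varepsilon$ rather than $5$ (or worse): individually, neither the refined $\yy$-count nor the divisor saving from Lemma~\ref{plem:technical} is sufficient, but together with the geometry of $\regA$ (via $a > b/\log\log B$ and $b-a > b/\log\log B$) they yield the required bound. The delicate technical step is verifying that both cases of the bad range---$u/v$ too large and $u/v$ too small---admit a reduction to Lemma~\ref{plem:technical} with the same quartic $F = ab(b-a)(b+a)$, and that the crude tail bound $\sum_{u>T,\,u|N} 1 \leq N/T$ never needs to be used in a way that introduces a spurious power of $B$ from $\sum_{(a,b)}(b^2-a^2)$.
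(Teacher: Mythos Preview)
Your treatment of the truncation $\max(k_1,k_2)>K$ is essentially the paper's argument (the paper uses the box bound from Lemma~\ref{plem:boxlattice} rather than a hyperbola count, saving a harmless $\log B$, but $K=(\log B)^{1000}$ swallows either version).

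The range-restriction step, however, does not close. First, your ``refined'' $\yy$-count $\ll (B\log\log B)/(b^2k_1k_2)$ is not actually sharper than what one gets for \emph{every} $(\lambda_1,\lambda_2)$, bad range or not: the box bound $\#\{\yy\}\ll B/(k_1k_2\sqrt{ab(b^2-a^2)})$, combined with $a,b-a>b/\log\log B$ on $\regA$, already gives the same estimate. So no saving has yet been extracted from the bad range. Second, the appeal to Lemma~\ref{plem:technical} does not work as described. If you sum over $\lambda_2$ first (producing $\tau(ab)$) and then invoke Lemma~\ref{plem:technical} with fixed $e_3,e_4$, the remaining sum $\sum_{e_3,e_4}1/\max(e_3,e_4)^{1-\varepsilon}$ diverges, and the bad-range lower bound on $e_3e_4$---which depends on both $\lambda_2$ and $b^2$---has been lost. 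If instead you fix all of $e_1,e_2,e_3,e_4$ and sum over $(a,b)$, the bad-range inequality still contains $b^2$, so it cannot be imposed on the $e_i$ until $b$ is localised; but your outline never localises $b$.

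The paper proceeds differently. It uses only the crude $\yy$-bound $\ll B/(b^2k_1k_2)$ (up to $\log\log B$) and extracts the full $\log B$ saving from the \emph{shortness} of the bad interval: after dyadic decomposition $b\asymp A$, $\lambda_i\asymp L_i$, the constraint \eqref{div:restriction1} together with the automatic bound $k_1^2\lambda_1/\lambda_2\le B/(ab)$ forces $L_1/L_2$ into a multiplicative window of length $K^{O(1)}$, hence $O(\log\log B)$ dyadic values of $L_1$ for each $(A,L_2)$. The inner count of $(a,b,\lambda_1,\lambda_2)$ is then handled by a hyperbola trick (replacing $\lambda_i$ by its complementary divisor when $\lambda_i>A$) and elementary lattice counting via Lemma~\ref{plem:boxlattice}, not Lemma~\ref{plem:technical}. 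This dyadic localisation of $b$ is the missing ingredient in your sketch.
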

\begin{proof}
    We first consider the contribution from $\max\{k_1,k_2\} \geq K$.
    In this case we may use Lemma \ref{plem:boxlattice} to count the number of $y_i$'s, to get an upper
    bound
    $$
     \sum_{(a,b) \in \regA^*} \sum_{\substack{\lambda_1k_1|(b^2-a^2)\\ \lambda_2k_2|ab \\ k_1k_2 \geq K}}
    \frac{B}{\sqrt{ab(b^2-a^2)}k_1k_2}
    \ll \frac{B (\log \log B)^2}{K}\sum_{a<b \leq \sqrt{B}} \frac{\tau^2(ab(b^2-a^2))}{b^2} \ll B,
    $$
    by Lemma \ref{plem:divisor}.
    We now show that we may restrict the range of summation to
    $k_1\lambda_1/k_2\lambda_2 \leq B/b^2K^2,$
    the lower bound being achieved in an analogous manner. Note that since
    $M_{a,b}(\yy)\leq \lambda_1\lambda_2B$ and $k_i\lambda_i |y_i$ for $i=1,2$,
    we deduce that we need only consider the contribution from
    \begin{equation} \label{eqn:kres1}
        \frac{B}{b^2K^2} \leq \frac{k_1\lambda_1}{k_2\lambda_2} \leq \frac{k_1^2\lambda_1}{\lambda_2} \leq \frac{B}{ab}.
    \end{equation}
    Using Lemma \ref{plem:boxlattice} again and summing over dyadic intervals we see that the contribution from (\ref{eqn:kres1}) is
    \begin{align*}
        \ll B(\log \log B)^2 \sum_{(a,b) \in \regA^*} \sum_{\substack{\lambda_1k_1 |(b^2-a^2)\\ \lambda_2k_2|ab \\ k_1,k_2
        \leq K \\ { (\ref{eqn:kres1})} \text{ holds}}} \frac{1}{k_1k_2b^2}
        \ll B(\log \log B)^4 \sum_{\substack{A \ll B \\ L_1,L_2 \ll A^2 \\ A,L_1,L_2 \in 2^{\NN}\\ { (\ref{eqn:kres2})} \text{ holds}}}\frac{1}{A^2} \sum_{\substack{(a,b) \in \regA^* \\ b \asymp A}}
        \sum_{\substack{\lambda_i \asymp L_i \\ \lambda_1 |(b^2-a^2)\\ \lambda_2|ab}} 1,
    \end{align*}
    where the sum is subject to the condition
    \begin{equation}
        \frac{B}{A^2K^3} \ll \frac{L_1}{L_2} \ll \frac{BK^2}{A^2} \label{eqn:kres2}.
    \end{equation}
    As in the Dirichlet hyperbola method, if $\lambda_2 \geq A$, say, then we may write $\mu_2=ab/\lambda_2$ and choose
    to sum over $\mu_2$ instead. Since $\lambda_2 \asymp L_2$ and $a \leq b$, we see that $\mu_2 \ll A^2/L_2 \ll A$ and
    $\mu_2 \gg A^2/(L_2 \log \log B)$. Using a similar trick with $\lambda_1$ gives
    \begin{equation}
        \ll B(\log \log B)^4 \sum_{\substack{A \ll B \\ L_1,L_2 \ll A^2\\A,L_1,L_2 \in 2^{\NN}\\ { (\ref{eqn:kres2})}
        \text{ holds}}}\frac{1}{A^2}
        \sum_{\substack{\lambda_i \ll f_2(L_i,A) \\ \lambda_i \gg f_1(L_i,A)}}
        \sum_{\substack{b \asymp A \\(a,b) \in \regA^* \\ \lambda_1 |(b^2-a^2) \\ \lambda_2|ab}} 1,        \label{eqn:k_i}
    \end{equation}
    where
    $$f_1(L_i,A)=\min\left\{L_i,\frac{A^2}{L_i (\log \log B)}\right\}, \quad
        f_2(L_i,A)=\min\left\{L_i,\frac{A^2}{L_i}\right\}.$$
    However we have
    \begin{align*}
        \sum_{\substack{b \asymp A \\(a,b) \in \regA^*\\ \lambda_1 |(b^2-a^2) \\ \lambda_2|ab}} 1
        &\ll \sum_{\substack{\lambda_1=e'e_1e_2 \\ \lambda_2=e_3e_4}}
        \sum_{\substack{(a,b) \in \Gamma_\ee \\ a,b \ll A \\ (a,b)=1 \\ e'=(\lambda_1,b+a,b-a)} }1
        \ll\sum_{\substack{\lambda_1=e'e_1e_2 \\ \lambda_2=e_3e_4 \\ (e_i,e_j)=1 \\e'|2, i\neq j}}
        \sum_{\substack{(a,b) \in \Gamma_\ee \\ a,b \ll A \\ (a,b)=1} }1,
    \end{align*}
    where we write $\Gamma_\ee = \{\xx \in \ZZ^2: e_1 | (b+a), e_2 | (b-a), e_3 | a, e_4 | b \}$.
    The coprimality of $e_i$ with $e_j$ for $i\neq j$ ensures that the lattice $\Gamma_\ee$ can be written
    as the intersection of four lattices with coprime determinants $e_1,e_2,e_3$ and $e_4$ respectively. Thus
    Lemma \ref{plem:boxlattice} implies that
    \begin{align*}
        \sum_{\substack{b \asymp A \\(a,b) \in \regA^* \\ \lambda_1 |(b^2-a^2) \\ \lambda_2|ab}} 1
        &\ll \sum_{\substack{\lambda_1=e'e_1e_2 \\ \lambda_2=e_3e_4 \\e'|2}}
        \left(\frac{A^2}{e_1e_2e_3e_4}+1\right)
        \ll \frac{\tau(\lambda_1)\tau(\lambda_2)A^2}{\lambda_{1}\lambda_{2}},
    \end{align*}
    since $\lambda_i \ll A$. Hence we find that (\ref{eqn:k_i}) is bounded above by
    \begin{align*}
        B(\log \log B)^4 \sum_{\substack{A \ll B \\ L_1,L_2 \ll A^2\\A,L_1,L_2 \in 2^{\NN}\\ { (\ref{eqn:kres2})}
        \text{ holds}}}
        \sum_{\substack{\lambda_i \ll f_2(L_i,A) \\ \lambda_i \gg f_1(L_i,A)}}
        \frac{\tau(\lambda_1)\tau(\lambda_2)}{\lambda_1\lambda_2}
        \ll B(\log B)^2(\log \log B)^4\sum_{\substack{A,L_1,L_2\ll B\\A,L_1,L_2 \in 2^{\NN}\\ { (\ref{eqn:kres2})}
        \text{ holds}}}1.
    \end{align*}
    The sum over those $L_1$ satisfying (\ref{eqn:kres2}) contributes $O(\log \log B)$,
    and the sum over $A$ and $L_2$ gives $O((\log B)^2)$, which is satisfactory for the lemma.
\end{proof}
We emphasise now that the condition (\ref{div:restriction1}) is very important to our work.
It is crucial for the handling of the error term in Lemma \ref{lem:f}, and it is this condition
which forced us to consider a \emph{restricted} divisor function in our work in Section \ref{sec:div}, rather than the usual
divisor function. Intriguingly, there is a purely geometrical interpretation for its appearance. We shall see in
the proof of Lemma \ref{lem:mainterm2} that it contributes towards the constant
$\alpha(\tS)$ appearing in the leading constant in Section \ref{sec:constant}.

We are now ready to handle the summation over $y_1$ and $y_2$.

\begin{lemma} \label{lem:f}
    For any $\varepsilon>0$ we have
    $$  N(B) = B\sum_{\substack{(a,b) \in \regA^* \\ \ell \leq B}}\frac{f(b/a)}{b^2}
        \sum_{\substack{\lambda_1k_1 |(b^2-a^2)\\ \lambda_2k_2|ab \\{ (\ref{div:restriction1})} \text{ holds}}}
        \frac{\mu(k_1k_2)\mu(\ell)(\ell,k_1k_2\lambda_1\lambda_2)}{\ell^2k_1k_2}
         + O_\varepsilon\left(B(\log B)^{4+\varepsilon}\right),$$
    where for $\theta >1$ we let
    $$f(\theta)=
    \vol\left\{\yy \in \RR_{>0}^2:\begin{array}{ll}
     y_1y_2 \leq 1, \\
    y_1^2 \leq \theta, y_2^2 \leq \theta^2/(\theta^2-1)
    \end{array}\right\}.$$
\end{lemma}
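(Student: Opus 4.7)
The plan is to remove the coprimality $(y_1,y_2)=1$ by M\"obius inversion, count lattice points in the planar region $\{\yy\in\RR^2_{>0}:M_{a,b}(\yy)\leq\lambda_1\lambda_2 B\}$ with $y_i$ divisible by suitable integers, identify the resulting volume with $f(b/a)$, and absorb the various error terms.

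First I would write
$$
\#\{\yy\in\NN^2:(y_1,y_2)=1,\,k_i\lambda_i|y_i,\,M_{a,b}(\yy)\leq\lambda_1\lambda_2 B\}
=\sum_{\ell}\mu(\ell)N_\ell(a,b,\lambda_1,\lambda_2,k_1,k_2;B),
$$
where $N_\ell$ counts $\yy\in\NN^2$ with $M_i|y_i$ for $M_i=\operatorname{lcm}(\ell,k_i\lambda_i)=\ell k_i\lambda_i/(\ell,k_i\lambda_i)$ and $M_{a,b}(\yy)\leq\lambda_1\lambda_2 B$. Only $\ell\leq B$ can contribute, as otherwise the region is empty (since $y_1,y_2\geq 1$ force $M_{a,b}(\yy)\geq b^2$ which for $b$ near $\sqrt B$ is already comparable to $B$). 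The region is contained in the box $y_1\leq\sqrt{\lambda_1\lambda_2 B/(ab)}$, $y_2\leq\sqrt{\lambda_1\lambda_2 B/(b^2-a^2)}$, with piecewise-smooth boundary of length bounded by the perimeter of this box, so Lemma \ref{plem:boxlattice} (applied after a translation/scaling argument) yields
$$
N_\ell=\frac{\vol\{\yy\in\RR_{>0}^2:M_{a,b}(\yy)\leq\lambda_1\lambda_2 B\}}{M_1M_2}+\mathrm{error}_\ell.
$$

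Next I would compute the volume by the substitution $y_i=u_i\sqrt{\lambda_1\lambda_2 B}/b$. The three inequalities defining $M_{a,b}$ transform into $u_1u_2\leq 1$, $u_1^2\leq b/a$ and $u_2^2\leq b^2/(b^2-a^2)=\theta^2/(\theta^2-1)$ for $\theta=b/a>1$, and the Jacobian contributes $\lambda_1\lambda_2 B/b^2$. Hence the volume equals $\lambda_1\lambda_2 B\,f(b/a)/b^2$. Since $(a,b)=1$ implies $(ab,b^2-a^2)=1$, and since $k_1\lambda_1\mid b^2-a^2$, $k_2\lambda_2\mid ab$, we have $(k_1\lambda_1,k_2\lambda_2)=1$; for squarefree $\ell$ (which is all that matters because of $\mu(\ell)$) this gives $(\ell,k_1\lambda_1)(\ell,k_2\lambda_2)=(\ell,k_1k_2\lambda_1\lambda_2)$. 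Inserting this into $N_\ell$ produces exactly the main term stated in the lemma, with the factor $\lambda_1\lambda_2$ cancelling.

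The main obstacle is to control the total error term after summing over all parameters. The lattice error for a fixed choice is $\ll \sqrt{\lambda_1\lambda_2 B/(ab)}/M_1+\sqrt{\lambda_1\lambda_2 B/(b^2-a^2)}/M_2+1$, and after summing over $\ell\leq B$ this yields a contribution of order
$$
\ll \frac{B^{1/2}(\log B)}{b}\Big(\sqrt{\tfrac{\lambda_1\lambda_2}{ab}}\,\frac{1}{k_1\lambda_1}+\sqrt{\tfrac{\lambda_1\lambda_2}{b^2-a^2}}\,\frac{1}{k_2\lambda_2}\Big)+\cdots
$$
per choice of $(a,b,\lambda_1,\lambda_2,k_1,k_2)$. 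The restriction (\ref{div:restriction1}) guarantees that $\lambda_1\lambda_2/b^2\leq B/(b^2K^2)$ in either direction, so each square-root factor is controlled. Summing over $k_i\leq K$ is harmless, and the sum over $\lambda_i\mid b^2-a^2$ respectively $ab$ followed by $(a,b)\in\regA^*$ is bounded via Lemma \ref{plem:divisor} applied to the quartic form $ab(b^2-a^2)$, yielding a total of $O_\varepsilon(B(\log B)^{4+\varepsilon})$, as required. The truncation $\ell\leq B$ in the main term follows from the observation above, completing the proof.
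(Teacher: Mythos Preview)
Your overall architecture is the same as the paper's: M\"obius over $\ell$, a planar lattice point count with main term $\lambda_1\lambda_2 B\,f(b/a)/(b^2 M_1M_2)$, then control of the error sum. The volume identification and the simplification $M_1M_2=\ell^2k_1k_2\lambda_1\lambda_2/(\ell,k_1k_2\lambda_1\lambda_2)$ for squarefree $\ell$ are fine. However, the error analysis contains a genuine gap. Your sentence ``The restriction (\ref{div:restriction1}) guarantees that $\lambda_1\lambda_2/b^2\leq B/(b^2K^2)$'' is false: condition (\ref{div:restriction1}) bounds only the \emph{ratio} $k_1\lambda_1/(k_2\lambda_2)$, not the product $\lambda_1\lambda_2$, and indeed $\lambda_1\lambda_2$ can be as large as $(b^2-a^2)(ab)\asymp b^4$. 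What the paper actually does is split into the cases $[\ell,k_1\lambda_1]\lessgtr[\ell,k_2\lambda_2]$; in the first case the lattice error from Lemma~\ref{plem:lattice} is $\ll (\log\log B)\sqrt{\lambda_1\lambda_2 B}/(b\,[\ell,k_1\lambda_1])$, and after summing over $\ell$ one is left with a factor $\sqrt{\lambda_2/\lambda_1}/k_1\leq\sqrt{k_2\lambda_2/(k_1\lambda_1)}$, which by the \emph{lower} inequality in (\ref{div:restriction1}) is $\leq \sqrt{B}/(bK)$. It is precisely this saving of a factor $K=(\log B)^{1000}$ that beats the divisor sums coming from $\lambda_i,k_i$; without it the error would only be $O(B(\log B)^C)$ for some large $C$, which is not good enough.

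Two smaller points. First, you quote Lemma~\ref{plem:boxlattice}, but the region $\{M_{a,b}(\yy)\leq\lambda_1\lambda_2 B\}$ is not a box (it has a hyperbolic arc), so the relevant input is Lemma~\ref{plem:lattice}, whose error is $|\partial\regR|/\min(M_1,M_2)$ rather than your separated $X_1/M_1+X_2/M_2$; fortunately this still feeds into the argument above. Second, the statement of the lemma has no constraint $k_i\leq K$, so you must also show that extending the $k_i$-sum back to infinity in the main term costs only $O(B)$; the paper does this at the end of its proof using the crude bound $f(b/a)\leq\log\log B$ and Lemma~\ref{plem:divisor}.
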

\begin{proof}
    Removing the coprimality conditions by \Mob inversion, the main term given by
    Lemma \ref{lem:k_i} has the form

    \begin{align*}
        &\sum_{\substack{(a,b) \in \regA^* \\ \ell \leq B}}
        \sum_{\substack{\lambda_1k_1 |(b^2-a^2)\\ \lambda_2k_2|ab \\ k_1,k_2 \leq K \\ { (\ref{div:restriction1})}
        \text{ holds}}}\mu(k_1k_2)\mu(\ell)
        \#\left\{\yy \in \NN^2:\begin{array}{ll}
             [\ell,k_i\lambda_i]|y_i, i=1,2,\\
            M_{a,b}(\yy)\leq \lambda_1\lambda_2B
            \end{array}\right\}.\\
    \end{align*}
    Letting $Y=\lambda_1\lambda_2B/b^2, q_i=[\ell,k_i\lambda_i],\theta=b/a$ and recalling the definition
    of $M_{a,b}(\yy)$ given in (\ref{def:M(y)}), we see that the number of $(y_1,y_2)$ is
    \begin{equation} \label{y:region}
        \#\left\{\yy \in \NN^2:\begin{array}{ll}
         q_i |y_i,i=1,2,\\
        y_1^2 \leq Y\theta, y_1y_2 \leq Y \\
        y_2^2 \leq Y\theta^2/(\theta^2-1)
        \end{array}\right\}.\\
    \end{equation}
    The first successive minimum of the lattice in (\ref{y:region}) is clearly $\min\{q_1,q_2\}$, and one can check that the boundary of the region
    in question has length $\ll \sqrt{Y\theta} +\sqrt{Y(\theta^2/(\theta^2-1)}$. It follows from
    Lemma \ref{plem:lattice} that (\ref{y:region}) equals
    $$\frac{Yf(b/a)}{q_1q_2} +
    O\left(\frac{\log \log B\sqrt{\lambda_1\lambda_2
    B}}{b\min\{q_1,q_2\}} \right).$$
    In order to handle the error term, we only consider the
    case $[\ell,k_1\lambda_1]\leq[\ell,k_2\lambda_2]$, the other case being
    dealt with in almost exactly the same manner. The error term here contributes
    \begin{align*}
        &\sqrt{B}\log \log B\sum_{\substack{(a,b) \in \regA^* \\ \ell \leq B}}
        \sum_{\substack{\lambda_1k_1 |(b^2-a^2)\\ \lambda_2k_2|ab \\ k_1,k_2 \leq K \\{ (\ref{div:restriction1})} \text{ holds}}}
        \frac{(\ell,k_1\lambda_1)\sqrt{\lambda_2}}{b\ell k_1 \sqrt{\lambda_1}} \\
        \ll &\sqrt{B}\log B\sum_{\substack{(a,b) \in \regA^* \\ \ell \leq B}}
        \sum_{\substack{\lambda_1k_1 |(b^2-a^2)\\ \lambda_2k_2|ab \\ { (\ref{div:restriction1})} \text{ holds}}}
        \sum_{d|(\ell,k_1\lambda_1)}
        \frac{d\sqrt{k_2\lambda_2}}{b\ell \sqrt{k_1\lambda_1}} \\
        \ll &\frac{B(\log B)^2}{K}\sum_{(a,b) \in \regA^*}
        \sum_{\substack{\lambda_1k_1 |(b^2-a^2)\\ \lambda_2k_2|ab}}
        \frac{\tau(k_1\lambda_1)}{b^2},
    \end{align*}
    by (\ref{div:restriction1}). Moreover, it is clear on applying Lemma \ref{plem:divisor} that this is bounded above by $O(B)$,
    since we chose $K$ in Lemma \ref{lem:k_i} to be a very large power of a logarithm.
    We finish the proof by showing that we may extend the sum over the $k_i$ to infinity.
    We note that by Lemma \ref{lem:reduce} we have the upper bound
    \begin{equation} \label{f:upperbound}
        f(b/a)\leq \frac{\sqrt{b}}{\sqrt{a}} \cdot\frac{b}{\sqrt{b^2-a^2}} \leq \log \log B.
    \end{equation}
    Hence by Lemma \ref{plem:divisor}, the contribution to the main term from $\max\{k_1,k_2\} \geq K$ is
    \begin{align*}
        &\ll \frac{B\log \log B}{K}\sum_{\substack{(a,b) \in \regA^* \\ \ell \leq B}}\frac{1}{b^2}
            \sum_{\substack{\lambda_1k_1 |(b^2-a^2)\\ \lambda_2k_2|ab }}
            \frac{(\ell,k_1k_2\lambda_1\lambda_2)}{\ell^2} \\
        &\ll \frac{B(\log B)^2}{K}\sum_{\substack{a<b\leq \sqrt{B}}}\frac{\tau^2(ab(b^2-a^2))}{b^2}
        \ll B,
    \end{align*}
    which is satisfactory.
\end{proof}

\subsection{The restricted divisor problem}
It now remains to deal with the main term of $N_U(B)$, which by Lemma \ref{lem:f} has the form
\begin{equation} \label{eq:mainterm}
    8B\sum_{\substack{(a,b) \in \regA^* \\ \ell \leq B}}\frac{f(b/a)}{b^2}
    \sum_{\substack{\lambda_1k_1 |(b^2-a^2)\\ \lambda_2k_2|ab \\ { (\ref{div:restriction1})} \text{ holds}}}
    \frac{\mu(k_1k_2)\mu(\ell)(\ell,k_1k_2\lambda_1\lambda_2)}{\ell^2k_1k_2},
\end{equation}
where $f$ is as given in Lemma \ref{lem:f} and $\regA^*$ is as in Lemma \ref{lem:reduce}. Our aim is to get this into the form
of a restricted divisor sum, so that we may use the work in Section \ref{sec:div}. Before we do this however, we need to introduce some notation.
Define a multiplicative function $h$ by
\begin{equation}\label{def:h}
    h(p^a)= \frac{2\mu(p^a)}{p+1},
\end{equation}
for any prime $p$ and $a \in \NN$. We then define linear forms
\begin{equation} \label{eqn:ell_i}
    \ell_1(a,b)=a, \quad \ell_2(a,b)=b, \quad \ell_3(a,b)=b+a, \quad \ell_4(a,b)=b-a.
\end{equation}
As in Secion \ref{sec:div}, we shall also be interested in the lattice $\Gamma_\dd$, defined for any $\dd \in \NN^4$ by
\begin{equation} \label{eqn:Gammadp4}
    \Gamma_\dd = \left\{\xx \in \ZZ^2: d_i | \ell_i(\xx), (i=1,2,3,4) \right\}.
\end{equation}

\begin{lemma}
 \label{lem:mainterm}
    We have
    $$  N_U(B)=\frac{8B}{\zeta(2)}\sum_{\substack{\ee \in \NN^5 \\ v \in \NN}}h(er)\mu(v)
        \sum_{\substack{r,s|2 \\ (er,s)=e_0}}\mu(r)\mu(s)F(\ee,r,s,v,B)
        + O\left(\frac{B (\log B)^5 }{(\log \log B)^{1/3}}\right),$$
    where we write $\ee=(e_0,e_1,e_2,e_3,e_4)$ and $e=e_0e_1e_2e_3e_4$. Here
    $$F(\ee,r,s,v,B)=\sum_{\substack{(a,b) \in \Gamma_\mm \cap \regA}}
        \frac{f(b/a)}{b^2}\sum_{\substack{i \in \{1,2\} \\ e_id_i | \ell_i(a,b)  }}
        \sum_{\substack{j \in \{3,4\} \\rse_jd_j| \ell_j(a,b) \\ { (\ref{div:restriction2})} \text{ holds}}}1,$$
    where we let
    $$\mm=([e_1,v],[e_2,v],rse_3,rse_4),$$
    and the sum is subject to the condition
    \begin{equation}
    \frac{b^2K^2}{B} \leq \frac{e_1e_2d_1d_2}{r^2se_3e_4d_3d_4} \leq
    \frac{B}{b^2K^2}. \label{div:restriction2}
    \end{equation}
\end{lemma}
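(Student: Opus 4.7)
The plan is to rewrite the main term (\ref{eq:mainterm}) into the claimed form by three successive manipulations: decomposing the divisibility conditions on each linear form, removing the coprimality $(a,b)=1$ via M\"obius inversion, and collapsing the inner sums over $\ell,k_1,k_2$ to produce the factor $h(er)/\zeta(2)$.

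For the decomposition, $(a,b)=1$ implies $\gcd(ab,(b+a)(b-a))=1$ and $\gcd(b+a,b-a)\mid 2$. The divisor $\lambda_2k_2\mid ab$ therefore factors uniquely as $(d_1 e_1)(d_2 e_2)$ with $d_1 e_1\mid a$ and $d_2 e_2\mid b$, where $e_i$ carries the M\"obius-bearing squarefree part coming from $k_2$ and $d_i$ the residual $\lambda$-part. The decomposition of $\lambda_1 k_1\mid(b+a)(b-a)$ is analogous, but complicated at $p=2$: to account for the possibility $\gcd(b+a,b-a)=2$, I would introduce parameters $r,s\mid 2$ and write $\lambda_1 k_1 = rs\cdot(d_3 e_3)(d_4 e_4)$ with $d_3 e_3\mid b+a$ and $d_4 e_4\mid b-a$, and include signs $\mu(r)\mu(s)$ to implement inclusion--exclusion on the shared $2$-part. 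The overlap relation $(er,s)=e_0$ records the common $2$-factor, and under these substitutions (\ref{div:restriction1}) transforms cleanly into (\ref{div:restriction2}).

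Removing the coprimality via $\sum_{(a,b)\in\regA^*}=\sum_v\mu(v)\sum_{v\mid a,\,v\mid b,\,(a,b)\in\regA}$ combines with the divisibilities $e_1\mid a$ and $e_2\mid b$ to produce $[e_1,v]\mid a$ and $[e_2,v]\mid b$, giving precisely the lattice $\Gamma_\mm$ with $\mm=([e_1,v],[e_2,v],rse_3,rse_4)$. For the inner summation, the identity
$$\sum_{\ell\geq 1}\frac{\mu(\ell)(\ell,n)}{\ell^2}=\frac{1}{\zeta(2)}\prod_{p\mid n}\frac{p}{p+1},$$
proved using $(\ell,n)=\sum_{d\mid(\ell,n)}\varphi(d)$ and a standard Euler-product calculation, handles the $\ell$-sum. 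After substituting this in and summing over squarefree $k_1,k_2$ with weights $\mu(k_1k_2)/(k_1k_2)$, a prime-by-prime matching to the $h$-weighted $d$-sum inside $F(\ee,r,s,v,B)$ reduces to the elementary identity $2(1-1/p)+(\alpha-1)(1-1/p)^2 = (1-1/p^2)(1+\alpha(p-1)/(p+1))$ at each prime $p$ with $\alpha=\nu_p(\ell_i(a,b))$. The left-hand side is exactly what the original $(k,\lambda,\ell)$-sum produces at $p$, while the right-hand side is the contribution of $\zeta(2)^{-1}$ together with the $h(e)$-weighted $d$-count; the factor of $2$ in $h(p)=-2/(p+1)$ is forced by this matching.

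The main obstacle is the bookkeeping at $p=2$. Since $\gcd(b+a,b-a)=2$ whenever both $a,b$ are odd, divisors of $(b+a)(b-a)$ at $p=2$ admit several inequivalent decompositions as products of divisors of $b+a$ and $b-a$; the parameters $r,s\in\{1,2\}$ with signs $\mu(r)\mu(s)$ and the overlap constraint $(er,s)=e_0$ are engineered to record this ambiguity through inclusion--exclusion. Verifying that these combine correctly with the local $2$-adic factor of $h$ and match the stated formula exactly is the most delicate step; the remaining manipulations are routine but require careful tracking of the several variables.
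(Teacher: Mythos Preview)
Your plan has the right ingredients --- the $\ell$-sum identity, M\"obius inversion to remove $(a,b)=1$, and a decomposition of divisors of $ab$ and $b^2-a^2$ into pieces attached to the four linear forms --- but the order in which you execute them creates a genuine gap. You split $\lambda_i k_i$ first and then attempt a prime-by-prime matching to produce $h(er)$. This cannot work directly: the restriction~(\ref{div:restriction1}) is carried by the indicator $I(d_1/d_2;B/(b^2K^2))$, which is not multiplicative, so $\Theta(a,b)$ does not factor over primes and no local identity alone can establish the transformation. More concretely, after your split the factor $1/\varphi^{\dag}(k_1k_2\lambda_1\lambda_2)$ from the $\ell$-sum depends on \emph{all} of $d_1,e_1,\dots,d_4,e_4$, not just on $e$, so the weight on the $(e_i)$ is not $h(er)$; it is $\mu(e_1\cdots e_4)/(e_1\cdots e_4)$ times something that still couples to the $d_i$.

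The paper avoids this by reversing your order. First it collapses the $\ell$-sum and the inner $(k_1,k_2)$-sum \emph{for each fixed} $d_1=\lambda_2k_2$, $d_2=\lambda_1k_1$, obtaining the pointwise identity
\[
\frac{1}{\varphi^{\dag}(d_1d_2)}\sum_{k_i\mid d_i}\frac{\mu(k_1k_2)}{k_1k_2}
=\prod_{p\mid d_1d_2}\frac{p-1}{p+1}
=\sum_{e\mid d_1d_2}h(e).
\]
This holds for every pair $(d_1,d_2)$, so the indicator $I(\cdot)$ is simply carried along; no global multiplicativity is needed. Only then is the generic splitting identity
\[
\sum_{d\mid n_1n_2}g(d)=\sum_{k\mid (n_1,n_2)}\mu(k)\sum_{kd_i\mid n_i}g(kd_1d_2)
\]
applied twice --- once to break $d_1\mid ab$, $d_2\mid b^2-a^2$ into divisors of the four $\ell_i$, introducing the variable $s\mid 2$, and once to break $e\mid d_1d_2d_3d_4$ into the five pieces $e_0,\dots,e_4$, introducing $r\mid 2$ and the constraint $(er,s)=e_0$. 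After the change of variables $d_i\mapsto e_id_i$, $d_j\mapsto re_jd_j$, the $\ee$-sum can be moved outside, and finally the coprimality $(a,b)=1$ is removed via M\"obius on $v$. Your local identity $2(1-1/p)+(\alpha-1)(1-1/p)^2=(1-1/p^2)(1+\alpha(p-1)/(p+1))$ is correct and is essentially the Euler factor of the first displayed identity above, but it does not substitute for the pointwise argument that the paper uses to handle the non-multiplicative restriction.
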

\begin{proof}
    We first simplify (\ref{eq:mainterm}) by performing the summation over $\ell$.
    This is achieved by noting that
    \begin{align*}
        \sum_{\ell=1}^\infty\frac{\mu(\ell)(\ell,k_1k_2\lambda_1\lambda_2)}{\ell^2}
        &= \prod_p \left(1 - \frac{(p,k_1k_2\lambda_1\lambda_2)}{p^2}\right)
        = \frac{1}{\zeta(2)\varphi^{\dag}(k_1k_2\lambda_1\lambda_2)},
    \end{align*}
    where $$\varphi^{\dag}(n)=\prod_{p|n}\left(1 + \frac{1}{p}\right).$$
    By (\ref{f:upperbound}) the contribution from $\ell \geq B$ is
    \begin{align*}
        &\ll_\varepsilon B^{1+\varepsilon}\sum_{\substack{(a,b) \in \regA^* \\ \ell \geq B}}\frac{1}{b^2}
        \sum_{\substack{\lambda_1k_1 |(b^2-a^2)\\ \lambda_2k_2|ab }}
        \frac{(\ell,k_1k_2\lambda_1\lambda_2)}{\ell^2k_1k_2}
        \ll_\varepsilon B^{\varepsilon}\sum_{\substack{a<b\leq B}}\frac{1}{b^2}
        \ll_\varepsilon B^{\varepsilon}(\log B),
    \end{align*}
    for any $\varepsilon >0$.
    So on referring to (\ref{eq:mainterm}), we see that we may write
    $$N_U(B)=\frac{8B}{\zeta(2)}\sum_{(a,b) \in \regA^*} \frac{f(b/a)\Theta(a,b)}{b^2}+
    O\left(\frac{B (\log B)^5 }{(\log \log B)^{1/3}}\right),$$
    where
    $$\Theta(a,b)= \sum_{\substack{\lambda_1k_1 |(b^2-a^2)\\ \lambda_2k_2|ab \\ { (\ref{div:restriction1})} \text{ holds}}}
        \frac{\mu(k_1k_2)}{\varphi^{\dag}(k_1k_2\lambda_1\lambda_2)k_1k_2}.$$
    If $I(d;X)$ denotes the characteristic function of the set
    $\left\{ d \in \RR_{>0}: 1/X \leq d \leq X\right\},$ then we have
    \begin{align*}
        \Theta(a,b)&=\sum_{\substack{d_1 |ab\\ d_2| (b^2-a^2)}} \frac{I\left(\frac{d_1}{d_2};\frac{B}{b^2K^2}\right)} {
        \varphi^{\dag}(d_1d_2)}\sum_{k_i|d_i}\frac{\mu(k_1k_2)}{k_1k_2} \\
        &=\sum_{\substack{d_1 |ab\\ d_2| (b^2-a^2)}} I\left(\frac{d_1}{d_2};\frac{B}{b^2K^2}\right)
        \sum_{e|d_1d_2}h(e),
    \end{align*}
    where $h$ is given by (\ref{def:h}).
    Also note that for any arithmetic function $g$ we have
    \begin{equation}
        \sum_{d|n_1n_2}g(d) \label{eqn:g}
        =\sum_{\substack{k|n_1,n_2}}\mu(k)\sum_{\substack{kd_i | n_i}}g(kd_1d_2).
    \end{equation}
    Using this we find that
    \begin{align*}
        \Theta(a,b) &=\sum_{\substack{d_i|\ell_i(a,b)}}
        \sum_{\substack{sd_3 | b+a \\ sd_4 | b-a}}\mu(s)\sum_{e|sd_1d_2d_3d_4}h(e)
        I\left(\frac{d_1d_2}{sd_3d_4};\frac{B}{b^2K^2}\right),
    \end{align*}
    where the $\ell_i$ are given by (\ref{eqn:ell_i}).
    Using (\ref{eqn:g}) again we have
    \begin{align*}
        \Theta(a,b) &=\sum_{\substack{e \in \NN \\d_i|\ell_i(a,b)}}\sum_{\substack{sd_3 | b+a \\ sd_4 | b-a}}\mu(s)
        \sum_{\substack{e=e_0e_1e_2e'  \\ e_1|d_1, e_2|d_2 \\ e'|d_3d_4 \\ (e,s)=e_0}}h(e)
        I\left(\frac{d_1d_2}{sd_3d_4};\frac{B}{b^2K^2}\right)\\
        &=\sum_{\substack{e \in \NN \\d_i|\ell_i(a,b)}}\sum_{\substack{sd_3 | b+a \\ sd_4 | b-a}}\mu(s)
        \sum_{\substack{e=e_0e_1e_2e_3e_4 \\ e_1|d_1, e_2|d_2 \\ re_3|d_3, re_4|d_4 \\ (er,s)=e_0 }}
        h(er) \mu(r)
        I\left(\frac{d_1d_2}{sd_3d_4};\frac{B}{b^2K^2}\right).
    \end{align*}
    We now make the change of variables
    $$ d_1 \mapsto e_1d_1, \quad d_2 \mapsto e_2d_2, \quad d_3 \mapsto re_3d_3, \quad d_4 \mapsto re_4d_4,$$
    which allows us to move the summation over $e,r,s$ to the outside, as in the statement of the lemma. Note
    that $r,s | 2$ since $(a,b)=1$ implies that $(b+a,b-a) | 2$. The proof of the lemma is then complete on removing
    the coprimality condition on $a$ and $b$.
\end{proof}
The main term of $N_U(B)$ is now written so that it visibly involves a restricted divisor sum, which we may handle using Corollary \ref{cor:resdivisor}.

\begin{lemma} \label{lem:mainterm2}
    We have
    $$N_U(B)=\alpha(\tS) \tau_\infty B(\log B)^5\prod_p\left(1-\frac{1}{p}\right)^5\left(1+\frac{1}{p}\right)\sigma_p  \left(1 + o(1)\right),$$
    where for every prime $p$ we let
    \begin{align*}
        \sigma_p=
        \sum_{\substack{\eepsilon \in \{0,1\}^5 \\ 0\leq \nu \leq 1 }}
        \sum_{\substack{0 \leq \varrho,\sigma \leq \nu_2(p) \\ 0 \leq \epsilon - \epsilon_0 + \varrho + \sigma \leq 1\\
        0 \leq \epsilon_0 \leq \sigma}}
        \sum_{\substack{\kk \in \ZZ^4_{\geq0}}}
        \frac{(-1)^{\nu+\varrho+\sigma}h(p^{\epsilon + \varrho})}
        {\rho_0(p^{\max\{\nu,N_1\}},p^{\max\{\nu,N_2\}},p^{N_3},p^{N_4})},
    \end{align*}
    where we write $\eepsilon=(\epsilon_0,\epsilon_1,\epsilon_2,\epsilon_3,\epsilon_4)$ and
    $\epsilon=\sum_{i=0}^4\epsilon_i$.
    Here $\nu_2$ denotes the $2$-adic valuation, $h$ is given by (\ref{def:h}) and $\rho_0(\dd)=\det \Gamma_\dd,$
    where $\Gamma_\dd$ is given by (\ref{eqn:Gammadp4}). Also
    $$\begin{array}{ll}
        N_i = \epsilon_i + k_i, &i \in \{1,2\}, \\
        N_j = \varrho + \sigma + \epsilon_j + k_j,  &j \in \{3,4\},
    \end{array}$$
    and $\alpha(\tS)$ and $\tau_\infty$ are the factors appearing in the leading constant of Manin's conjecture as described in Section \ref{sec:constant}.
\end{lemma}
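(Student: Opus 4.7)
The plan is to apply Corollary \ref{cor:resdivisor} to the inner sum $F(\ee,r,s,v,B)$ in Lemma \ref{lem:mainterm}, and then to reshuffle the outer sum over $(\ee,r,s,v)$ into an Euler product. For fixed $(\ee,r,s,v)$ the lattice is $\Gamma_\mm$, the four linear forms are $\ell_1,\ldots,\ell_4$ as in (\ref{eqn:ell_i}) (rescaled so as to take integer values on $\Gamma_\mm$), and the divisor restriction (\ref{div:restriction2}) translates into a log-linear constraint cutting out an admissible set $V' \subset [0,1]^5$.

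The one discrepancy with the setup of Corollary \ref{cor:resdivisor} is the extra weight $f(b/a)$. Since $f(\theta)$ is by definition the area of an explicit planar region depending on $\theta$, I would write $f(b/a) = \int_{\RR_{>0}^2} \chi(y_1,y_2;b/a)\,\intd y_1 \intd y_2$ and exchange orders of summation and integration; for each fixed $(y_1,y_2)$ the extra constraints $y_1^2 \leq b/a$ and $y_2^2(b^2-a^2) \leq b^2$ are log-linear in the relevant coordinates, so they can be folded into $V'$. The corollary then produces an asymptotic of shape $c(\ee,r,s,v)(\log B)^5 + O((\log B)^4 \log\log B)$ for the inner sum, and the error term is comfortably absorbed after the outer summation since the parameters $\ee,r,s,v$ live in a range of size at most $B^\varepsilon$.

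The constant $c(\ee,r,s,v)$ factorises as $(\textrm{archimedean integral})\cdot\prod_p C_p / \det \Gamma_\mm$. On the archimedean side, the five-dimensional logarithmic volume of the slab cut out by (\ref{div:restriction2}) works out to $\alpha(\tS) = 1/720$, and the remaining three-dimensional integral of $f(b/a)/b^2$ over $\regA$, after the change of variables $(a/b,b/\sqrt B,y_2/\sqrt B) \mapsto (x_0,x_1,x_3)$, is exactly the expression for $\tau_\infty$ obtained from the Leray form in Section \ref{sec:constant}. On the $p$-adic side, the multiplicativity of $h$, $\mu$, and of $\rho_0 = \det \Gamma_\dd$ (via Lemma \ref{lem:rho}) allows me to interchange the outer sum over $(\ee,r,s,v)$ with the Euler product $\prod_p C_p$; after substituting $e_i = p^{\epsilon_i}$, $v = p^\nu$, $r = p^\varrho$, $s = p^\sigma$ and relabelling the $d_i$-exponents as $k_i$, the $p$-local factor collapses to the stated expression for $\sigma_p$, and the outer prefactor $(1-1/p)^5(1+1/p)$ combines the $1/\zeta(2)$ of Lemma \ref{lem:mainterm}, the $(1-1/p)^4$ from the definition of $C_p$, and an extra $(1-1/p)$ from summing over $v$.

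The main obstacle will be the archimedean identification: verifying that the five-dimensional logarithmic measure of the slab (\ref{div:restriction2}) equals $\alpha(\tS)$ after the $\sqrt B$-rescaling, and that unpacking the definition of $f$ reconstructs the Leray form of Section \ref{sec:constant} without extraneous factors. The $p$-adic bookkeeping is mechanical but intricate, the only genuine subtlety being at the prime $p=2$, where the conditions $r, s \mid 2$ are non-trivial and must be reconciled with $(er,s) = e_0$; tracking these carefully is what produces the double sum over $0 \leq \varrho,\sigma \leq \nu_2(p)$ in the statement of $\sigma_p$.
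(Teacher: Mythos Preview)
Your overall architecture matches the paper's: write $f(b/a)$ as a planar integral, apply Corollary~\ref{cor:resdivisor} to $F(\ee,r,s,v,B)$ for each fixed $(\ee,r,s,v)$, and then fold the outer sum into an Euler product. The archimedean and $p$-adic identifications proceed essentially as you describe (though the constraints $y_1^2\le b/a$, $y_2^2(b^2-a^2)\le b^2$ are absorbed into the \emph{region} $\regR=\regA(\yy)/X$, not into $V'$; and your bookkeeping of the factor $(1-1/p)$ ``from summing over $v$'' is not quite right, since the $\nu$-sum lives inside $\sigma_p$ and the displayed prefactor is $(1-1/p)^4\cdot(1-1/p^2)$ coming from $C_p$ and $1/\zeta(2)$).

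The genuine gap is your treatment of the error. You assert that the $O((\log B)^4\log\log B)$ from Corollary~\ref{cor:resdivisor} is ``comfortably absorbed after the outer summation since the parameters $\ee,r,s,v$ live in a range of size at most $B^\varepsilon$''. This is false on two counts. First, the outer sum in Lemma~\ref{lem:mainterm} is over all $\ee\in\NN^5$ and $v\in\NN$, i.e.\ it is infinite; convergence comes only from the decay $h(e)\ll 1/e$. Second, and more seriously, the implied constant in Corollary~\ref{cor:resdivisor} depends on the lattice $\Lambda=\Gamma_\mm$, hence on $\ee$ and $v$, so you cannot simply sum the error over the outer variables. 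The paper deals with this by a dominated convergence argument: one sets
\[
\mathcal{E}(\ee,r,s,v;B)=\frac{F(\ee,r,s,v,B)}{(\log B)^5}-\frac{\alpha(\tS)\tau_\infty\prod_pC_p(\mm)}{8\det\Gamma_\mm},
\]
knows $\mathcal{E}\to 0$ for each fixed tuple, and then must exhibit a summable dominating function. This requires the uniform bound $\mathcal{E}(\ee,r,s,v;B)\ll_\varepsilon e^{-\varepsilon}v^{-1-\varepsilon}$, which in turn rests on the technical divisor estimate of Lemma~\ref{plem:technical} (applied after unfolding $f$ as an integral) together with the bounds $\prod_p|C_p(\mm)|\ll(ev)^\varepsilon$ from Theorem~\ref{thm:resdivisor} and $\det\Gamma_\mm\gg[e,v^2]$ from Lemma~\ref{lem:rho}. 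Without this step the argument does not close.
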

\begin{proof}
    We begin by letting
    $$\regA(\yy) = \left\{ (a,b) \in \regA : \frac{ay_1^2}{b} \leq 1, \left(1- \frac{a^2}{b^2}\right)y_2^2 \leq 1\right\}.$$
    Then, recalling the definition of $f$ given in Lemma \ref{lem:f}, and using the same notation as Lemma \ref{lem:mainterm}, we see that we have
    $$
        F(\ee,r,s,v,B)= \int_{\substack{y_1y_2 \leq 1,\\0 \leq y_i \leq \log \log B}}
        \sum_{\substack{(a,b) \in \Gamma_\mm\cap \regA(\yy)}}
        \frac{1}{b^2}\sum_{\substack{i \in \{1,2\} \\ e_id_i | \ell_i(a,b)  }} \sum_{\substack{j \in \{3,4\} \\rse_jd_j| \ell_j(a,b)
         \\ { (\ref{div:restriction2})} \text{ holds}}}d \yy.
    $$
    We can now apply Corollary \ref{cor:resdivisor} where we take $X = \sqrt{B},\regR=\regA(\yy)/X, \Lambda=\Gamma_\mm$ and
    $V'=V'(e,r,s)$ to be the set corresponding to (\ref{div:restriction2}). This gives
    $$ F(\ee,r,s,v,B)=\frac{\prod_p C_p(\mm)(\log B)^5}{2^4\det \Gamma_\mm}
    \int_{\substack{y_1y_2 \leq 1,\\ 0 \leq y_i \leq \log \log B}}C'_\infty \mathrm d \yy + O_{\ee,r,s,v}((\log B)^{4+\varepsilon}),
    $$
    where
    $$  C'_\infty = \vol \regR  \int_{\substack{u \in [0,1] \\ \eeta \in [1,u]^4}} \chi_{V'}(\eeta,u) \mathrm d \eeta \mathrm d u, \quad
        C_p(\mm)=\left(1-\frac{1}{p}\right)^4\sum_{\kk \in \ZZ^4_{\geq0}}
        \frac{\det \Gamma_\mm}{\det \Gamma_\mm(p^{k_1},p^{k_2},p^{k_3},p^{k_4})}.
    $$
    We begin by simplifying the non-archimedean factor. Here, unraveling definitions we find that
    $$\Gamma_\mm(\dd)= \left\{\xx \in \Gamma_\mm:
        \begin{array}{ll}
           d_i | \ell_i(\xx)/e_i, & i=1,2,\\
           d_j |\ell_j(\xx)/rse_j, & j=3,4.
        \end{array}\right\}.$$
    Recalling the definition of $\mm$ in Lemma \ref{lem:mainterm} and noticing that
    $[[e,v],ed]=[v,ed]$ for any $e,v,d \in \NN$, we see that
    $$
        \Gamma_\mm(\dd)=
        \left\{\xx \in \ZZ^2:
        \begin{array}{ll}
           [v,e_id_i] | \ell_i(\xx), & i=1,2,\\
           rse_jd_j |\ell_j(\xx), & j=3,4.
        \end{array}\right\}.$$
    Thus we deduce that $C_p(\mm)(1-1/p^2)=\sigma_p$, on taking the Euler product
    of the sum over $\ee,r,s,v$ in Lemma \ref{lem:mainterm}.
    For the archimedean factor we have
    \begin{equation} \label{eqn:yintegral}
        \int_{\substack{y_1y_2 \leq 1\\ 0 \leq y_1,y_2 \leq \log \log B}}\vol{\regR}\mathrm d \yy
        =\int_{\substack{y_1y_2 \leq 1 \\ y_1,y_2 \geq 0}}
        \int_{\substack{0<a<b<1 \\ ay_1^2\leq b \leq a\log \log B \\
        \left(1- a^2/b^2\right)y_2^2 \leq 1 \\ b \leq (b-a)\log \log B}} \intd a \intd b \mathrm d \yy. \\
    \end{equation}
    Performing the integration over $\yy$, we see that the contribution from $b \geq a\log \log B$ is bounded above by
    \begin{align*}
    \int_{\substack{0<a<b<1 \\ a\log \log B \leq b }}\frac{b^{3/2}}{(a(b^2-a^2))^{1/2}} \intd a \intd b
    \ll \frac{1}{(\log \log B)^{1/2}}.
    \end{align*}
    While the contribution from $b \geq (b-a)\log \log B$ is handled in a similar manner. Hence making
    the change variables $y_0=a/b$ and evaluating the integral over $b$, we see that (\ref{eqn:yintegral})
    is equal to
    \begin{align*}
        &\frac{1}{2}\int_{\substack{ y_1y_2 \leq 1 \\ y_1,y_2 \geq 0}}
        \int_{\substack{0<y_0<1 \\ y_0y_1^2 \leq 1 \\\left(1- y_0^2\right)y_2^2 \leq 1}} \mathrm d \yy
        + O\left(\frac{1}{(\log \log B)^{1/2}}\right).\\
    \end{align*}
    We now use the change of variables 
    $$x_0=y_0^2y_1y_2, \quad x_1=y_1y_2, \quad x_3=y_0y_1^2,$$
    to see that (\ref{eqn:yintegral}) equals
    \begin{align*}
        &\frac{1}{2}\int_{\{\xx \in \RR^3 : 0< x_0/x_1, x_1, x_3,x_0x_1(x_1-x_0)^2/x_3^2 \leq 1 \}} \frac{\intd x_0 \intd x_1 \intd x_3}{4(x_0x_1)^{1/2}x_3}
        = \frac{\tau_\infty}{32}.\\
    \end{align*}
    For the alpha constant, note that we have
    \begin{align*}
        \int_{\substack{u \in [0,1] \\ \eeta \in [1,u]^4}} \chi_{V'}(\eeta,u) \mathrm d \eeta \mathrm d u
        &= \int_{\substack{u \in [0,1] \\ \eeta \in [1,u]^4}} \chi_{V''}(\eeta,u) \mathrm d \eeta \mathrm d u
        +O_{\ee,r,s}\left(\frac{1}{\log \log B}\right),
    \end{align*}
    where now
    $$V''=\{(\eeta,u) \in [0,1]^5:2u -2 \leq \eta_1 + \eta_2 - \eta_3 - \eta_4 \leq 2 - 2u\}.$$
    We are thus lead to calculate the volume of some rational polytope. One can use \cite{Fra09},
    for example, to find that
    $$\int_{\substack{u \in [0,1] \\ \eeta \in [1,u]^4}}\chi_{V''}(\eeta,u) \mathrm d \eeta \mathrm d u = \frac{4}{45}=64\alpha(\tS).$$
    It thus remains to show that we may control our non-uniform error
    when we sum over $e,r,s$ and $v$. To do this, we use an argument based on
    the dominated convergence theorem, reminiscent of Heath-Brown \cite[Lem. 6.1]{HB03}. Let
    $$\mathcal{E}(\ee,r,s,v;B)= \frac{F(\ee,r,s,v,B)}{(\log B)^5} - \frac{\alpha(\tS) \tau_\infty  \prod_p C_p(\mm)}{8\det \Gamma_\mm}.$$
    For fixed $\ee,r,s$ and $v$ we have shown that $\mathcal{E}(\ee,r,s,v;B) \to 0$ as $B \to \infty$. So in order
    to finish the proof the lemma, we need to show the dominated convergence of the sum
    \begin{equation}
        \sum_{\substack{\ee \in \NN^5\\ v \in \NN}}
        \sum_{\substack{r,s|2 \\ (er,s)=e_0 }}
        |h(er)\mu(r)\mu(s)\mu(v)\mathcal{E}(\ee,r,s,v;B)|, \label{eqn:dominatedconvergence}
    \end{equation}
    where as before we write $e=e_0e_1e_2e_3e_4$.
    I claim that it is sufficient to give the upper bound
    $$ \mathcal{E}(\ee,r,s,v;B) \ll_\varepsilon \frac{1}{e^{\varepsilon}v^{1+\varepsilon}}, $$
    for any $\varepsilon >0$. Indeed, in this case (\ref{eqn:dominatedconvergence}) is bounded above by
    \begin{align*}
        \sum_{\substack{\ee \in \NN^5\\ v \in \NN}}\frac{|h(e)\mu(v)|}{e^{\varepsilon}v^{1+\varepsilon}}
        \ll_\varepsilon \sum_{\substack{\ee \in \NN^5\\ v \in \NN}}\frac{1}{e^{1+\varepsilon}v^{1+\varepsilon}} \ll_\varepsilon 1,
    \end{align*}
    since we have $h(e)\ll 1/e$ by definition. We note that we have
    $$\mathcal{E}(\ee,r,s,v;B) \ll
    \frac{1}{(\log B)^{5}}\sum_{\substack{(a,b) \in \Gamma_\mm \\ a < b \leq \sqrt{B}}}
        \frac{f(b/a)\eth_4(ab(b^2-a^2))}{b^2}
        + \frac{\prod_p |C_p(\mm)|}{\det \Gamma_\mm},$$
    where $\eth_4$ is given by (\ref{def:eth}). The upper bound $\prod_p |C_p(\mm)| \ll (ev)^{\varepsilon}$
    follows from Theorem \ref{thm:resdivisor}. By Lemma \ref{lem:rho}, we know that $\det \Gamma_\mm\gg [e,v^2]$,
    since $(e_i,e_j)=1$ for all $i \neq j$ as $e$ is square-free. On the other hand,
    we have
    $$\sum_{\substack{(a,b) \in \Gamma_\mm \\ a < b \leq \sqrt{B}}} \frac{f(b/a)\eth_4(ab(b^2-a^2))}{b^2}
    \ll \int_{y_1,y_2>0}\int_1^{\sqrt{B}} \frac{1}{t^3}
    \sum_{\substack{(a,b) \in \Gamma_\mm \\a <b \leq t  \\ \max\{y_1^2a,(b-a)y_2^2\} \leq b}}\eth_4\left(ab(b^2-a^2)\right)\intd t \intd \yy.
    $$
    Thus the result follows after making the change of variables $a=a'v,b=b'v$, and applying Lemma \ref{plem:technical}
    to deduce that for any $t>1$ we have
    \begin{align*}
        \sum_{\substack{(a,b) \in \Gamma_\mm \\a <b \leq t \\ \max\{y_1^2a,(b-a)y_2^2\} \leq b}}\eth_4\left(ab(b^2-a^2)\right)
        \ll_\varepsilon& \frac{t^2(\log t)^4}{\max\{1,y_1^2\}\max\{1,y_2^2\}||\ee||^{1-\varepsilon}v^{2-\varepsilon}}.
    \end{align*}
\end{proof}

\subsection{The local densities}

To complete the proof of Theorem \ref{thm:dp4asym}, it remains to show that for any prime $p$ we have
\begin{equation}
    \left(1-\frac{1}{p}\right)^5\left(1+\frac{1}{p}\right)\sigma_p = \tau_p, \label{eqn:showpadic}
\end{equation}
where $\tau_p$ is given in Section \ref{sec:constant} and $\sigma_p$ in Lemma \ref{lem:mainterm2}.
In order to do this, we need to have an explicit expression for the function $\rho_0$ defined in Lemma \ref{lem:mainterm2}.

\begin{lemma} \label{lem:rho0}
    Let $p$ be a prime and let $\ee \in \ZZ^4_{\geq0}$. If $p=2$ and $\min\{e_3,e_4\}>\max\{e_1,e_2\}$ then
    $$\rho_0(2^{e_1},2^{e_2},2^{e_3},2^{e_4}) = 2^{e_3 + e_4 -1}.$$
    Otherwise
    $$\rho_0(p^{e_1},p^{e_2},p^{e_3},p^{e_4}) =    p^{e_{\sigma(1)}+e_{\sigma(2)}},$$
    where we have chosen a permutation $\sigma$ such that $e_{\sigma(1)} \geq
    e_{\sigma(2)}\geq e_{\sigma(3)} \geq e_{\sigma(4)}.$
\end{lemma}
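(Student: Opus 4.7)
The plan is to split on the parity of $p$. For odd $p$, I would invoke equation (\ref{eqn:Gamma}) from the proof of Lemma \ref{lem:rho}: the pairwise resultants of the forms $\ell_1,\ldots,\ell_4$ in (\ref{eqn:ell_i}) are all $\pm 1$ except for $\mathrm{Res}(\ell_3,\ell_4)=\pm 2$, so $\Delta$ is supported only at $p=2$. Hence no odd prime divides $\Delta$, and (\ref{eqn:Gamma}) gives $\rho_0(p^{e_1},p^{e_2},p^{e_3},p^{e_4})=p^{e_{\sigma(1)}+e_{\sigma(2)}}$ directly.

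For $p=2$, I would argue by direct computation in the two cases separately. In the exceptional case $\min(e_3,e_4)>\max(e_1,e_2)$, assume WLOG $e_3\leq e_4$; then $e_3\geq 1$, and the $\ZZ$-linear map
\[ (u,v)\longmapsto\bigl(2^{e_3-1}u-2^{e_4-1}v,\;2^{e_3-1}u+2^{e_4-1}v\bigr) \]
is a bijection of $\ZZ^2$ onto the lattice $\{(a,b):2^{e_3}\mid a+b,\;2^{e_4}\mid b-a\}$, as one sees by solving $u=(a+b)/2^{e_3}$, $v=(b-a)/2^{e_4}$. Its Jacobian determinant is $2^{e_3+e_4-1}$, giving the claimed index. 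The remaining conditions $2^{e_1}\mid a$ and $2^{e_2}\mid b$ are automatic on the image, since both coordinates are divisible by $2^{e_3-1}\geq 2^{\max(e_1,e_2)}$.

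For the other case $\min(e_3,e_4)\leq\max(e_1,e_2)$, I would use the symmetries $a\leftrightarrow -a$ (which swaps $e_3$ and $e_4$) and $(a,b)\leftrightarrow(b,a)$ (which swaps $e_1$ and $e_2$), both preserving $\rho_0$, to reduce to $e_1\geq e_2$ and $e_3\leq e_4$, so that $e_3\leq e_1$. Then $2^{e_1}\mid a$ forces $2^{e_3}\mid a$, so the constraint $2^{e_3}\mid a+b$ simplifies to $2^{e_3}\mid b$. I split into two subcases. If $e_4\leq e_1$, the same simplification applies to $\ell_4$ and the lattice collapses to $\{2^{e_1}\mid a,\;2^{\max(e_2,e_3,e_4)}\mid b\}$ with determinant $2^{e_1+\max(e_2,e_3,e_4)}=2^{e_{\sigma(1)}+e_{\sigma(2)}}$. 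If instead $e_4>e_1$, writing $a=2^{e_1}a'$ and $b=2^{\max(e_2,e_3)}b'$ turns $2^{e_4}\mid b-a$ into a single linear congruence modulo $2^{e_4-\max(e_2,e_3)}$ in $(a',b')$; the associated sublattice of $\ZZ^2_{(a',b')}$ has determinant $2^{e_4-\max(e_2,e_3)}$, which when multiplied by the Jacobian $2^{e_1+\max(e_2,e_3)}$ of the substitution yields $2^{e_1+e_4}=2^{e_{\sigma(1)}+e_{\sigma(2)}}$.

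The main obstacle is really just the bookkeeping of the $p=2$ case analysis: one must carefully track how the divisibility conditions on $a+b$ and $b-a$ are absorbed into, or survive alongside, those on $a$ and $b$ in each regime of exponents. No new ideas beyond equation (\ref{eqn:Gamma}) are required, and each individual subcase is a short computation with elementary divisors.
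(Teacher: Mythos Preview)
Your proof is correct and follows essentially the same strategy as the paper: invoke (\ref{eqn:Gamma}) for odd primes (since $\Delta$ is a power of $2$), and handle $p=2$ by direct lattice computations in the two regimes. The only cosmetic differences are that for the exceptional case you exhibit an explicit change of basis $(u,v)\mapsto(a,b)$ with Jacobian $2^{e_3+e_4-1}$, whereas the paper instead proves the equivalence $\{2^{e_3}\mid a{+}b,\ 2^{e_4}\mid b{-}a\}\Leftrightarrow\{2^{e_3}\mid a{+}b,\ 2^{e_4-1}\mid a,\ 2^{e_4-1}\mid b\}$, and for the non-exceptional $p=2$ case you spell out the two subcases explicitly where the paper simply remarks that the method of Lemma~\ref{lem:rho} still applies.
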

\begin{proof}
    By Lemma \ref{lem:rho}, we see that we need only consider the case $p=2$. Moreover
    the same method given there works if $\min\{e_3,e_4\}\leq\max\{e_1,e_2\}$, thus we may assume that
    $\min\{e_3,e_4\}>\max\{e_1,e_2\}$.
    When $e_3 \geq e_4$, it is sufficient to show that $2^{e_3}|(b + a)$ and $2^{e_4}|(b-a)$ if and only if
    $2^{e_3}|(b+a), 2^{e_4-1}|b$ and $2^{e_4-1}|a$. Indeed, this lattice has determinant $2^{e_3+e_4-1}$.

    For the first implication, we have $2^{e_3}|(b + a)$ and $2^{e_4}|(b-a)$ clearly implies that $2^{e_4}|2b$ and $2^{e_4}|2a$,
    as required.
    For the other implication, assume that $2^{e_3}|(b+a), 2^{e_4-1}|b,2^{e_4-1}|a$ and write
    $a=2^{e_4-1}a'$ and $b=2^{e_4-1}b'$. Then $2^{e_3 -e_4 +1}|(b'+a')$, and hence $a'$ and
    $b'$ share the same parity so $2|(b'-a')$. Hence $2^{e_4}|(b-a)$ as required.
    The proof in the case $e_4 \geq e_3$ works in a similar manner.
\end{proof}

Now let $p$ be any prime. In order to show (\ref{eqn:showpadic}), we split the summation
over the $N_i$ (in the notation of Lemma \ref{lem:mainterm2}) into various cases. First, the contribution
from the case where $N_i=0$ for all $i=1,2,3,4$ is
\begin{align*}
    &\sum_{\substack{0\leq \nu \leq 1}}\frac{(-1)^{\nu}}{\rho_0(p^\nu,p^\nu,1,1)}
    = 1 - \frac{1}{p^2}.
\end{align*}
Next, we handle the case where $N_i \geq 1$ for some $i$ and $N_j=0$ for all $i\neq j$.
Note that since $N_3=0$ or $N_4=0$ we must have $\varrho=\sigma=\epsilon_0=0$. So we get
\begin{align*}
    \sum_{\substack{\epsilon+k \geq 1 \\ 0\leq \nu \leq 1}}
    \frac{(-1)^{\nu}h(p^{\epsilon})}{p^{\epsilon+k+\nu}}
    =&\left(1 - \frac{1}{p}\right)\sum_{\substack{\epsilon+k \geq 1}}
    \frac{h(p^{\epsilon})}{p^{\epsilon+k}} \\
    =&\left(1 - \frac{1}{p}\right)\left(\frac{h(p)}{p} +
    \sum_{\substack{k \geq 1 \\ 0\leq \epsilon \leq 1}}\frac{h(p^{\epsilon})}{p^{\epsilon+k}}\right)\\
    =&\left(1 - \frac{1}{p}\right)\frac{h(p)}{p} + \frac{1}{p}\left(1 + \frac{h(p)}{p}\right)\\
    =&\frac{1 + h(p)}{p}
    =\left(1 - \frac{1}{p}\right)\frac{1}{p+1},
\end{align*}
since we have $h(p)=-2/(p+1)$ by definition (\ref{def:h}).
Hence, the total contribution from these cases is
\begin{align*}
    &\left(1 - \frac{1}{p}\right)\left(1 + \frac{1}{p} + \frac{4}{p+1}\right)
    = \frac{(1-1/p)(1 + 6/p + 1/p^2)}{1 + 1/p}.
\end{align*}
Recalling the definition of $\tau_p$ in Section \ref{sec:constant}, in order to prove (\ref{eqn:showpadic})
it suffices to show that if $N_i \geq 1$ and $N_j \geq 1$
for some $i\neq j$, then the sum given in Lemma \ref{lem:mainterm2} vanishes.

If $p\neq 2$, then in this case Lemma \ref{lem:rho0} implies that the function $\rho_0$ is independent of $\nu$, and
changing the order of summation we have $\sum_{0 \leq \nu \leq 1}(-1)^\nu = 0$. This
is simply a reflection of the fact that in the original counting problem,
we were only counting those $a$ and $b$ which were coprime.
For the case $p=2$, a similar argument shows that the sum vanishes if $N_i,N_j\geq 1$ for some $(i,j) \neq (3,4),(4,3)$,
or $N_3,N_4 \geq 2.$
Therefore we need to consider the extra cases given by $N_1=N_2=0,N_3=1, N_4 \geq 1$ and $N_1=N_2=0,N_4=1, N_3 \geq 1$.
For any $N\in \NN$ we have
\begin{align*}
    &\sum_{\substack{0\leq \nu \leq 1 \\ k_3,k_4 \geq 0}}
    \sum_{\substack{0 \leq \epsilon_3 + \epsilon_4 + \varrho + \sigma  \leq 1 \\ 0 \leq \epsilon_0 \leq \sigma \\N_3=1, N_4=N}}
    \frac{(-1)^{\nu+\varrho+\sigma}h(2^{\epsilon_0 + \epsilon_3+\epsilon_4 + \varrho})}{\rho_0(2^\nu,2^\nu,2,2^{N_4})}\\
    &=\frac{1}{2^N}\left(1 - \frac{1}{2}\right)
    \sum_{\substack{k_4 \geq 0}}
    \sum_{\substack{0 \leq \epsilon_3 + \epsilon_4 + \sigma + \varrho\leq 1 \\ 0 \leq \epsilon_0 \leq \sigma,  k_3 \geq 0 \\N_3=1, N_4=N}}
    (-1)^{\varrho+\sigma}h(2^{\epsilon_0 + \epsilon_3+\epsilon_4 + \varrho}).
\end{align*}
However, this inner sum vanishes.
Indeed, the condition $N_3=1$ implies that only one of $\varrho,\sigma, \epsilon_3$ and $k_3$ may be non-zero.
The contribution from each case is  $-h(2),-1 - h(2), h(2)$ and $1 + h(2)$, respectively.
The obvious symmetry means we that the sum also vanishes for $N_3=N$ and $N_4=1$. Thus we have shown (\ref{eqn:showpadic}),
and combining this with Lemma \ref{lem:mainterm2} completes the proof of Theorem \ref{thm:dp4asym}.



\end{document}